\title{Second order free cumulants: product, commutator, and anti-commutator}
\author{Daniel Munoz George}
\address{Daniel Munoz George: Department of Mathematics, The University of Hong Kong, PokFuLam, Hong Kong}
\email{dmunozgeorge@gmail.com}
\author{Daniel Perales}
\address{Daniel Perales: Department of Mathematics, University of Notre Dame, IN, USA}
\email{dperale2@nd.edu}
\date{\today}
\newtheorem{theorem}{Theorem}[section]
\newtheorem{corollary}[theorem]{Corollary}
\newtheorem{lemma}[theorem]{Lemma}
\newtheorem{proposition}[theorem]{Proposition}
\theoremstyle{definition}
\newtheorem{definition}[theorem]{Definition}
\newtheorem{remark}[theorem]{Remark}
\newtheorem{notation}[theorem]{Notation}
\newtheorem{example}[theorem]{Example}
\newcommand{\gammanm}{\gamma}
\newcommand{\Inm}{I}
\newcommand{\cP}{\mathcal{P}}
\newcommand{\NC}{\mathcal{NC}}
\newcommand{\cU}{\mathcal{U}}
\newcommand{\cV}{\mathcal{V}}
\newcommand{\PS}{\mathcal{PS}}
\newcommand{\ab}{\allowbreak}
\newcommand{\cc}{\mathbb{C}}
\newcommand{\nn}{\mathbb{N}}
\renewcommand{\AA}{\mathcal{A}}
\newcommand{\CC}{\mathcal{C}}
\newcommand{\EE}{\mathcal{E}}
\newcommand{\FF}{\mathcal{F}}
\newcommand{\GG}{\mathcal{G}}
\newcommand{\JJ}{\mathcal{J}}
\newcommand{\KK}{\mathcal{K}}
\newcommand{\NN}{\mathcal{N}}
\newcommand{\OO}{\mathcal{O}}
\newcommand{\PP}{\mathcal{P}}
\newcommand{\XX}{\mathcal{X}}
\newcommand{\NCac}{\XX }
\newcommand{\Ssemicircle}{\mathcal{NC}_2^{spoke}}
\newcommand{\freec}[2]{\kappa_{#1}^{#2}}
\newcommand{\Part}[1]{P_{#1}}
\newcounter{PartitionDepth}
\newcounter{PartitionLength}
\begin{document}

\begin{abstract}
Given two second order free random variables $a$ and $b$, we study the second order free cumulants of their product $ab$, their commutator $ab-ba$, and their anti-commutator $ab+ba$.  Let $(\kappa_n^a)_{n\geq 1}$ and $(\kappa_{n,m}^a)_{n,m\geq 1}$ denote the sequence of free cumulants of first and second order, respectively, of a random variable $a$ in a second order non-commutative probability space $(\mathcal{A},\varphi,\varphi^2)$. Given $a$ and $b$ two second order freely independent random variables, we provide formulas to compute each of the cumulants $(\freec{n,m}{ab})_{n,m\geq 1}$, $(\freec{n,m}{ab-ba})_{n,m\geq 1}$, and $(\freec{n,m}{ab+ba})_{n,m\geq 1}$ in terms of the individual cumulants $(\freec{n}{a})_{n\geq 1}$, $(\freec{n,m}{a})_{n,m\geq 1}$, $(\freec{n}{b})_{n\geq 1}$, and $(\freec{n,m}{b})_{n,m\geq 1}$. For $n=m=1$ our formulas read:
\begin{align*}
\freec{1,1}{ab} &= \freec{2}{a}\freec{2}{b} +\freec{1,1}{a}(\freec{1}{b})^2+\freec{1,1}{b}(\freec{1}{a})^2,\\
\freec{1,1}{ab-ba} &= 2\freec{2}{a}\freec{2}{b},\\
\freec{1,1}{ab+ba} &= 2\freec{2}{a}\freec{2}{b} +4\freec{1,1}{a}(\freec{1}{b})^2+4\freec{1,1}{b}(\freec{1}{a})^2.
\end{align*}

In general, our formulas express the cumulants $\freec{n,m}{ab}$, $\freec{n,m}{ab-ba}$, and $\freec{n,m}{ab+ba}$ as sums indexed by special subsets of non-crossing partitioned permutations. The formulas for the commutator and anti-commutator where not studied before, while the formula for the product was only known in the case the where the individual second order free cumulants vanish. As an application, we compute explicitly the cumulants of the anti-commutator and product of two second order free semicircular variables. 
\end{abstract}

\maketitle

\tableofcontents

\section{Introduction}

$\ $

Free probability is a useful tool to study large random matrices. By now there is a extensive list of results that confirm that independent random matrices tend to free random variables when the size of the matrix is large \cite{voiculescu1991limit, voiculescu1992free, voiculescu1998strengthened}. Second order freeness, initiated in \cite{mingo2007secondPart1} helps to get a more detailed study, by extending the relation between random matrices and free probability theory from the level of expectations to the level of
fluctuations. Since then, different works have shown that some classical ensembles of independent random matrices become second order free when the size of the matrix tends to infinity. In \cite{mingo2007secondPart1}, it was shown that orthogonal families of Gaussian and Wishart random matrices are asymptotically free of second order. In the same direction, it was shown in \cite{mingo2007second} that Haar unitary and independent random matrices with a second order distribution are second order free. Asymptotic second order freeness is not generally satisfied for real ensembles of random matrices, this motivated the introduction of \textit{real second order freeness} in \cite{redelmeier2014}. Later, it was shown in \cite{mingopopa2013} that independent and Haar orthogonal random matrices are asymptotically real second order free.

An important question is how one can understand the distribution of polynomial expression $P(A,B)$ of two independent random matrices $A$ and $B$, in terms of the distributions of $A$ and $B$ itself. Even in the limiting case, that is usually simpler, this is not an easy question. In the limit the problem amounts to study the distribution of $P(a,b)$ in terms of the distributions of $a$ and $b$, two free random variables.
The basic cases of addition $a+b$, and multiplication $ab$, are very well understood. However, other simple polynomials, such as the commutator $i(ab-ba)$, or the anti-commutator $ab+ba$ are considerably harder.

From a combinatorial point of view, the problem reduces to computing the free cumulants of the polynomial expression $\left(\freec{n}{{}_{P(a,b)}}\right)_{n\geq 1}$ in terms of the free cumulants of each of the variables, $(\freec{n}{a})_{n\geq 1}$ and $(\freec{n}{b})_{n\geq 1}$. The free cumulants were introduced by Speicher \cite{speicher1994multiplicative} as functionals that linearize the addition: $\freec{n}{a+b}=\freec{n}{a}+\freec{n}{b}$ for all $n$. Since then, free cumulants have been also used to study the multiplication \cite{nica1996multiplication}, the commutator \cite{nica1998commutators}, and the anti-commutator \cite{perales2021anti}. In the case of the anti-commutator, the formula requires the study of graphs that are associated to each partition.

\begin{definition}[{\cite[Definition 1.1]{perales2021anti}}]
\label{defi.Graph.pi}
Given a set partition $\pi$ of $[2n]:=\{1,2,\dots,2n\}$ one can associate a graph $\GG_\pi$, where the vertices are blocks of $\pi$, and for $k=1,2,\dots,n$ we draw an undirected edge between the block containing element $2k-1$ and the block containing element $2k$. We allow for loops and multiple edges, thus $\GG_\pi$ has exactly $n$ edges.  

Letting $\NN\CC(2n)$ be the set of non-crossing partitions of $[2n]$, we denote
\[
\NCac_{2n}:=\{\pi\in \NN\CC(2n) : \GG_\pi\mbox{ is connected and bipartite}\}.
\]
A partition $\pi \in \NCac_{2n}$ has a natural \emph{bipartite decomposition} $\pi = \pi ' \sqcup \pi ''$.  Denoting by $V_1$ the block  of $\pi$ which contains the number $1$, we have that $\pi '$ consists of the blocks of $\pi$ which are at even distance from $V_1$ in the graph $\mathcal{G}_{\pi}$, while $\pi''$ consists of the blocks of $\pi$ which are at odd distance from $V_1$ in that graph.
\end{definition}

With this notation in hand, \cite[Theorem1.4]{perales2021anti} expresses the cumulants of the anti-commutator $ab+ba$ in terms of the cumulants of each $a$ and $b$:

\begin{equation}
\label{formula.anticommutator.main}
\freec{n}{ab+ba} = \sum_{\substack{\pi\in \NCac_{2n} \\ \pi=\pi'\sqcup\pi''}} \left(  \freec{\pi'}{a}  \freec{\pi''}{b} +  \freec{\pi'}{b}  \freec{\pi''}{a} \right), \qquad \text{for } n\geq 1,
\end{equation}
where for a variable $c$ and partition $\sigma$ we use the notation 
\begin{equation}
\label{eq:product.partition}
\freec{\sigma}{c}:=\prod_{V\in \sigma} \freec{|V|}{c}.
\end{equation}

Understanding the second order distribution of polynomial expressions $P(a,b)$ of two second order free random variables $a$ and $b$ is even more challenging. In the last of a series of papers concluding with \cite{collins2007second}, the authors introduced the concept of the second (and even higher) order cumulants. In the second order framework, the question is how to compute the second order free cumulants of the polynomial $\left(\freec{n,m}{{}_{P(a,b)}}\right)_{n,m\geq 1}$ in terms of the individual free cumulants sequences $(\freec{n}{a})_{n\geq 1}$, $(\freec{n,m}{a})_{n,m\geq 1}$, $(\freec{n}{b})_{n\geq 1}$ and $(\freec{n,m}{b})_{n,m\geq 1}$. Notice that the problem is already more complicated, as one may need to involve the free cumulants of first order. Some of the few results in this direction include a formula for the second order cumulants of products as arguments \cite{mingo2009second}, and a computation of the second order cumulants of even and $R$-diagonal variables \cite{arizmendi2023second}.\\

The goal of the present work, is to generalize \eqref{formula.anticommutator.main} to the second order. Our study parallels the one done in \cite{perales2021anti} for the first order version. Instead of set partitions, the main combinatorial object in second order freeness is the set of non-crossing annular partitioned permutations $PS_{\NC}(n,m):=S_{\NC}(n,m)\cup S_{\NC}^\prime(n,m)$, see Definition \ref{Definition:Non-crossing-pp}. It is important to keep in mind that every permutation $\pi\in S_n$ can be seen as a partition $\Part{\pi}$ by considering the cycles (of its cycle decomposition) as the blocks of the partition.

\begin{definition}
\label{defi.Graph.pi.2}
\label{main.defi.2}
Let $S_{2n}$ and be the set of permutations of $[2n]:=\{1,2,\dots,2n\}$. 
\begin{enumerate}
\item[1.] Given a permutation $\pi\in S_{2n}$, we denote by $G_{\pi}:=G_{\Part{\pi}}$ the graph from Definition \ref{defi.Graph.pi} associated to the partition $\Part{\pi}$, whose blocks are the cycles of $\pi$.

\item[2.] Let $\gammanm:=(1,\dots,2n)(2n+1,\dots,2n+2m)\in S_{2n+2m}$. We denote
\[
\JJ_{2n,2m}:=\{\pi\in S_{\NC}(2n,2m): \GG_\pi\mbox{ is bipartite and }\pi^{-1}\gammanm\mbox{ separates even}\}.
\]
A permutation $\pi \in \JJ_{2n,2m}$ has a \emph{bipartite decomposition} $\pi = \pi ' \sqcup \pi ''$, as in Definition \ref{defi.Graph.pi}.

\item[3.] We denote
\[
\NCac_{2n,2m}:=\{(\cU,\pi)\in S_{\NC}^\prime(2n,2m): \GG_\cU\mbox{ is bipartite and connected}\}.
\]
A partitioned permutation $(\cU,\pi)\in \NCac_{2n,2m}$ has a natural \emph{decomposition} $\pi = \pi ' \sqcup \pi '' \sqcup A\sqcup B$. If $\pi=\pi_1\times \pi_2$ then $A\in\pi_1$ and $B\in\pi_2$ are the only two cycles of $\pi$ that are merged together into a single block $U$ of $\cU$. On the other hand, $\pi'$ consists of the cycles of $\pi\setminus\{A,B\}$ which are at even distance from $U$ in the graph $\GG_\cU$ while $\pi''$ consist of the cycles which are at odd distance from $U$ in the graph $\GG_\cU$.
\end{enumerate}
\end{definition}

As advertised, our main result is to provide a general formula to compute the second order free cumulants of the anti-commutator $ab+ba$ in terms of the first and second order free cumulants of $a$ and $b$.

\begin{theorem}
\label{Thm.anticommutator.main.2}
Consider two second order free random variables $a$ and $b$, and let $(\freec{n}{a})_{n\geq 1}$, $(\freec{n,m}{a})_{n,m\geq 1}$, $(\freec{n}{b})_{n\geq 1}$, $(\freec{n,m}{b})_{n,m\geq 1}$ and $(\freec{n,m}{ab+ba})_{n,m\geq 1}$ be the sequence of first and second order free cumulants of $a$, $b$ and $ab+ba$, respectively. Then, for every $n,m\geq 1$ one has
\begin{align}
\label{formula.anticommutator.main.2.intro}
\freec{n,m}{ab+ba} &= \sum_{\substack{\pi \in \JJ_{2n,2m}\\ \pi=\pi' \sqcup \pi''}}\left( \freec{\pi'}{a} \, \freec{\pi''}{b}  + \freec{\pi'}{b} \, \freec{\pi''}{a}  \right) + \sum_{\substack{(\cU,\pi)\in \NCac_{2n,2m}\\ \pi =\pi'\sqcup \pi'' \sqcup A\sqcup B }} \left(\freec{|A|,|B|}{a}\, \freec{\pi'}{a} \, \freec{\pi''}{b} + \freec{|A|,|B|}{b}\, \freec{\pi'}{b} \, \freec{\pi''}{a}  \right),
\end{align}
where we use the notation \eqref{eq:product.partition}.
\end{theorem}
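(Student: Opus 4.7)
The plan is to imitate the first-order argument of \cite{perales2021anti} in the second-order framework. By multilinearity of the second-order cumulant functional in each of its $n+m$ arguments,
\[
\freec{n,m}{ab+ba} \;=\; \sum_{\epsilon\in\{0,1\}^n,\, \delta\in\{0,1\}^m} \kappa_{n,m}\!\left(X^{\epsilon_1}_1,\ldots,X^{\epsilon_n}_n;\, Y^{\delta_1}_1,\ldots,Y^{\delta_m}_m\right),
\]
where $X^0_i=Y^0_j=ab$ and $X^1_i=Y^1_j=ba$. Each summand is a second-order cumulant whose $n+m$ arguments are products of length two, so one can apply the second-order formula for cumulants with products as arguments from \cite{mingo2009second}. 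This rewrites every summand as a sum, indexed by partitioned permutations $(\cV,\sigma)\in PS_{\NC}(2n,2m)$, of a product of first- and second-order cumulants of the individual letters assigned to the $2(n+m)$ positions.

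Next, invoke the hypothesis that $a$ and $b$ are second-order free: all first- and second-order mixed cumulants vanish, so the contribution of a fixed $(\cV,\sigma)$ survives only when every block of $\cV$ is \emph{monochromatic} under the labeling $(\epsilon,\delta)$. Interchanging the two summations and fixing $(\cV,\sigma)$, the key combinatorial observation is that the consecutive pairs $\{2i-1,2i\}$ arising from the expansion of $(ab+ba)$ are precisely the edges of the graph $\GG_\cV$ of Definition \ref{defi.Graph.pi.2}. The two requirements ``positions paired by an edge receive opposite letters'' and ``positions in the same block of $\cV$ receive the same letter'' together state exactly that a compatible $(\epsilon,\delta)$ is a proper $2$-coloring of $\GG_\cV$. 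Hence the number of compatible labelings equals $2^c$ if $\GG_\cV$ is bipartite with $c$ connected components, and $0$ otherwise; each of the $2^c$ colorings contributes one product of monochromatic cumulants.

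The argument concludes by splitting $PS_{\NC}(2n,2m)=S_{\NC}(2n,2m)\sqcup S_{\NC}'(2n,2m)$ and matching each piece with one of the two summands in \eqref{formula.anticommutator.main.2.intro}. For $(\Part{\pi},\pi)\in S_{\NC}(2n,2m)$, bipartiteness of $\GG_\pi$ together with the condition ``$\pi^{-1}\gammanm$ separates even'' should pick out precisely those $\pi\in\JJ_{2n,2m}$ admitting exactly two valid colorings, which yield the two terms $\freec{\pi'}{a}\freec{\pi''}{b}+\freec{\pi'}{b}\freec{\pi''}{a}$. For $(\cU,\pi)\in S_{\NC}'(2n,2m)$, bipartiteness and connectedness of $\GG_\cU$ again give exactly two colorings, while the unique block $U=A\sqcup B$ merging one cycle of $\pi_1$ with one cycle of $\pi_2$ supplies the second-order cumulant $\freec{|A|,|B|}{a}$ or $\freec{|A|,|B|}{b}$, producing the sum indexed by $\NCac_{2n,2m}$. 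The main technical obstacle will be the $S_{\NC}$ case: verifying that ``$\pi^{-1}\gammanm$ separates even'' is exactly the condition that forces the $2^{c}$-count to collapse to $2$ (i.e.\ rules out parasitic extra colorings coming from the two circles of the annulus) requires a careful analysis of how the Kreweras-type complement $\pi^{-1}\gammanm$ interacts with the parity of positions inherited from the pair structure $\{2i-1,2i\}$, and this combinatorial identification is where the bulk of the proof effort is expected to lie.
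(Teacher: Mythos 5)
Your outline follows essentially the same route as the paper: multilinearity over the $2^{n+m}$ sign choices, the second-order products-as-arguments formula of \cite{mingo2009second}, vanishing of mixed (first- and second-order) cumulants to force monochromatic blocks, and the identification of compatible labelings with proper $2$-colorings of the graph, giving $2^{c}$ contributions for a bipartite graph with $c$ components. The splitting into the $S_{\NC}(2n,2m)$ and $S_{\NC}'(2n,2m)$ pieces, and the treatment of the merged block $A\sqcup B$ as the source of the single second-order cumulant, also match the paper's proof.

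The one substantive step you leave open is exactly the paper's Proposition \ref{Proposition: separates implies connected}: for $\pi\in S_{\NC}(2n,2m)$ with $\pi^{-1}\gammanm$ separating even numbers, the graph $\GG_\pi$ is connected, so that $c=1$ and the coloring count is $2$ rather than $2^{c}$. You correctly flag this as the main obstacle, but be aware of two points. First, the paper closes it not by a direct analysis of colorings but by a length-function/genus argument: assuming $\Part{\pi}\vee I<\boldsymbol{1}$, one restricts $\pi$ to the two putative blocks, compares $\#(\pi)+\#(\pi^{-1}\gamma_A\gamma_B)$ computed two ways (via the annular non-crossing counts on each block, and via writing $\gamma_A\gamma_B$ as $\gammanm$ times transpositions of even elements that $\pi^{-1}\gammanm$ separates), and derives a contradiction. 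Second, your phrasing that ``separates even'' is \emph{exactly} the condition forcing the count to collapse to $2$ is not right: the paper's Lemma \ref{Lemma: characterization of connected} shows connectedness of $\GG_\pi$ is equivalent to the strictly weaker condition that $\pi^{-1}\gammanm$ separates even except possibly for one pair $r\in[2n]$, $s\in[2m]$. The ``separates even'' constraint is imposed externally by the products-as-arguments formula; all the proof needs from it is the one-way implication to connectedness. Until that implication is established, the proof is incomplete at its hardest point.
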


The first two formulas look as follows:
\begin{align*}
\freec{1,1}{ab+ba} &= 2\freec{2}{a}\freec{2}{b} +4\freec{1,1}{a}\freec{1}{b}\freec{1}{b}+4\freec{1,1}{b}\freec{1}{a}\freec{1}{a},\\
\freec{1,2}{ab+ba}=\freec{2,1}{ab+ba} &=\, 4\freec{3}{a}\freec{3}{b} + 12\freec{1}{a}\freec{2}{a}\freec{3}{b}+12\freec{1}{b}\freec{2}{b}\freec{3}{a}
+4\freec{2,1}{a}\freec{2}{b}\freec{1}{b}+4\freec{2,1}{b}\freec{2}{a}\freec{1}{a}\\
&+8\freec{2,1}{a}(\freec{1}{b})^3+8\freec{2,1}{b}(\freec{1}{a})^3
+16\freec{1,1}{a}\freec{1}{a}\freec{2}{b}\freec{1}{b}+16\freec{1,1}{b}\freec{1}{b}\freec{2}{a}\freec{1}{a}.   
\end{align*}

The proof of Theorem \ref{Thm.anticommutator.main.2} can be found in Section \ref{Section: proof}. The approach relies on first performing standard computations using linearity of the cumulants and products as arguments formula, to get a sum indexed by partitions $\pi$, this is the content of Proposition \ref{prop.basic.anticomm.2}. Then, in Proposition \ref{prop.few.nonvanishing.epsilons.2} we use the graph $\GG_\pi$ to detect several permutations $\pi$ that do not really contribute to the sum. Finally, one can rewrite the second sum of Equation \eqref{formula.anticommutator.main.2.intro} as a double sum over $\pi=\pi_1\times\pi_2$ and $(C_2,C_2)\in \pi_1\times \pi_2$ where both $\GG_{\pi_1}$ and $\GG_{\pi_2}$ are connected and bipartite. This allows us to restate the formula in terms of the sets $\NCac_{2n}$ and $\NCac_{2m}$ from Definition \ref{defi.Graph.pi}, this is done in Theorem \ref{Thm.anticommutator.main.2.2}.\\

The ideas used in the proof of Theorem \ref{Thm.anticommutator.main.2} can be adapted to study the cumulants of the free commutator $ab-ba$. The only difference is that now the terms in the sum may have negative signs depending on permutation $\pi$, as we may get the same term with opposite signs, this means that there might be several cancellations. 

\begin{definition}
\label{def:admissible}
Let $I:=(1,2)(2,3)\dots (2n+2m-1,2n+2m)\in S_{2n+2m}$. A permutation $\pi\in\JJ_{2n,2m}$ is said to be admissible if $I\pi$ separates $2i-1$ from $2i$ for all $1\leq i \leq n+m$. In other words, there is no cycle of $I\pi$ containing both $2i-1$ and $2i$. We denote the set of admissible permutations by $\AA_{2n,2m}$.
\end{definition}

\begin{theorem}
\label{Thm.commutator.main.2.intro}
Consider two second order free random variables $a$ and $b$, and let $(\freec{n}{a})_{n\geq 1}$, $(\freec{n,m}{a})_{n,m\geq 1}$, $(\freec{n}{b})_{n\geq 1}$, $(\freec{n,m}{b})_{n,m\geq 1}$ and $(\freec{n,m}{ab-ba})_{n,m\geq 1}$ be the sequence of first and second order free cumulants of $a$, $b$ and $ab-ba$, respectively. Then, for every $n,m\geq 1$ one has
\begin{align*}
\freec{n,m}{ab-ba} =& \sum_{\substack{\pi \in \AA_{2n,2m}\\ \pi=\pi' \sqcup \pi''}}s(\pi) \left(\freec{\pi'}{a} \, \freec{\pi''}{b}  + (-1)^{m+n} \freec{\pi'}{b} \, \freec{\pi''}{a}  \right) \\ & + \sum_{\substack{(\cU,\pi)\in \NCac\EE_{2n,2m}\\ \pi =\pi'\sqcup \pi'' \sqcup A\sqcup B }} \left(\freec{|A|,|B|}{a}\, \freec{\pi'}{a} \, \freec{\pi''}{b} +  \freec{|A|,|B|}{b}\, \freec{\pi'}{b} \, \freec{\pi''}{a}  \right),
\end{align*}
where $\NCac\EE_{2n,2m}$ is the set of permutations $\pi\in\NCac_{2n,2m}$ such that every cycle of $\pi$ has even size. And $s(\pi)=\pm 1$ is a sign depending on $\pi$ that will be precisely defined in \eqref{not:sign.pi} at the beginning of Section \ref{sec:commutator}.
\end{theorem}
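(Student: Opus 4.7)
The plan is to mirror the proof strategy of Theorem~\ref{Thm.anticommutator.main.2}, adapting it for the signs introduced by the minus sign in the commutator. The expansion produces essentially the same indexing sum as for the anti-commutator, but each term now carries a sign that depends on how many factors of $ba$ (as opposed to $ab$) were chosen. The genuinely new feature is that these signs produce cancellations which shrink the indexing sets from $\JJ_{2n,2m}$ and $\NCac_{2n,2m}$ down to the admissible subset $\AA_{2n,2m}$ and the even-cycled subset $\NCac\EE_{2n,2m}$, respectively.

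First I would expand by multilinearity of the cumulant in each of the $n+m$ arguments, writing each copy of $ab-ba$ as $c^{(0)}-c^{(1)}$ with $c^{(0)}=ab$ and $c^{(1)}=ba$. This yields
\[
\freec{n,m}{ab-ba}\;=\;\sum_{\varepsilon\in\{0,1\}^{n+m}} (-1)^{|\varepsilon|}\,\kappa_{n,m}\bigl(c^{(\varepsilon_1)},\ldots,c^{(\varepsilon_{n+m})}\bigr).
\]
Applying the second order products-as-arguments formula, exactly as in the proof of Proposition~\ref{prop.basic.anticomm.2} for the anti-commutator, rewrites each summand as a sum over $(\cU,\pi)\in PS_{\NC}(2n,2m)$, where slot $2k-1$ carries the first factor and slot $2k$ the second factor of $c^{(\varepsilon_k)}$. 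Exchanging the order of summation produces a sum over $(\varepsilon,(\cU,\pi))$ of signed mixed cumulants of $a$ and $b$.

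Second, freeness of $a$ and $b$ forces each block of $\cU$ to be monochromatic in the $a/b$ labeling; equivalently $\GG_\cU$ must be bipartite, and the surviving $(\cU,\pi)$ then lie either in $\JJ_{2n,2m}$ (when $\cU=\pi$) or in $\NCac_{2n,2m}$ (when $\cU$ merges one cycle of $\pi_1$ with one cycle of $\pi_2$). For each connected bipartite $\GG_\cU$ there are exactly two proper $2$-colorings, and hence two compatible $\varepsilon$'s which are bitwise complements $\varepsilon$ and $\overline{\varepsilon}$ with $|\varepsilon|+|\overline{\varepsilon}|=n+m$. Summing their two signed contributions produces the symmetric pair $\freec{\pi'}{a}\freec{\pi''}{b}+(-1)^{n+m}\freec{\pi'}{b}\freec{\pi''}{a}$ multiplied by a common sign which one identifies with the $s(\pi)=\pm 1$ of the theorem.

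The main obstacle is the cancellation step, where one must prove both that non-admissible $\pi\in\JJ_{2n,2m}$ contribute zero collectively \emph{(a)}, and that $(\cU,\pi)\in\NCac_{2n,2m}$ containing any odd-size cycle contribute zero collectively \emph{(b)}. The plan is to construct sign-reversing, fixed-point-free involutions on each of these bad sets. For \emph{(a)}, given a non-admissible $\pi$ I would pick the smallest $i$ such that the cycle of $I\pi$ contains both $2i-1$ and $2i$ and define $\pi^*$ by an explicit local modification at this pair; the key verifications are that (i) $\pi^*$ remains a non-crossing annular permutation in $\JJ_{2n,2m}$ with the same graph and bipartite decomposition, (ii) the involution flips exactly one coordinate $\varepsilon_i$ and hence swaps the overall sign, and (iii) applying the same rule to $\pi^*$ recovers $\pi$. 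For \emph{(b)}, an analogous local move along an edge incident to the offending odd cycle should do the job, with the additional care that the merged block $U=A\sqcup B$ remains distinguished; showing that the annular non-crossing structure survives under the local transposition is the delicate combinatorial point here. Once \emph{(a)} and \emph{(b)} are established, collecting the surviving admissible or even-cycled configurations weighted by $s(\pi)$ yields exactly the two sums in the statement.
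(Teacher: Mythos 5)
Your overall architecture matches the paper's: multilinearity plus products-as-arguments gives a signed sum over partitioned permutations, second order freeness forces $\GG_\cU$ to be connected and bipartite with exactly two complementary colorings $\varepsilon,\overline{\varepsilon}$, and the reduction from $\JJ_{2n,2m}$ to $\AA_{2n,2m}$ and from $\NCac_{2n,2m}$ to $\NCac\EE_{2n,2m}$ is achieved by sign-reversing pairings on the ``bad'' configurations. The gap is that the pairing itself --- which is the entire technical content of the theorem --- is asserted rather than constructed, and the mechanism you describe would fail. For step \emph{(a)}, a ``local modification at the pair $(2i-1,2i)$'' that ``flips exactly one coordinate $\varepsilon_i$'' does not work in general: conjugating $\pi$ by the transposition $(2i-1,2i)$ typically destroys the annular non-crossing property or the condition that $\pi^{-1}\gammanm$ separates even. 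The paper's map (Definition \ref{def:bijection}) conjugates by a cyclic shift $\tau$ of an entire interval $\{2i-1,\dots,2j\}$, where $2j$ is located by reading the ordered list $L_\pi$ built from the cycles $C^{\text{out}}$ and $C^{\text{inn}}$ of $I\pi$; it is a bijection $\FF^E_{2n,2m}\to\FF^O_{2n,2m}$ between \emph{even} and \emph{odd} cancelable permutations (not an involution anchored at a fixed pair), and the verification that $T_\pi$ remains in $\JJ_{2n,2m}$ --- same graph, non-crossing, $T_\pi\lor\gammanm=\boldsymbol{1}$, Kreweras complement still separates even --- together with the sign flip occupies all of Proposition \ref{prop:properties.T}. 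The sign flip itself comes from counting even elements of $A(\varepsilon_\pi)$ changing by exactly one under $\tau$, not from toggling a single entry of $\varepsilon$. Without specifying the move and proving these facts, the cancellation in the first sum is unsupported.

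For step \emph{(b)} the paper does not invent a new local move along an edge: since here $\pi=\pi_1\times\pi_2$ with each factor a disk non-crossing partition, it imports the Nica--Speicher involution $\Psi$ on non-crossing partitions possessing an odd block \cite{nica1998commutators} and extends it to quadruples $(\pi_1,\pi_2,C_1,C_2)$, using that $\Psi$ matches blocks of equal size so the distinguished merged block is carried along. Your sketch for this step is again unspecified, and the delicate point you flag (preserving the annular structure under a transposition) is precisely what is avoided by reducing to the first-order involution. Finally, your conclusion needs one more observation to match the stated formula: for $\pi\in\NCac\EE_{2n,2m}$ the only compatible tuples are $(1,*,1,*,\dots)$ and its complement, so $s(\pi)=1$ and $n+m$ is even, which is why the surviving second sum carries no sign at all.
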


The first three formulas look as follows:
\begin{align*}
\freec{1,1}{ab-ba} &= 2\freec{2}{a}\freec{2}{b},\\
\freec{2,1}{ab-ba} = \freec{1,2}{ab-ba}& =0,\\
\freec{2,2}{ab-ba} &= 4\freec{4}{a}\freec{4}{b}+12(\freec{2}{a}\freec{2}{b})^2 +12\freec{4}{a}(\freec{2}{b})^2+12\freec{4}{b}(\freec{2}{a})^2+4\freec{2,2}{a}(\freec{2}{b})^2+4\freec{2,2}{b}(\freec{2}{a})^2, 
\end{align*}

\begin{remark}
One can check that the second sum in Theorem \ref{Thm.commutator.main.2.intro} vanishes completely, unless $n$ and $m$ are both even. Notice also that the second sum is cancellation free, as there are only positive signs. This property is basically inherited from the first order case. On the contrary, the first sum in Theorem \ref{Thm.commutator.main.2.intro} is not cancellation-free, in Section \ref{ssec:examples.commutator} we will see how in the computation of $\freec{2,1}{ab-ba}$, one needs to cancel some terms associated to admissible permutations in $\AA_{4,2}$.

It is not hard to check that the set $\AA_{2n,2m}$ of admissible permutations, which indexes the first sum, does not contain any permutations with fixed points (cycles of size 1). Since such terms do not appear in the second sum either, then this means that the right hand side does not contain any terms of the form $\freec{1}{a}$ or $\freec{1}{b}$ at all. Therefore, the commutator $ab-ba$ does not depends on the expected values of $a$ and $b$, meaning that one can always consider centered variables.

However, the set $\AA_{2n,2m}$ of admissible permutations may contain odd cumulants $\freec{2n+1}{a}$ or $\freec{2n+1}{b}$ with $n\geq 1$. In fact, in Remark \ref{rem:odd.cumulants.appear} we will see that the formula for $\freec{4,2}{ab-ba}$ already contains a term of the form $8(\freec{3}{a})^2(\freec{3}{b})^2$ that does not cancel. To put things into perspective, recall that  in the first order case the formula for the commutator only involves even cumulants. Thus, the dependence of the commutator on odd cumulants is a new phenomenon appearing only in the second order case. Ultimately, this is one the reasons why finding an indexing set in the first sum that does not have cancellations is a challenging task. 
\end{remark}

The same ideas used in the proof of Theorem \ref{Thm.anticommutator.main.2} can be adapted to study the second order free cumulants of the product of two second order free variables. Recall that the formula in the first order was derived in \cite{nica1996multiplication}:
\begin{equation}\label{Equation: free cumulants of product of free variables.intro}
\kappa_n(ab)=\sum_{\pi\in \NC(n)}\freec{\pi}{a}\, \freec{Kr_n(\pi)}{b},
\end{equation}
where $Kr_n(\pi)$ is Kreweras complement of a non-crossing partition in the $n$-disk. Regarding a partition as a permutation the Kreweras complement is defined as the permutations $Kr_n(\pi):=\pi^{-1}\gamma_n$ where $\gamma_n:=(1,\dots,n)\in S_n$. Similarly, the Kreweras complement of a non-crossing permutation in the $(n,m)$-annulus $\pi\in S_{\NC}(n,m)$ is defined as $Kr_{n,m}(\pi):=\pi^{-1}\gamma_{n,m}$, where $\gamma_{n,m}:=(1,\dots,n)(n+1,\dots,n+m)\in S_{n+m}$. For a detailed discussion on the Kreweras complement we refer to Sections \ref{ssec:partitions} and \ref{ssec:partitioned.permutations}. Our next result, provides a second order analogue of Equation \eqref{Equation: free cumulants of product of free variables.intro}, that preserves its nature.

\begin{theorem}\label{Thm: Product of second order free variables}
Let $a,b$ be two second order free random variables, then for any $n,m\geq 1$
\begin{eqnarray}\label{Equation: Second order free cumulants of product of free variables}
\kappa_{n,m}(ab)&=&\sum_{\pi\in S_{\NC}(n,m)}\freec{\pi}{a}\, \freec{Kr_{n,m}(\pi)}{b} \\
&+& \sum_{\pi=\pi_1\times \pi_2}\sum_{\substack{U\in\pi_1 \\ V\in \pi_2}}\kappa_{|U|,|V|}^a\, \freec{\pi\setminus\{U,V\}}{a}\, \freec{Kr(\pi)}{b} \nonumber \\
&+& \sum_{\pi=\pi_1\times \pi_2}\sum_{\substack{U\in Kr_n(\pi_1) \\ V\in 
 Kr_m(\pi_2)}}\kappa_{|U|,|V|}^b\, \freec{\pi}{a}\, \freec{ Kr(\pi) \setminus \{U,V\}}{b} \nonumber
\end{eqnarray}
where the second and third sums are over $\pi=\pi_1\times\pi_2\in \NC(n)\times \NC(m)$ and $Kr(\pi)=Kr_n(\pi_1)\cup Kr_m(\pi_2)$ is the union of the Kreweras complements of $\pi_1$ and $\pi_2$.
\end{theorem}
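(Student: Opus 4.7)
The plan is to mimic the classical Nica--Speicher derivation of \eqref{Equation: free cumulants of product of free variables.intro}, replacing the first-order product-as-arguments formula by its second-order analogue of \cite{mingo2009second}, and then using second-order freeness to cut down the resulting sum. Applying the second-order product-as-arguments formula to the alternating list
$\underbrace{a,b,a,b,\ldots,a,b}_{2n}\,;\,\underbrace{a,b,\ldots,a,b}_{2m}$
allows one to write
\[
\kappa_{n,m}(ab,\ldots,ab\,;\,ab,\ldots,ab) \;=\; \sum_{(\UU,\pi)} \kappa_{(\UU,\pi)}(a,b,a,b,\ldots),
\]
where the sum runs over $(\UU,\pi) \in PS_{\NC}(2n,2m)$ satisfying the standard join condition with the pairing $\sigma = \{\{1,2\},\{3,4\},\ldots,\{2n+2m-1,2n+2m\}\}$.

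Next, I would invoke the vanishing of mixed second-order cumulants of freely independent variables: any block of $\UU$ mixing an odd position (an $a$-entry) with an even position (a $b$-entry) kills the corresponding term. Hence only partitioned permutations with $\UU = \UU_a \sqcup \UU_b$ and $\pi = \pi_a \sqcup \pi_b$ survive, where $\pi_a,\UU_a$ are supported on the odd positions and $\pi_b,\UU_b$ on the even ones. After relabeling the odd (resp.\ even) positions in each disk as $1,\ldots,n$ and $1,\ldots,m$, the join-with-$\sigma$ condition together with the non-crossing structure of $PS_{\NC}(2n,2m)$ forces the Kreweras-complement relation on the $(n,m)$-annulus: the shape of $\pi_a$ determines $\pi_b$ uniquely as the appropriate Kreweras complement.

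Finally, I would split the surviving sum according to the decomposition $PS_{\NC}(2n,2m) = S_{\NC}(2n,2m) \cup S'_{\NC}(2n,2m)$. If $(\UU,\pi) \in S_{\NC}(2n,2m)$ then $\UU = \pi$ is connected across the annulus, only first-order cumulants appear, and after the parity-reduction this yields the first sum in \eqref{Equation: Second order free cumulants of product of free variables} indexed by $\pi \in S_{\NC}(n,m)$ together with its annular Kreweras complement $Kr_{n,m}(\pi)$. If $(\UU,\pi) \in S'_{\NC}(2n,2m)$, then $\pi = \pi_1 \times \pi_2$ is disconnected and $\UU$ merges exactly one cycle of $\pi_1$ with one cycle of $\pi_2$; this merged pair is either on the $a$-side, producing a single second-order $a$-cumulant $\kappa_{|U|,|V|}^{a}$ multiplied by first-order factors from the remaining blocks of $\pi$ and from all of $Kr(\pi)$, and giving the second sum; or on the $b$-side (i.e.\ on two blocks of the Kreweras complement), producing the third sum with $\kappa_{|U|,|V|}^{b}$.

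The main obstacle is the last step: verifying that the join condition in $PS_{\NC}(2n,2m)$ combined with the parity separation of blocks translates faithfully into the Kreweras duality on the $(n,m)$-annulus, and in particular that the marked block of $\UU$ (in the disconnected case) is confined to exactly one of the two sides, giving rise to the two separate disconnected sums rather than a mixed or overcounted contribution. This is the second-order analogue of the identity $\pi \vee Kr_n(\pi) = \hat{1}_n$ used in the classical proof, but now one must track the annular-connected versus disk-disconnected scenarios of $PS_{\NC}$ and ensure that the bijection between $(\UU,\pi)$ and the data $(\pi_a, \pi_b = Kr(\pi_a), \text{marked side})$ is exact.
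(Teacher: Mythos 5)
Your overall strategy coincides with the paper's: expand $\kappa_{n,m}(ab,\dots,ab)$ via the second-order products-as-arguments formula over $(\cU,\pi)\in \PS_{\NC}(2n,2m)$ with $\pi^{-1}\gammanm$ separating even, use the vanishing of mixed first- and second-order cumulants to force every block of $\cU$ to lie entirely in the odd positions (the $a$'s) or the even positions (the $b$'s), and then split according to $S_{\NC}(2n,2m)$ versus $S_{\NC}'(2n,2m)$, with the merged block of $\cU$ in the disconnected case landing on the $a$-side or the $b$-side to produce the two second-order-cumulant sums. All of this matches the paper's proof, including the observation that in the $S_{\NC}'$ case each factor $\pi_i$ splits into an odd part and an even part that are disk Kreweras complements of one another by the first-order Nica--Speicher argument.

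However, the step you flag as the ``main obstacle'' is precisely the technical content of the paper's proof, and your proposal does not supply the idea needed to close it: namely, that for a genuinely annular $\pi\in S_{\NC}(2n,2m)$ with $\pi^{-1}\gammanm$ separating even and $\{\mathcal{O},\mathcal{E}\}\geq\pi$, the even part $\pi_{\mathcal{E}}$ is the \emph{annular} Kreweras complement $Kr_{n,m}(\pi_{\mathcal{O}})$ after relabeling. One cannot simply quote the disk identity here, since the relation $\pi_{\mathcal{E}}=\pi_{\mathcal{O}}^{-1}\gamma|_{\mathcal{O}}$ must be verified with respect to the two-cycle permutation $\gammanm$. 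The paper's device is to pick a through-string: since $\pi\vee\gammanm=\boldsymbol{1}$ there exist $2u\in[2n]$ and $2v\in[2m]$ with $\pi(2u)=2v$; multiplying by the transposition $(2u,2v)$ turns $\gammanm$ into a single cycle and $\pi$ into a disk non-crossing permutation $\bar{\pi}$ with $\bar{\pi}^{-1}\bar{\gamma}$ still separating even, so the first-order Kreweras identification applies to $\bar{\pi}$; one then undoes the transposition and checks, treating the cases $i=u,v$ and $i\neq u,v$ separately, that the identity $\pi_{\mathcal{E}}(2i)=\pi_{\mathcal{O}}^{-1}\gamma|_{\mathcal{O}}(2i-1)+1$ survives the correction. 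Without this reduction (or an equivalent argument) the first sum in \eqref{Equation: Second order free cumulants of product of free variables} is not justified, so this should be regarded as a genuine gap rather than a routine verification.
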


\begin{remark}\label{Remark: Similar result to cumulants of free variables}
A result in the same direction of Theorem \ref{Thm: Product of second order free variables} was derived in \cite[Theorem 7.3]{arizmendi2023second}, where the authors compute the second order cumulants of the product of second order free random variables. However they assume the extra hypothesis that the second order cumulants of the individual variables are all $0$, which ultimately leads to a cancellation of the second and third sums in the right hand side of \eqref{Equation: Second order free cumulants of product of free variables}. In this sense, our result generalizes \cite[Theorem 7.3]{arizmendi2023second}. A parallel version for higher order cumulants of the product of free variables was derived in \cite[Theorem 7.9]{collins2007second}. They index the sum over partitioned permutations though, while our version is indexed by non-crossing permutations and their Kreweras complement.
\end{remark}

Finally, Theorem \ref{Thm: Product of second order free variables} has two alternative versions where the sums are rather indexed by non-crossing pairings (Lemma \ref{Lemma: Second order cumulants of free product.v1}) and graphs that are either trees or have a single cycle (Lemma \ref{Lemma: Second order cumulants of free product.v2}). Our approach using graphs proves useful to easily compute the second order free cumulants of the product of centered second order variables, see Corollary \ref{Corollary: Product of centered second order free variables}.\\

As an application, we use our results in the case where the variables are second order free semicircular variables, which are characterized by having all cumulants equal to zero, except for $\kappa_2$ and $\kappa_{2,2}$. These variables emerged in the large $N$-limit of $N\times N$ Wigner random matrices, see for instance \cite{male2022joint, mingo2024asymptotic}.

\begin{proposition}\label{Proposition: example of semicircles.intro}
Let $(\AA,\varphi,\varphi^{2})$ be a second order non-commutative probability space and let $a,b\in \AA$ be two second order free semicircular variables such that $\freec{2}{a}=\freec{2,2}{a}=\freec{2}{b}=\freec{2,2}{b}=1$. Then the second order cumulants of their anti-commutator are given by
$$
\freec{n,m}{ab+ba}= \left\{ \begin{array}{lc} 2\frac{(n+m-1)!}{(n-1)!(m-1)!}  & \text{if } n\text{ and }m\text{ are odd, }\\
2\frac{(n+m-1)!}{(n-1)!(m-1)!}+2nm &  \text{if }n\text{ and }m\text{ are even,}\\
0 &  \text{otherwise.} \end{array} \right.
$$
\end{proposition}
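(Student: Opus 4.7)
The plan is to specialize Theorem \ref{Thm.anticommutator.main.2} to the given input. Since all free cumulants of $a$ and $b$ vanish except $\freec{2}{a}=\freec{2}{b}=\freec{2,2}{a}=\freec{2,2}{b}=1$, only pairings survive in the right hand side of \eqref{formula.anticommutator.main.2.intro}: in the first sum each surviving $\pi$ must have every cycle of length $2$, and in the second sum additionally $|A|=|B|=2$. Each surviving summand equals $1\cdot 1+1\cdot 1=2$, so the formula collapses to
\[
\freec{n,m}{ab+ba} \;=\; 2\,N_1(n,m)+2\,N_2(n,m),
\]
where $N_1(n,m):=\#\{\pi\in\JJ_{2n,2m}:\pi\text{ is a pairing}\}$ and $N_2(n,m):=\#\{(\cU,\pi)\in\NCac_{2n,2m}:\pi\text{ is a pairing}\}$. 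It remains to evaluate the two counts combinatorially.

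For $N_2$, I would invoke the reformulation mentioned in the text as Theorem \ref{Thm.anticommutator.main.2.2}, which recasts the second sum as a sum over $\pi_1\in\NCac_{2n}$, $\pi_2\in\NCac_{2m}$ together with distinguished blocks $A\in\pi_1$, $B\in\pi_2$. For a non-crossing pairing $\pi_0$ of $[2k]$ the graph $\GG_{\pi_0}$ has $k$ vertices and $k$ edges; any block $\{2p-1,2p\}$ is a loop, hence an odd cycle of length $1$ that destroys bipartiteness. A short outermost-block argument (the outermost blocks of a non-crossing pairing always have odd left endpoints and even right endpoints, so boundary edges between distinct outermost blocks are never of the form $(2p-1,2p)$, forcing a unique outermost block $\{1,2k\}$; and inside, loop avoidance forces $2$ to pair with $3$, $4$ to pair with $5$, etc.) shows that the only pairing of $[2k]$ with no loops is the \emph{rainbow} $\{\{1,2k\},\{2,3\},\{4,5\},\dots,\{2k-2,2k-1\}\}$, whose graph is a single $k$-cycle, bipartite iff $k$ is even. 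Hence $\NCac_{2k}$ restricted to pairings is a singleton when $k$ is even and empty when $k$ is odd. Since a pairing of $[2k]$ has exactly $k$ blocks available as distinguished $A$ or $B$, this yields
\[
N_2(n,m)=
\begin{cases}
nm & \text{if both $n,m$ are even,}\\
0 & \text{otherwise,}
\end{cases}
\]
producing the $+2nm$ correction in the even-even case of the statement.

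For $N_1$, I would analyze pairings $\pi\in S_{\NC}(2n,2m)$ whose graph $\GG_\pi$ is bipartite and whose complement $\pi^{-1}\gammanm$ separates every pair $\{2i-1,2i\}$. Bipartiteness again excludes loops, so every block joins one odd and one even element. Annular connectedness forces at least one through-pair (joining the outer to the inner boundary component). The plan is to cut the annulus along a chosen through-pair, unfold to a linear picture on $\{1,\dots,2n+2m\}$, and count the resulting non-crossing odd-even pairings compatible with the separation and bipartite conditions; a standard multinomial/Chu--Vandermonde style argument (where one selects the origin of the distinguished through-pair and interleaves the remaining odd-even arcs) should give $N_1(n,m)=\tfrac{(n+m-1)!}{(n-1)!(m-1)!}$ when $n\equiv m\pmod 2$ and $N_1(n,m)=0$ otherwise. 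The parity obstruction is inherited from the requirement that the annular cycle of $\GG_\pi$ (the unique cycle of $\GG_\pi$ that wraps around the hole) have even length.

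The main obstacle is the computation of $N_1(n,m)$: both the closed form and the parity restriction require a careful bijection that simultaneously keeps track of the annular topology, the odd-even coloring forced by bipartiteness, and the separation condition on $\pi^{-1}\gammanm$. Once this is in hand, combining with the formula for $N_2$ in the identity $\freec{n,m}{ab+ba}=2N_1(n,m)+2N_2(n,m)$ yields the three claimed cases of the proposition immediately.
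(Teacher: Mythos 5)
Your reduction of the proposition to the identity $\freec{n,m}{ab+ba}=2N_1(n,m)+2N_2(n,m)$ is exactly the paper's starting point, and your evaluation of $N_2$ (the unique loop-free non-crossing pairing of $[2k]$ is $\{\{1,2k\},\{2,3\},\dots,\{2k-2,2k-1\}\}$, whose graph is a $k$-cycle, bipartite iff $k$ is even, giving $nm$ choices of distinguished blocks) matches the paper's treatment of the second sum. However, the evaluation of $N_1$ — which is the actual content of the proposition — is not carried out: you state the target value $\frac{(n+m-1)!}{(n-1)!(m-1)!}$ and describe a cut-and-unfold plan, but you yourself flag this as ``the main obstacle,'' so the proof is incomplete precisely where the work lies. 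Worse, the sketch for $N_1$ starts from a false claim: excluding loops only forbids blocks of the form $\{2i-1,2i\}$, it does \emph{not} force every block to join an odd element to an even one. For example, the unique element of $\JJ_{2,2}$ is $(1,3)(2,4)$, whose blocks are all-odd and all-even; more generally the surviving pairings are the spoke-type diagrams whose through-strings are parity-\emph{preserving}. You also misquote the defining condition of $\JJ_{2n,2m}$: ``$\pi^{-1}\gammanm$ separates even'' means no cycle of $\pi^{-1}\gammanm$ contains two even numbers, not that it separates $2i-1$ from $2i$. An unfolding argument built on the odd--even coloring of blocks would therefore count the wrong set.

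For comparison, the paper closes this gap in two ways. One proof uses Proposition \ref{prop:I.pi}: for a pairing $\pi\in\JJ_{2n,2m}$ the separation condition forces $I\pi=C^{\text{out}}C^{\text{inn}}$ to have exactly two cycles, which determine each other via $C^{\text{inn}}=(I(c_j)\cdots I(c_1))$ and have common even length $n+m$; one then counts the interleavings of $2,4,\dots,2n$ with $2n+2m-1,2n+2m-3,\dots,2n+1$ inside $C^{\text{out}}$, obtaining $m\binom{n+m-1}{m}=\frac{(n+m-1)!}{(n-1)!(m-1)!}$. The other identifies the surviving pairings as the spoke-diagram set $\Ssemicircle(2n,2m)$ and counts it by a Vandermonde identity (Proposition \ref{Proposition: Counting semicircle set}). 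Either of these, or a corrected version of your unfolding bijection, is needed to make the argument complete.
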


The term $\frac{(n+m-1)!}{(n-1)!(m-1)!}$ counts non-crossing pairings with a very specific structure, we leave the precise description of this set to Section \ref{secc:examples}. We can also deal with the general case, where the cumulants $\freec{2}{a},\freec{2,2}{a},\freec{2}{b}$, and $\freec{2,2}{b}$ have arbitrary values, see Remark \ref{rem.anticommutator.semicircular.general}.

The formula for the commutator is almost the same, except for a sign.

\begin{proposition}\label{prop.commutator.semicircular.intro}
Let $(\AA,\varphi,\varphi^{2})$ be a second order non-commutative probability space and let $a,b\in \AA$ be two second order free semicircular variables such that $\freec{2}{a}=\freec{2,2}{a}=\freec{2}{b}=\freec{2,2}{b}=1$. Then the second order cumulants of their commutator are given by
$$
\freec{n,m}{ab-ba}= \left\{ \begin{array}{lc} -2\frac{(n+m-1)!}{(n-1)!(m-1)!}  & \text{if } n\text{ and }m\text{ are odd, }\\
2\frac{(n+m-1)!}{(n-1)!(m-1)!}+2nm(-1)^{\frac{m+n}{2}} &  \text{if }n\text{ and }m\text{ are even,}\\
0 &  \text{otherwise.} \end{array} \right.
$$
\end{proposition}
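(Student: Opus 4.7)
The plan is to specialize Theorem \ref{Thm.commutator.main.2.intro} to the semicircular setting, where all free cumulants of $a$ and $b$ vanish except $\kappa_2^a=\kappa_2^b=\kappa_{2,2}^a=\kappa_{2,2}^b=1$. A term in either sum survives only if every cycle of the indexing permutation has size exactly $2$ (so $\pi$ is a pairing) and, in the second sum, the merged cycles satisfy $|A|=|B|=2$. Both sums therefore collapse to enumerations of specific non-crossing pairings on the $(2n,2m)$-annulus, modulated in the first sum by the sign $s(\pi)$ from \eqref{not:sign.pi}.

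For the first sum, the combinatorial factor $\kappa_{\pi'}^a\kappa_{\pi''}^b+(-1)^{n+m}\kappa_{\pi'}^b\kappa_{\pi''}^a$ reduces to $1+(-1)^{n+m}$, which vanishes when $n$ and $m$ have opposite parities. In that mixed-parity case the first sum contributes zero, and (as argued below) so does the second, yielding the third branch of the piecewise formula. When $n+m$ is even, the first sum becomes $2\sum_\pi s(\pi)$ ranging over admissible bipartite pairings in $\AA_{2n,2m}$. I would parameterize these pairings, exploiting their close relationship to the family of size $\frac{(n+m-1)!}{(n-1)!(m-1)!}$ that appears in the anti-commutator argument for Proposition \ref{Proposition: example of semicircles.intro}, and evaluate $s(\pi)$ for each from its definition, to obtain the signed count that matches the claimed coefficients.

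For the second sum, restricted to pairings, a configuration $(\cU,\pi)\in \NCac\EE_{2n,2m}$ with $\pi=\pi_1\times\pi_2$ must have both $\GG_{\pi_1}$ and $\GG_{\pi_2}$ connected (so that $\GG_\cU$ is connected after merging $A$ with $B$), hence single cycles; and those cycles must have even length (so that $\GG_\cU$ is bipartite). Both $n$ and $m$ must therefore be even, otherwise the second sum is empty. In the both-even case, a direct enumeration shows the relevant non-crossing pairings $\pi_1,\pi_2$ are essentially unique (a "staircase" pattern such as $\{1,2n\}\{2,3\}\{4,5\}\cdots\{2n-2,2n-1\}$ on each disk), with $n$ choices for $A\in\pi_1$ and $m$ for $B\in\pi_2$, giving $nm$ configurations each contributing $2$ from the two symmetric cumulant products.

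The principal obstacle is the signed enumeration $\sum s(\pi)$ in the first sum. The sign is defined permutation-theoretically in terms of the annular combinatorics of $\pi$, and the signed count must reproduce the parity-dependent coefficients in the statement: it must yield $-\frac{(n+m-1)!}{(n-1)!(m-1)!}$ when $n,m$ are both odd and the value $\frac{(n+m-1)!}{(n-1)!(m-1)!}+nm\bigl((-1)^{(n+m)/2}-1\bigr)$ in the both-even case, producing a partial cancellation against the uniformly positive $2nm$ contribution of the second sum to leave $2nm(-1)^{(n+m)/2}$ in total. I expect to obtain this by identifying the sign $s(\pi)$ with a simple statistic of the pairing (for instance the parity of a block count, or equivalently of the number of cycles of $I\pi$), either by an inductive peeling-off of outer pairs or by direct computation; once this is in place, assembling the piecewise formula is immediate.
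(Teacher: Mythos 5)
There is a genuine gap in your plan for the both-even case: you place the factor $(-1)^{(n+m)/2}$ in the wrong sum. In the first sum, the only surviving pairings are those with $I\pi=C^{\text{out}}C^{\text{inn}}$ (equivalently, $\GG_\pi$ a single simple cycle of length $n+m$); such graphs have no cutting edges, so all $\frac{(n+m-1)!}{(n-1)!(m-1)!}$ of these pairings are admissible, and a computation with the bipartition $A'\sqcup A''$ shows they all carry the \emph{same} sign $s(\pi)=(-1)^m$. Hence the first sum is exactly $2(-1)^m\frac{(n+m-1)!}{(n-1)!(m-1)!}$; there is no internal cancellation available to produce the extra term $2nm\bigl((-1)^{(n+m)/2}-1\bigr)$ that your bookkeeping requires. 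The factor $(-1)^{(n+m)/2}$ in fact comes from the type-2 (second-sum) terms: for the unique staircase $\pi=\pi_1\times\pi_2$ there, the two contributing tuples are $\varepsilon_\pi=(1,*,1,*,\dots)$ and its opposite, each with $(n+m)/2$ stars, so each of the $nm$ configurations contributes $2s(\varepsilon_\pi)=2(-1)^{(n+m)/2}$ rather than $+2$. Your expectation that the second sum is uniformly $+2nm$ (inherited from the sign-free form of Theorem \ref{Thm.commutator.main.2.intro}) is what forces you to look for a phantom cancellation in the first sum; a direct check with, say, $n=2$, $m=4$ confirms that each type-2 term contributes $-2$, not $+2$.

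The paper's proof avoids this by specializing Theorem \ref{Thm.commutator.main} (the version in which the second sum still carries the factor $s(\pi)$) and by reusing the $I\pi$-analysis of the anti-commutator case: it shows $s(\pi)=(-1)^m$ on every first-sum pairing and $s(\pi)=(-1)^{(n+m)/2}$ on the second-sum configuration, whence the two displayed contributions $2(-1)^m\frac{(n+m-1)!}{(n-1)!(m-1)!}$ and $2nm(-1)^{(n+m)/2}$ add up to the stated formula with no cancellation between the sums. The rest of your outline (vanishing for mixed parity, reduction to pairings, the staircase form and the $nm$ count in the second sum) agrees with the paper; the piece you must repair is the signed enumeration, replacing the hoped-for partial cancellation by the uniform-sign computations above.
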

Same as with the anti-commutator, we can also deal with the general case.  Finally, the formula for the product, is the following. 

\begin{proposition}\label{Proposition: example of semicircles.intro.v2.product}
Let $(\AA,\varphi,\varphi^{2})$ be a second order non-commutative probability space and let $a,b\in \AA$ be two second order free semicircular variables. 
Then the second order cumulants of their product are given by
$$
\freec{n,m}{ab}= \left\{ \begin{array}{lc} n(\freec{2}{a}\freec{2}{b})^n  & \text{if } n=m,\\
0 &  \text{otherwise.} \end{array} \right.
$$
\end{proposition}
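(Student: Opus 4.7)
The plan is to apply the product formula of Theorem \ref{Thm: Product of second order free variables} to two second order free semicircular variables and identify which terms in the three sums of \eqref{Equation: Second order free cumulants of product of free variables} survive. The key inputs are the vanishing conditions for semicircular variables: $\freec{k}{a} = 0$ for $k \neq 2$, and $\freec{p,q}{a} = 0$ unless $(p,q) = (2,2)$, and similarly for $b$.

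I claim that the second and third sums vanish identically. Consider the second sum: a nonzero contribution requires $\kappa_{|U|,|V|}^a \neq 0$ (so $|U|=|V|=2$), $\freec{\pi\setminus\{U,V\}}{a} \neq 0$ (so every remaining cycle of $\pi = \pi_1\times\pi_2$ is a pair), and $\freec{Kr(\pi)}{b} = \freec{Kr_n(\pi_1)}{b}\,\freec{Kr_m(\pi_2)}{b} \neq 0$ (so $Kr_n(\pi_1)$ and $Kr_m(\pi_2)$ are both pairings). But $\pi_1\in\NC(n)$ being a non-crossing pairing on $n$ points has $n/2$ blocks, so the standard identity $\#\text{blocks}(\pi_1)+\#\text{blocks}(Kr_n(\pi_1)) = n+1$ forces $Kr_n(\pi_1)$ to have $n/2+1$ blocks, which cannot be a pairing. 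A symmetric argument applies to the third sum.

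The remaining task is to evaluate the first sum, which requires counting $\pi\in S_{\NC}(n,m)$ such that both $\pi$ and $Kr_{n,m}(\pi) = \pi^{-1}\gamma_{n,m}$ are pairings. Both are then fixed-point-free involutions, so expanding $(\pi^{-1}\gamma_{n,m})^2 = \mathrm{id}$ together with $\pi^{-1}=\pi$ yields the conjugation relation $\pi\gamma_{n,m}\pi^{-1} = \gamma_{n,m}^{-1}$. This sends the two cycles of $\gamma_{n,m}$ (on the sets $\{1,\dots,n\}$ and $\{n+1,\dots,n+m\}$) to the two cycles of $\gamma_{n,m}^{-1}$ (on the same underlying sets). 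Thus $\pi$ either preserves both sets, in which case it splits into disjoint involutions on each and fails to connect the two cycles of $\gamma_{n,m}$ (contradicting $\pi\in S_{\NC}(n,m)$), or it swaps them, which forces $n=m$.

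In the swap case, writing $\pi(i) = n+f(i)$ for a bijection $f:\{1,\dots,n\}\to\{1,\dots,n\}$, the conjugation relation reduces to $f(i+1)\equiv f(i)-1\pmod{n}$, uniquely determining $f$ from $f(1)$ and producing exactly $n$ candidate involutions. Geometrically these are the $n$ rotational ``spoke'' matchings between the outer and inner cycles of the annulus, which are manifestly non-crossing annular; algebraically this is confirmed by checking $\#\text{cycles}(\pi)+\#\text{cycles}(Kr_{n,m}(\pi)) = 2n = n+m$, the genus-zero condition on the $(n,n)$-annulus. The main subtlety is precisely this verification that each of the $n$ involutions genuinely lies in $S_{\NC}(n,m)$. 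Since each contributing $\pi$ has $n$ blocks of size $2$ and likewise for $Kr_{n,m}(\pi)$, each contributes $(\freec{2}{a})^n(\freec{2}{b})^n$, and summing over the $n$ permutations yields $\freec{n,m}{ab} = n(\freec{2}{a}\freec{2}{b})^n$ when $n=m$, and $0$ otherwise.
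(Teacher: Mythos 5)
Your argument is correct and reaches the same conclusion, but the core counting step is handled by a genuinely different route than the paper's. The treatment of the second and third sums is essentially identical (both reduce to the fact that the disc Kreweras complement of a pairing on $n$ points has $n/2+1$ blocks and hence a singleton, killing the term via $\kappa_1=0$). For the first sum, however, the paper argues geometrically: it shows that any non-through pair of $\pi$ forces, via the annular crossing condition AC-3 of Mingo--Nica, a singleton in $Kr_{n,m}(\pi)$, so all cycles must be through strings, and then identifies the survivors with the spoke diagrams $\mathcal{NC}_2^{spoke}(n)$. You instead observe that both $\pi$ and $\pi^{-1}\gamma_{n,m}$ being fixed-point-free involutions yields the conjugation relation $\pi\gamma_{n,m}\pi^{-1}=\gamma_{n,m}^{-1}$, so $\pi$ must swap the supports of the two cycles of $\gamma_{n,m}$ (forcing $n=m$, since preserving them would contradict $\pi\vee\gamma_{n,m}=1$), and the resulting functional equation $f(i+1)=f(i)-1$ pins down exactly $n$ rotational matchings. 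Your route is more elementary and self-contained, avoiding the annular non-crossing machinery entirely and deriving the constraint $n=m$ and the count $n$ in one stroke; the paper's route stays closer to the combinatorial language of through strings and spoke diagrams used throughout Section 7, which makes the connection to the anti-commutator computation more visible. One point you flag but only assert is the converse verification that each of the $n$ candidates lies in $S_{\NC}(n,m)$ with $Kr_{n,m}(\pi)$ a genuine pairing; this is a one-line computation ($\pi^{-1}\gamma_{n,m}$ pairs $i$ with $n+f(i)-1$, and $\#(\pi)+\#(\pi^{-1}\gamma_{n,m})=2n$), so the gap is cosmetic rather than substantive.
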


Notice that there are no individual cumulants of second order appearing in this formula.\\

\textbf{Organization of the paper.}
Besides this introduction, the paper contains six more parts organized as follows. In Section \ref{sec:prelim} we set the basic notation on partitioned permutations and free cumulants. The proof of our main formula for the anti-commutator, Theorem \ref{Thm.anticommutator.main.2}, is given in Section \ref{Section: proof}. Then in Section \ref{Section: detail study of the sets} we provide three different approaches to better understand the indexing set $\JJ_{2n,2m}$. In Section \ref{sec:commutator} we compute the commutator of two free random variables advertised in Theorem \ref{Thm.commutator.main.2.intro}. The proof of our main formula for the product, Theorem \ref{Thm: Product of second order free variables}, is given in Section \ref{secc:product}. Finally, in Section \ref{secc:examples}, we apply our methods to study semicircular variables, showing Propositions \ref{Proposition: example of semicircles.intro}, \ref{prop.commutator.semicircular.intro}, and \ref{Proposition: example of semicircles.intro.v2.product}.

\section{Preliminaries}
\label{sec:prelim}

In this section we discuss the preliminaries on non-crossing objects, namely, non-crossing partitions, permutations and partitioned permutations that concern our results. We also recall the concepts of freeness and cumulants.

\subsection{Partitions}
\label{ssec:partitions}

\noindent 
We begin with a brief introduction to the definitions and notations used in this paper regarding partitions of a set. For detailed discussion of all of these concepts the standard reference is \cite{NS}.  A \textit{partition $\pi$ of a finite set $S$} is a set of the form $\pi=\{V_1,\dots,V_k\}$ where $V_1,\dots, V_k\subset S$ are pairwise disjoint non-empty subsets of $S$ such that $V_1\cup \dots \cup V_k=S$. The subsets $V_1,\dots, V_k$ are called \textit{blocks} of $\pi$, and we write $\#(\pi)$ for number of blocks (in this case $k$).  We denote by $\PP(S)$ the set of all partitions of $S$, and we further write $\PP(n)$ in the special case where $S=[n]:=\{1,\dots,n\}$. 

Given a partition $\pi\in\PP(n)$, we say that $V\in\pi$ is an \emph{interval block} if it is of the form $V=\{i,i+1,\dots,i+j\}$ for some integers $1\leq i<i+j \leq n$. We further say that $\pi\in\PP(n)$ is an \textit{interval partition} if all the blocks $V\in \pi$ are intervals.

\begin{definition} 
We say that $\pi\in\PP(n)$ is a \textit{non-crossing partition} if for every $1\leq i < j < k < l \leq n$ such that $i, k$ belong to the same block $V$ of $\pi$ and $j,l$ belong to the same block $W$ of $\pi$, then it necessarily follows that all $i,j,k,l$ are in the same block, namely $V=W$. We will denote by $\NN\CC(n)$ the set of all non-crossing partitions of $[n]$.
\end{definition}

We can equip $\PP(n)$ with a lattice structure using the reverse refinement order $\leq$. For $\pi,\sigma\in \PP(n)$, we write ``$\pi \leq \sigma$'' if every block of $\sigma$ is a union of blocks of $\pi$. The maximal element of $\PP(n)$ with this order is $1_n:=\{\{1,\dots,n\}\}$ (the partition of $[n]$ with only one block), and the minimal element is $0_n:=\{\{1\},\{2\},\dots,\{n\}\}$ (the partition of $[n]$ with $n$ blocks). With this order, $(\NN\CC(n),\leq)$ is a sub-poset of $(\PP(n),\leq)$. 

\begin{definition}
\label{defi.joins}
Given two partitions $\pi,\sigma\in \PP(n)$, the \emph{join of $\pi$ and $\sigma$ in the lattice of all partitions}, denoted as $\pi\lor \sigma$, is defined as the smallest partition that is bigger than $\pi$ and $\sigma$ in the inverse refinement order. Given two partitions $\pi,\sigma\in \NN\CC(n)$, the \emph{join of $\pi$ and $\sigma$ in the lattice of non-crossing partitions}, denoted as $\pi\lor_{\NC} \sigma$, is defined as the smallest non-crossing partition that is bigger than $\pi$ and $\sigma$ in the reverse refinement order. 
\end{definition}

Notice that the definition of the join depends on which lattice we are working with. We are only concerned with the lattice $\NN\CC(n)$, but we will work with a special case where both joins coincide, and in this case we can make use of a explicit description of $\pi\lor\sigma$. In this paper, we are particularly interested in partitions $\pi$ such that $\pi\lor I_{2n}=1$ for $I_{2n}:=\{\{1,2\},\{3,4\},\dots,\{2n-1,2n\}\}$. Since $I_{2n}$ is an interval partition, this means that $\pi\lor \sigma=\pi\lor_{\NC} I_{2n}$ (see Exercise 9.43 of \cite{NS}). One can rephrase this relation in terms of the graph $\GG_\pi$ from Definition \ref{defi.Graph.pi}. Actually, the reason to construct the edges of $\GG_\pi$ by joining the blocks containing $2i-1$ and $2i$ is specifically to store information regarding $\pi\lor I_{2n}$.

\begin{lemma}[{\cite[Lemma 3.3]{perales2021anti}}]
\label{Lemma.connected}
Fix a partition $\pi\in \PP(2n)$. Then $\pi\lor I_{2n}=1_{2n}$ if and only if $\GG_\pi$ is connected.
\end{lemma}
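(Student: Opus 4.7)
The plan is to translate the lattice identity $\pi \lor I_{2n} = 1_{2n}$ into a connectivity statement that matches the definition of $\GG_\pi$ directly. The key tool I would invoke is the standard transitive closure description of the join in $\PP(2n)$: two elements $i, j \in [2n]$ lie in the same block of $\pi \lor I_{2n}$ if and only if there is a sequence $i = i_0, i_1, \ldots, i_r = j$ such that every consecutive pair $\{i_s, i_{s+1}\}$ is contained in a common block of either $\pi$ or $I_{2n}$. In particular, $\pi \lor I_{2n} = 1_{2n}$ is equivalent to connectivity of the auxiliary graph $H_\pi$ on the vertex set $[2n]$, with edges given by all the ``same-block'' pairs coming from $\pi$ or from $I_{2n}$.

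With this reformulation, the lemma reduces to showing that $H_\pi$ is connected if and only if $\GG_\pi$ is. For the forward direction, I would take a path in $H_\pi$ between elements of two different $\pi$-blocks $V$ and $W$, compress the steps performed inside a common $\pi$-block (which correspond to staying at the same vertex of $\GG_\pi$), and keep only the $I_{2n}$-steps of the form $\{2k-1, 2k\}$, which are by definition the edges of $\GG_\pi$. This yields a walk from $V$ to $W$ in $\GG_\pi$. For the converse, each edge $V_s V_{s+1}$ of a walk in $\GG_\pi$ is witnessed by some pair $\{2k_s - 1, 2k_s\}$ with one endpoint in $V_s$ and the other in $V_{s+1}$; concatenating moves of the form $V_s \to 2k_s - 1 \to 2k_s \to V_{s+1}$ (using $\pi$-steps inside $V_s$ and $V_{s+1}$) builds a path in $H_\pi$ between any chosen elements of $V$ and $W$.

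I do not expect a genuine obstacle here, since the argument is essentially unwinding definitions. The only point I would be careful about is that loops in $\GG_\pi$, which appear exactly when both $2k-1$ and $2k$ already belong to the same block of $\pi$, do not interfere with the translation: such loops contribute trivially to connectivity on both sides, and the compression step in the forward direction handles them automatically by discarding steps that remain inside a single $\pi$-block.
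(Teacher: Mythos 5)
Your proof is correct: the translation of $\pi\lor I_{2n}=1_{2n}$ into connectivity of the auxiliary graph $H_\pi$ via the transitive-closure description of the join, followed by the contraction of $\pi$-blocks to vertices of $\GG_\pi$, is exactly the standard argument, and your handling of loops is the right (and only) point requiring care. The paper itself states this lemma as a citation of \cite[Lemma 3.3]{perales2021anti} without reproducing a proof, and your argument matches the expected one.
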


\subsection{Permutations}
\label{ssec:permutations}

We now briefly discuss permutations and its relation with partitions. This will motivate the introduction of non-crossing permutations which we explore in detail in Subsection \ref{ssec:partitioned.permutations}. We can always regard a permutation $\pi\in S_n$ as a partition by considering the cycles (of its cycle decomposition) as the blocks of the partition. When necessary for a permutation $\pi \in S_n$ we will denote by $\Part{\pi}$ the corresponding partition. Biane \cite{biane1997some} showed that every permutation $\pi\in S_n$ satisfies the following inequality 
$$\#(\pi)+\#(\pi^{-1}\gamma_n)\leq n+1,$$
where $\gamma_n=(1,\dots,n)\in S_n$ and $\#(\cdot)$ denotes the number of cycles of the permutation in its cycle decomposition. This inequality can also be expressed in terms of the length function $|\pi| =: n-\#(\pi)$ as
\begin{equation}\label{Aux: Triangle inequality}
|\pi|+|\pi^{-1}\gamma_n|\geq |\gamma_n|.
\end{equation}
This means that the length function $|\cdot|$ satisfies the triangle inequality. Moreover, the equality holds only if $\Part{\pi}\in \NC(n)$. The converse is however not that simple. Given a partition, there might be more than one permutation depending on the cyclic order of the blocks. Thankfully, given a non-crossing partition $\pi$, the only permutation that attains equality in \eqref{Aux: Triangle inequality} is the one whose cycles are the blocks of $\pi$ with the elements arranged in increasing order. With this in mind, throughout this paper we will think of non-crossing partition as a non-crossing permutation and vice-versa with the convention that the cycles of the permutation are described by the blocks of the partition with the elements arranged in increased order. This interpretation is very useful in many contexts and provides a natural definition to the notation of annular non-crossing permutations. More explicitly, the inequality \eqref{Aux: Triangle inequality} is actually true for every pair of permutations $\pi,\gamma\in S_n$. Consequently, a natural question is for a fixed $\gamma$ which permutations $\pi$  attain the equality in \eqref{Aux: Triangle inequality}. This question has been extensively explored in the context offree probability. The case where the fixed permutation $\gamma$ has one cycle was studied in \cite{biane1997some}, when $\gamma=(12\cdots n)$ the permutations $\pi$ satisfying equality are referred as disk non-crossing permutation. The case where $\gamma$ with two cycles was studied in \cite{mingo2004annular}, the $\pi$ that attain equality are called annular non-crossing permutations. For the general case where $\gamma$ has several cycles, we refer to \cite[Section 1.8]{mingo2017free}. In this paper we are specially interested in two cycles and we will introduce it in detail in Subsection \ref{ssec:partitioned.permutations}. 

To finish this subsection let us introduce the Kreweras complement of a non-crossing partition. This concept emerges naturally in our formulas for the cumulants of the product of two free variables.

\begin{definition}[Kreweras complement]
\label{defi.Kreweras.1}
Given $\pi\in \NC(n)$, its \textit{Kreweras comeplemt}, $Kr_n(\pi)\in \NC(n)$, is the non-crossing permutation $\pi^{-1}\gamma_n$, where $\gamma_n=(1,\dots,n)\in S_n$. 
\end{definition}

The map $Kr_n:\NC(n)\to\NC(n)$ is actually a lattice anti-isomorphism of $\NC(n)$. Another important property of this map is that $\#(Kr_n(\pi))=n+1-\#(\pi)$ for all $\pi\in\NC(n)$, thus we actually have that $Kr_n(1_n)=0_n$ and $Kr_n(0_n)=1_n$.

In terms of the topology of the non-crossing partitions, the Kreweras complement map has a deeper interpretation, see (\cite[Exercise 18.25]{NS}). Given $\pi\in \NC(n)$, consider additional numbers $\{1',\dots,n'\}$. Then, the \emph{Kreweras complement of  $\pi$}, is the partition $Kr_n(\pi)\in \{1',\dots,n'\}\cong \NC(n)$ that is the largest element among those $\sigma\in\NC(\{1',\dots,n'\})$ which have the property that $\pi\cup \sigma\in \NC(\{1,1',\dots,n,n'\})$. See, for instance, a graphical representation of this in Figure \ref{Fig:Kreweras_first_order}.

\begin{figure}[h!]
    \centering
    \includegraphics[width=0.5\linewidth]{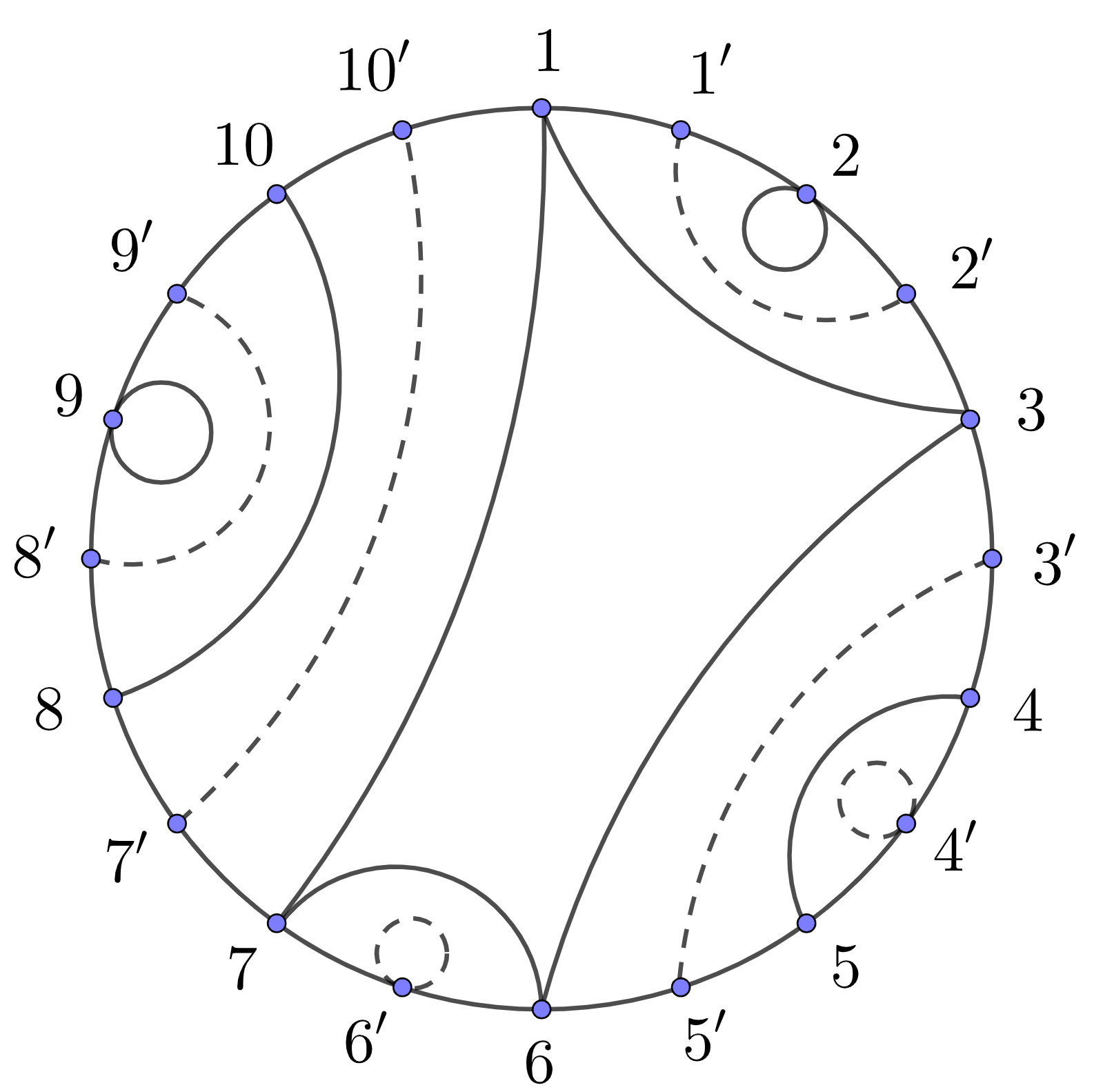}
    \caption{The non-crossing partition $\pi=(1,3,6,7)(4,5)(8,10)(9)$ in solid lines and its Kreweras complement $\pi^{-1}\gamma_{10}=(1,2)(3,5)(4)(6)(7,10)(8,9)$ in dot lines.}
    \label{Fig:Kreweras_first_order}
\end{figure}

This interpretation of the Kreweras complement is the standard definition one finds in the literature concerning non-crossing partitions. Our definition, however, allows us to extend in a more natural way the concept of Kreweras complement of a disk non-crossing partition to the concept of Kreweras complement of an annular non-crossing permutation. We introduce such a concept at the end of Subsection \ref{ssec:partitioned.permutations}.

\subsection{Partitioned permutations}
\label{ssec:partitioned.permutations}

In free probability in order to define the notion of free cumulants and freeness we make use of the set of non-crossing partitions. In second order, to define the notion of second order cumulants and second order freeness we rather look at non-crossing permutations. Let us give a brief introduction to the definitions and notations used in this paper regarding permutations. For a further explanation we refer to \cite{mingo2017free}. Given $n,m\in \mathbb{N}$ we will extensively use the following special permutation
$$\gamma_{n,m}:=(1,\dots, n)(n+1,\dots n+m)\in S_{m+n}.$$
Given $\pi\in S_{m+n}$ we say that $\pi$ is a \textit{non-crossing annular permutation} if 
\begin{enumerate}
    \item $\pi \lor \gamma_{n,m}=1_{m+n}$, and
    \item $\#(\pi)+\#(\pi^{-1}\gamma_{n,m})=m+n$.
\end{enumerate}
Here $\lor$ is defined as in Subsection \ref{ssec:partitions} and the permutation is regarded as a partition by letting the cycles of the permutation to be the blocks of the partition. We denote the set of non-crossing annular permutations as $S_{\NC}(n,m)$. In the spirit of Definition \ref{defi.Kreweras.1} we introduce the Kreweras complement in the annular case.

\begin{definition}
Given $\pi\in S_{\NC}(n,m)$, its \textit{Kreweras complement} is defined as the permutation $Kr_{n,m}(\pi):=\pi^{-1}\gamma_{n,m}$. 
\end{definition}

Notice that $Kr_{n,m}(\pi)$ is again an annular non-crossing permutation. Topologically, it has the same interpretation as in the first order, if we include $\pi$ and $Kr_{n,m}(\pi)$ in the same annulus alternating the numbers (see Figure \ref{Fig:Kreweras_second_order}) then we still get a annular non-crossing permutation. Moreover, given $\pi$, then $Kr_{n,m}(\pi)$ is the largest annular non-crossing permutation with this property. 

\begin{figure}[h!]
    \centering
    \includegraphics[width=0.5\linewidth]{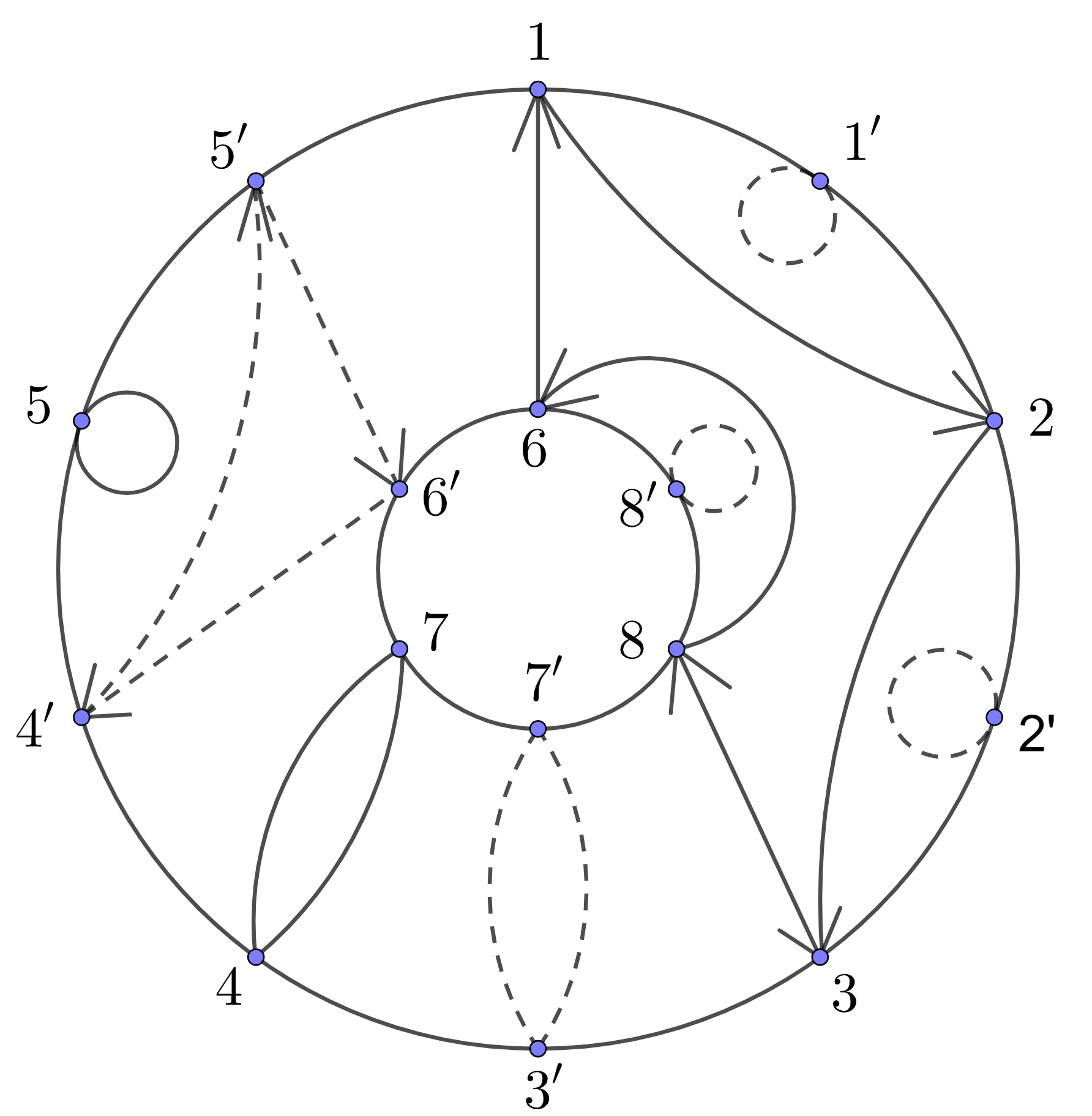}
    \caption{The non-crossing permutation $\pi=(2,3,8,6,1)(4,7)(5)\in S_{\NC}(5,3)$ in solid lines and its Kreweras complement $\pi^{-1}\gamma_{5,3}=(1)(2)(3,7)(4,5,6)(8)$ in dot lines.}
    \label{Fig:Kreweras_second_order}
\end{figure}

A \textit{partitioned permutation} is a pair $(\cU,\pi)\in \PP(n+m)\times S_{n+m}$ where any cycle of $\pi$ is contained in a block of $\cU$. We will focus on the following subset:

\begin{definition}\label{Definition:Non-crossing-pp}
The pair $(\cU,\pi)\in \PS(n+m)$ is a \emph{non-crossing annular partitioned permutation} if it is of one of the two following types.
\begin{enumerate}
    \item \textbf{Type 1:} $\pi\in S_{\NC}(n,m)$ and $\cU=\Part{\pi}.$ We will abuse notation and denote this set simply by $S_{\NC}(n,m)$, associating $(\Part{\pi},\pi)$ with $\pi\in S_{\NC}(n,m)$.
    \item \textbf{Type 2:} $\pi= \pi_1\times \pi_2\in \NC(n)\times \NC(m)$ and every cycle of $\pi$ is a block of $\cU$ except for one block of $\cU$ which is the union of two cycles $A,B\in\pi$ with $A\in\pi_1$ and $B\in\pi_2$. We denote this set by $S_{\NC}^\prime(n,m)$.
\end{enumerate}
We denote by $PS_{\NC}(n,m):=S_{\NC}(n,m)\cup S_{\NC}^\prime(n,m)$ the whole set of non-crossing annular partitioned permutations.
\end{definition}

We should mention that the original definition of non-crossing annular partitioned permutation has a more algebraic motivation, coming from an inequality similar to \eqref{Aux: Triangle inequality}. One first need to define a length function on the set of partitioned permutations that satisfies the triangle inequality. Then a partitioned permutation $(\cU,\pi)$ is non-crossing if we have equality in the triangle inequality and $\cU \lor \Part{\pi^{-1}\gamma_{n,m}}=1_{n,m}$. For more details we refer the reader to \cite[Section 4-5]{collins2007second}. The description of non-crossing annular partitioned permutation that we use here as a Definition \ref{Definition:Non-crossing-pp} was proved in \cite[Proposition 5.11]{collins2007second}.

To finish this subsection let us introduce the following result concerning non-crossing permutations. Given a permutation $\pi\in S_n$ and a set $A\subset [n]$ we let $\pi|_A$ to be the permutation of the elements of $A$ whose cycles are the cycles $C\cap A$ where $C$ is a cycle of $\pi$ and we respect the cyclic order of the elements in $C$. Given two permuations $\pi,\sigma\in S_n$ we say that $\sigma \leq \pi$ if every cycle of $\sigma$ is contained in a cycle of $\pi$ and for each cycle $C$ of $\pi$ the permutation $\sigma|_C$ is non-crossing with respect to $C$, that is
$$\#(\sigma|_C)+\#(\sigma|_C^{-1}C)=|C|+1.$$

\begin{lemma}\label{Lemma: equality if and only if less or equal than} 
Let $\pi,\sigma\in S_{n}$. Then $\pi\leq \sigma$ if and only if $|\pi|+|\pi^{-1}\sigma|=|\sigma|$.
\end{lemma}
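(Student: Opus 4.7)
The plan is to reduce the lemma to a term-by-term application of the equality case of Biane's inequality \eqref{Aux: Triangle inequality}, applied to the restriction of $\pi$ to each cycle of $\sigma$. Write $\sigma = C_1\cdots C_k$ in disjoint cycle form. The key preliminary observation is that, in both directions of the biconditional, every cycle of $\pi$ must be contained in a single cycle of $\sigma$: for the forward implication this is built into the definition of $\pi \leq \sigma$, while for the converse it follows from the Cayley-graph interpretation of $|\cdot|$ that I discuss in the last paragraph.

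Granting this containment, $\pi$ decomposes as a disjoint product $\pi = \pi_1\cdots \pi_k$ with $\pi_i := \pi|_{C_i}$ supported on $C_i$. Since the $\pi_i$'s commute with the $C_j$'s for $i\neq j$, we obtain $\pi^{-1}\sigma = (\pi_1^{-1}C_1)\cdots (\pi_k^{-1}C_k)$ as a disjoint product on $C_1,\dots,C_k$, and counting cycles yields
\[
|\pi| + |\pi^{-1}\sigma| = \sum_{i=1}^{k}\bigl(|\pi_i| + |\pi_i^{-1}C_i|\bigr), \qquad |\sigma| = \sum_{i=1}^{k}(|C_i|-1),
\]
where all lengths on the right-hand sides are computed inside $S_{C_i}$. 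Applying \eqref{Aux: Triangle inequality} inside $S_{C_i}$ to the pair $(\pi_i, C_i)$, each summand on the left is $\geq |C_i|-1$, with equality (by Biane's theorem) if and only if $\pi_i$ is non-crossing with respect to $C_i$. Summing, the global identity $|\pi|+|\pi^{-1}\sigma|=|\sigma|$ holds if and only if equality occurs in every local inequality, which together with the containment is precisely the condition defining $\pi\leq \sigma$.

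The main obstacle is therefore the converse half of the containment claim: assuming $|\pi|+|\pi^{-1}\sigma|=|\sigma|$, every cycle of $\pi$ must lie in a cycle of $\sigma$. For this I would use that $|\cdot|$ is the word-length on $S_n$ with respect to the generating set of all transpositions, together with the basic fact that multiplication by a transposition $(i,j)$ changes the number of cycles by exactly $\pm 1$, namely it joins two cycles when $i,j$ lie in distinct cycles and splits one cycle into two otherwise. The hypothesis produces a geodesic from the identity to $\sigma$ of length $|\sigma|=n-\#(\sigma)$ passing through $\pi$; along such a geodesic the cycle count must drop by exactly one at each step, so every step merges two cycles. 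Consequently, as one moves from $e$ through $\pi$ to $\sigma$, cycles only coarsen, which forces each cycle of $\pi$ to be a subset of some cycle of $\sigma$ and closes the argument.
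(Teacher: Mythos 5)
Your proof is correct. The forward direction is essentially identical to the paper's: decompose $\sigma$ into cycles $C_1\cdots C_k$, restrict $\pi$ to each, and sum the local identities $\#(\pi_i)+\#(\pi_i^{-1}C_i)=|C_i|+1$. The difference is in the converse, which the paper does not prove in-text at all --- it simply cites \cite[Lemma 8]{mingo2009second} --- whereas you supply a self-contained argument. Your two-step converse is sound: the geodesic argument (a length-$|\sigma|$ word of transpositions from $e$ to $\sigma$ passing through $\pi$ must decrease the cycle count at every step, so the cycle partition coarsens monotonically and every cycle of $\pi$ lands inside a cycle of $\sigma$) correctly establishes the containment, and once the containment holds the disjoint-product decomposition turns the global equality into a sum of local inequalities that must each be tight, which is exactly the paper's definition of $\pi\le\sigma$ since ``non-crossing with respect to $C_i$'' is \emph{defined} by the equality $\#(\pi_i)+\#(\pi_i^{-1}C_i)=|C_i|+1$ (so your appeal to Biane's theorem at that point is unnecessary, though harmless). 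What your route buys is independence from the external reference and an explicit picture of why the containment is forced; what the paper's route buys is brevity, at the cost of leaving the reader to unpack the cited lemma.
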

\begin{proof}
Suppose $\pi\leq \sigma$, let $C_1\cdots C_w$ be the cycle decomposition of $\sigma$ and let $\pi_i$ be the restriction of $\pi$ to each $C_i$. Then,
$$\#(\pi_i)+\#(\pi_i^{-1}C_i)=|C_i|+1.$$
Summing over $i$ gives,
$$\#(\pi)+\#(\pi^{-1}\sigma)=n+\#(\sigma),$$
hence $|\pi|+|\pi^{-1}\sigma|=|\sigma|$. The converse follows directly from \cite[Lemma 8]{mingo2009second}.
\end{proof}

\subsection{Free cumulants}
\label{ssec:cumulants}

\noindent 
A \textit{non-commutative probability space} is a pair $(\AA,\varphi)$, where $\AA$ is a unital algebra over $\mathbb{C}$ and $\varphi: \AA \to \cc$ is a linear functional, such that $\varphi(1_\AA) =1$. The \textit{$n$-th multivariate moment} is the multilinear functional $\varphi_n: \AA^n \to \mathbb{C}$, such that $\varphi_n(a_1, \ldots, a_n):=\varphi(a_1\cdots a_n) \in \mathbb{C}$, for elements $a_1, \ldots, a_n \in \AA$. In this framework, Voiculescu's original definition of freeness for random variables explains how to compute the mixed moments in terms of the moments of each variable (see \cite{voiculescu1992free}). Since this definition is out of the scope of this paper, we will restrict our presentation to the characterization of freeness using free cumulants.

\begin{notation}\label{notat:multiplicative}
Given a family of multilinear functionals $\{f_m :\AA^{m}\to \mathbb{C}\}_{m\geq1}$ and a partition $\pi\in\PP(n)$, we define $f_\pi :\AA^{n}\to \mathbb{C}$ to be the map
\[
f_\pi(a_1, \ldots, a_n):=\prod_{V \in \pi}f_{|V|}(a_V), \qquad \forall a_1,\dots,a_n\in\AA,
\] 
where for every block $V:=\{{v_1}, \cdots, {v_k}\}$ of $\pi$ (such that ${v_1} < \cdots < {v_k}$ are in natural order) we use the notation $f_{|V|}(a_V):=f_{k}(a_{v_1},\ldots,a_{v_k})$.
\end{notation}

With this notation in hand we can define the free cumulants.

\begin{definition}[Free cumulants]
Let $(\AA,\varphi)$ be a non-commutative probability space. The \textit{free cumulants} are the family of multilinear functionals $\{\kappa_n : \AA^{n}\to \cc\}_{n\geq 1}$ recursively defined by the following formula:
\begin{equation}
\label{FreeMomCum}
	\varphi_n(a_1, \dots, a_n) =\sum_{\pi \in \NN\CC(n)} \kappa_\pi(a_1, \dots, a_n).
\end{equation} 
If we are just working with one variable, and all the arguments are the same $a_1=a_2= \dots= a_n=a$, then we adopt the simpler notation $\freec{n}{a}:=\kappa_n(a, \dots, a)$ and $\freec{\pi}{a}:=\kappa_\pi(a, \dots, a)$
\end{definition}

\begin{remark}
Cumulants are well defined since the right-hand side of the equation contains only one $\kappa_n$ term and the other terms are monomials of cumulants of smaller sizes. Thus we can recursively define $\kappa_n$ in terms of $\varphi_n$ and $\kappa_{n-1},\kappa_{n-2},\dots, \kappa_1$.
\end{remark}

A \textit{second order non-commutative probability space} is a triple $(\AA,\varphi,\varphi^{2})$, where $(\AA, \varphi)$ is a non-commutative probability space and $\varphi^{2}: \AA\times \AA\rightarrow \mathbb{C}$ is a bilinear functional which is tracial in both arguments and which satisfies
$$\varphi^2(a,1)=0=\varphi^2(1,b) \text{ for all } a,b\in \AA.$$
Similarly as in the first order case, the \textit{$n,m$-multivariate moment} is the multilinear functional $\varphi_{n,m}: \AA^n \times \AA^m \rightarrow \mathbb{C}$, such that $\varphi_{n,m}(a_1,\dots,a_{n+m}) := \varphi^2(a_1\cdots a_n ,a_{n+1}\cdots a_{n+m})\in \mathbb{C}$, for elements $a_1,\dots,a_{n+m}\in \AA$. In this setting, one defines second order freeness as a rule to compute the mixed moments in terms of the first and second order moments of the individual variables (see \cite[Definition 2.5]{mingo2007second}). For our work we will rely on the equivalent definition of second order freeness as the vanishing of the mixed cumulants which we later introduce.

\begin{notation}
Given a family of multilinear functionals $\{f_m : \AA^m \to \mathbb{C}\}_{m\geq 1}$ and $\{f_{n,m}: \AA^n \times \AA^m \to \mathbb{C}\}_{n,m\geq 1}$ and a non-crossing annular partitioned permutation $(\cU,\pi)\in \PS(n+m)$, we define $f_{(\cU,\pi)}: \AA^n \times \AA^m \to \mathbb{C}$ to be the map such that for all $a_1\dots,a_{n+m}\in \AA$
\begin{align*}
f_{(\cU,\pi)}(a_1,\dots, a_{n+m}) &:=
 \prod_{\substack{V\in \pi}}f_{|V|}(a_V), & \text{if } (\cU,\pi)\in S_{\NC}(n,m),\text{ or}  \\
f_{(\cU,\pi)}(a_1,\dots, a_{n+m}) &:= f_{|A|,|B|}(a_{A\cup B})\prod_{\substack{V\in \pi\setminus\{A,B\}}}f_{|V|}(a_V),  & \text{if }  (\cU,\pi)\in S_{NC}^\prime(n,m). 
\end{align*}

Here $A=(v_1,\dots,v_{j_1})$ and $B=(v_{{j_1}+1},\dots, v_{j_1+j_2})$ are the two cycles of $\pi$ contained in the same block of $\cU$ as in definition \ref{Definition:Non-crossing-pp} and we use the notation
$$f_{|A|,|B|}(a_{A\cup B})  := f_{j_1, j_2}(a_{v_1},\dots,a_{v_{j_1+j_2}}),$$
\end{notation}

We are now in place to the define the second order free cumulants in a similar way to the free cumulants.

\begin{definition}[Second order free cumulants]
Let $(\AA, \varphi, \varphi^2)$ be a second order non-commutatve probability space. The \textit{second order free cumulants} are the family of multilinear functionals $\{\kappa_{n,m}: \AA^n \times \AA^m \to \mathbb{C}\}_{n,m\geq 1}$ recursively defined by the following formula:
\begin{eqnarray*}
\varphi_{n,m}(a_1,\dots, a_{n+m}) &=& \sum_{(\cU,\pi)\in \PS_{NC}(n,m)}\kappa_{(\cU,\pi)}(a_1,\dots, a_{n+m}) \\
&=& \sum_{(\cU,\pi)\in S_{\NC}(n,m)}\kappa_{(\cU,\pi)}(a_1,\dots, a_{n+m})+\sum_{(\cU,\pi)\in S_{\NC}^\prime(n,m)}\kappa_{(\cU,\pi)}(a_1,\dots, a_{n+m}).
\end{eqnarray*}
\end{definition}

Notice that in the first term of the last line, the sum runs over pairs $(\Part{\pi},\pi)$ with $\pi\in S_{\NC}(n,m)$, thus 
$$\kappa_{(\Part{\pi},\pi)}(a_1,\dots, a_{n+m})= \kappa_{\pi}(a_1,\dots,a_{n+m})$$ recovers only first order free cumulants which we simply call free cumulants. On the other hand, the second term runs over partitions that have only one block of $\cU$ which is a union of two cycles of $\pi$ and therefore $\kappa_{(\cU,\pi)}(a_1,\dots, \ab a_{n+m})$ is the product of a single second order free cumulant and free cumulants. Further, the unique pair $(1_{n+m},\gamma_{n,m})\in S_{\NC}^{\prime}(n,m)$ contributes the second order free cumulant $\kappa_{n,m}(a_1,\dots,a_{n+m})$ while all the rest of pairs contribute cumulants of smaller sizes. Hence, as in the first order case, the second order cumulants are well defined.

\begin{theorem}[Vanishing of mixed cumulants, see {\cite[Lecture 11]{NS}}]
Given a non-commutative probability space $(\AA,\varphi)$ and $a,b\in\AA$ two random variables. Then the following two statements are equivalent: 
\begin{enumerate}
    \item[1.] $a$ and $b$ are free.
    \item[2.] Every mixed cumulant vanishes. Namely, for every $n\geq 2$ and $a_1,\dots,a_n\in\{a,b\}$ which are not all equal, we have that $\kappa_n(a_1,\dots,a_n)=0$.
\end{enumerate}
\end{theorem}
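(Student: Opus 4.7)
The plan is to prove both directions by induction on $n$, combining the moment-cumulant relation \eqref{FreeMomCum} with Voiculescu's original characterization of freeness (vanishing of alternating centered products). The guiding observation is that, once enough mixed cumulants are known to vanish, only \emph{monochromatic} non-crossing partitions --- those whose blocks contain indices from only one of $\{a,b\}$ --- survive in the moment-cumulant expansion of $\varphi(a_1\cdots a_n)$.

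For (2) $\Rightarrow$ (1), assume all mixed cumulants vanish. Then for any tuple $a_1,\dots,a_n\in\{a,b\}$, the formula \eqref{FreeMomCum} reduces to a sum over monochromatic $\pi\in\NC(n)$. To verify Voiculescu's freeness axiom, specialize to alternating centered inputs: $a_i$ and $a_{i+1}$ come from different variables and $\varphi(a_i)=0$ for every $i$. The combinatorial core is that a monochromatic $\pi\in\NC(n)$ on such an alternating tuple must contain a singleton block, because a non-singleton monochromatic block cannot be an interval (its neighbors have opposite colors), so by non-crossing it must enclose another full sub-block, and iterating on the innermost one forces a singleton. That singleton contributes a factor $\kappa_1(a_i)=\varphi(a_i)=0$, annihilating the whole term, so $\varphi(a_1\cdots a_n)=0$, which is exactly Voiculescu's defining condition.

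For (1) $\Rightarrow$ (2), induct on $n\geq 2$. The base case $n=2$ with mixed arguments is the direct computation $\kappa_2(a_1,a_2)=\varphi(a_1a_2)-\varphi(a_1)\varphi(a_2)=0$, using freeness applied to singleton subalgebras. For the inductive step with a mixed tuple $a_1,\dots,a_n$, isolate $\kappa_n$ in \eqref{FreeMomCum}:
\[
\kappa_n(a_1,\dots,a_n) = \varphi(a_1\cdots a_n) - \sum_{\pi\in\NC(n),\ \pi\neq 1_n} \kappa_\pi(a_1,\dots,a_n).
\]
The induction hypothesis kills every summand except those whose blocks are all monochromatic; since the tuple is mixed, $1_n$ itself is non-monochromatic, so the surviving sum runs precisely over all monochromatic $\pi$. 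It then remains to identify this restricted cumulant sum with $\varphi(a_1\cdots a_n)$: grouping monochromatic partitions by their color-pattern and applying \eqref{FreeMomCum} inside each color class, the sum factors as a product of pure $a$-moments and pure $b$-moments, and Voiculescu's freeness axiom guarantees that this factored product equals the full mixed moment $\varphi(a_1\cdots a_n)$.

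The main obstacle is making this last factorization rigorous: the non-crossing structure of $\pi$ interacts with the cyclic positions of the arguments in the mixed tuple, so one must carefully verify that the sum over monochromatic $\pi\in\NC(n)$ with a prescribed color-pattern yields exactly the product of pure moments produced by iterating Voiculescu's alternating-centered recursion. The cleanest way to carry this out is via the nested parent--child block structure of non-crossing partitions: the outermost block of one color can be peeled off using traciality of $\varphi$ and the centering axiom, recovering the recursion that defines freeness at length $n$ from lengths strictly smaller than $n$.
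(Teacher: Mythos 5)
The paper does not prove this statement; it quotes it from Nica--Speicher with a citation to [NS, Lecture 11], so there is no in-paper proof to compare against. Judged on its own merits, your proposal has a genuine gap in each direction.

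For (2) $\Rightarrow$ (1): freeness of $a$ and $b$ means freeness of the unital subalgebras they generate, so Voiculescu's condition must be verified for alternating products of centered \emph{polynomials} $p_i(a)$, $q_j(b)$, not merely for alternating words in the centered letters $a-\varphi(a)1$ and $b-\varphi(b)1$. Your singleton argument (every non-crossing partition has an interval block; a monochromatic interval block on an alternating word is a singleton; the singleton contributes $\kappa_1(a_i)=\varphi(a_i)=0$) is correct but only treats degree-one polynomials; it does not give, say, $\varphi\bigl((a^2-\varphi(a^2)1)(b-\varphi(b)1)\bigr)=0$, since the word $aab$ is not alternating in the letters. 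The missing ingredient is the products-as-arguments formula (cf.\ Theorem \ref{thm.products.arguments}, or [NS, Theorem 11.12]), which upgrades the vanishing of mixed cumulants in the letters to the vanishing of mixed cumulants with monomial entries, after which the same interval-block argument runs at the level of the generated subalgebras.

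For (1) $\Rightarrow$ (2): the inductive reduction to $\kappa_n=\varphi(a_1\cdots a_n)-\sum_{\pi\ \mathrm{mono}}\kappa_\pi$ is fine, and the identity $\varphi(a_1\cdots a_n)=\sum_{\pi\ \mathrm{mono}}\kappa_\pi$ is true, but your justification of it is not: the monochromatic sum does \emph{not} factor as a product of pure $a$-moments and pure $b$-moments. Already for the word $abab$ the monochromatic non-crossing partitions are $\{1\}\{2\}\{3\}\{4\}$, $\{1,3\}\{2\}\{4\}$ and $\{1\}\{3\}\{2,4\}$, whose cumulant sum equals $\varphi(a^2)\varphi(b)^2+\varphi(a)^2\varphi(b^2)-\varphi(a)^2\varphi(b)^2$, which is not a product of a pure $a$-moment with a pure $b$-moment; so the proposed ``group by color class and factor'' step fails at $n=4$. (Moreover, $\varphi$ is not assumed tracial in a non-commutative probability space, so the peeling argument invoking traciality is not available.) The standard repair is to first show $\kappa_n(\dots,1,\dots)=0$ for $n\geq 2$, use this to center all entries, reduce via products-as-arguments to an alternating centered tuple, and then observe that for such a tuple $\varphi(a_1\cdots a_n)=0$ by freeness while every $\pi\neq 1_n$ contributes $0$ by the induction hypothesis together with the singleton argument, forcing $\kappa_n=0$.
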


\begin{theorem}[Vanishing of mixed second order cumulants, see {\cite[Section 7.3]{collins2007second}}]
Given a second order non-commutative probability space $(\AA, \varphi, \varphi^2)$ and $a,b\in \AA$ two random variables. Then the following statements are equivalent:
\begin{enumerate}
    \item[1. ] $a$ and $b$ are second order free.
    \item[2. ] Every mixed cumulant vanishes. Namely, for every $n\geq 2$ and $a_1,\dots,a_n\in\{a,b\}$ which are not all equal, we have that $\kappa_n(a_1,\dots,a_n)=0$ and for every $n,m\geq 1$ and $a_1,\dots,a_{n+m}\in\{a,b\}$ which are not all equal, we have that $\kappa_{n,m}(a_1,\dots,a_{n+m})=0$.
\end{enumerate}
\end{theorem}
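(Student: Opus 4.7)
The plan is to prove both implications via M\"obius inversion on the poset $\PS_{\NC}(n,m)$, building on the classical first-order analogue. The first-order assertion (that $\kappa_n$ vanishes on mixed arguments iff $a,b$ are free) is the standard characterization of freeness due to Speicher \cite{speicher1994multiplicative}, so I would quote it and focus on the genuinely second-order content: the statement about $\kappa_{n,m}$.

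For the implication $(1)\Rightarrow(2)$, I would induct on $n+m$. Using the definition of second order freeness from \cite{mingo2007second}, the mixed second-order moment $\varphi_{n,m}(a_{i_1},\dots,a_{i_{n+m}})$ with $i_k\in\{a,b\}$ factors as an explicit sum of products of $\varphi$- and $\varphi^2$-evaluations on the alternating pure-colour sub-words. Plugging this factorization into the moment-cumulant formula, isolating the unique top term $\kappa_{n,m}$ coming from $(1_{n+m},\gamma_{n,m})$, and using the inductive hypothesis to kill all strictly smaller mixed cumulants, the remaining sum over $\PS_{\NC}(n,m)$ should collapse to precisely the factorized expression predicted by freeness. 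Matching the two sides forces $\kappa_{n,m}=0$ on mixed inputs.

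For $(2)\Rightarrow(1)$, I would reverse the argument: assume all mixed cumulants vanish and expand $\varphi_{n,m}$ with the moment-cumulant formula. Only those $(\cU,\pi)\in \PS_{\NC}(n,m)$ survive whose every block is monochromatic in the letters $a,b$. Partitioning the surviving $(\cU,\pi)$'s by which cycles sit in the $a$-sub-word versus the $b$-sub-word, the sum should factor into a product of contributions of each colour; re-summing via the first-order moment-cumulant formula on each colour, and via the annular relation between $\NC(n)\times\NC(m)$ and $\PS_{\NC}(n,m)$, one should recover the factorization characterizing second order freeness.

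The main obstacle is the Type~2 partitioned permutations in Definition \ref{Definition:Non-crossing-pp}, where a single block of $\cU$ merges one cycle from the $n$-cycle of $\gamma_{n,m}$ with one from the $m$-cycle. Verifying that such ``bridging'' blocks correctly account for the $\varphi^2$-term in the freeness factorization, and do not produce spurious cross-contributions when mixed, requires a careful annular Kreweras complement analysis together with Lemma \ref{Lemma: equality if and only if less or equal than} to control when $|\pi|+|\pi^{-1}\gamma_{n,m}|=|\gamma_{n,m}|$. In short, the combinatorial subtlety is not the vanishing itself but the reassembly of the hybrid sum over $S_{\NC}(n,m)\cup S'_{\NC}(n,m)$ into the product structure dictated by the two colour classes, and this is where the bulk of the argument in \cite[Section 7.3]{collins2007second} lies.
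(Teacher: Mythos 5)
You should first note that the paper does not prove this theorem at all: it is quoted verbatim from \cite[Section 7.3]{collins2007second} and used as a black box, so there is no internal proof to measure your argument against. Judged on its own terms, what you have written is a roadmap rather than a proof. Every decisive step is asserted in the conditional: the sum ``should collapse,'' the surviving terms ``should factor,'' one ``should recover'' the freeness factorization. You then explicitly concede that the reassembly of the hybrid sum over $S_{\NC}(n,m)\cup S'_{\NC}(n,m)$ into the product structure --- which you correctly identify as the crux --- is ``where the bulk of the argument in \cite{collins2007second} lies.'' Deferring the central combinatorial verification to the very reference whose result you are trying to reprove means the proposal does not establish the theorem.

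Beyond incompleteness, one step as described would not go through as stated. In the direction $(1)\Rightarrow(2)$ you say the mixed moment $\varphi_{n,m}$ ``factors as an explicit sum of products'' and that ``matching the two sides forces $\kappa_{n,m}=0$.'' But the defining formula for second order freeness (for $\varphi^2$ of two centered alternating words) is not a product: it is a sum over cyclic alignments of the two sub-words --- essentially the spoke-diagram pairings that reappear in Section 7 of this paper --- plus lower-order corrections. Matching that expression against the partitioned-permutation expansion of $\varphi_{n,m}$ is exactly the hard identity, and it is not a term-by-term comparison: one must show that the Type~1 and Type~2 contributions with monochromatic blocks already exhaust the freeness formula, so that the leftover $(1_{n+m},\gamma_{n,m})$ term vanishes. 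Similarly, in $(2)\Rightarrow(1)$ the claim that the monochromatic sum ``factors into a product of contributions of each colour'' ignores the through-cycles of Type~1 elements of $S_{\NC}(n,m)$, which are monochromatic but visit both circles and hence do not split along the $a$/$b$ colour classes in the naive way. These are not fatal objections to the strategy --- it is essentially the strategy of \cite{collins2007second} --- but they are precisely the points a proof must settle, and the proposal settles none of them.
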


Finally, when working with cumulants whose entries have products of the underlying algebra $\AA$, there is an efficient formula that allows us to write this cumulant as a sum over cumulants with more entries, where the products are now separated into different entries. The general formula was found in \cite{krawczyk2000combinatorics} and is known as the products as arguments formula. Here we will just use a particular case.

\begin{theorem}[Products as arguments formula]
\label{thm.products.arguments}
Let $(\AA,\varphi)$ be a non-commutative probability space and fix $n\in\nn$. Let $a_1,a_2,\dots ,a_{2n}\in\AA$ be random variables, and consider $I_{2n}:=\{\{1,2\},\{3,4\},\dots,\{2n-1,2n\}\}$ the unique interval pair partition. Then we have that
\begin{equation}
\label{eq.products.arguments}
    \kappa_{n}(a_1a_2,a_3a_4,\dots,a_{2n-1}a_{2n})=\sum_{\substack{\pi\in\NN\CC(2n)\\ \pi \lor I_{2n}=1_{2n} }} \kappa_{\pi}(a_1,a_2,a_3,a_4,\dots,a_{2n-1},a_{2n}).
\end{equation}
\end{theorem}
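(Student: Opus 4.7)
The plan is to derive the formula from the more general identity
\begin{equation*}
\kappa_\sigma(a_1 a_2, a_3 a_4, \ldots, a_{2n-1}a_{2n}) \;=\; \sum_{\substack{\pi\in \NC(2n) \\ \pi \lor I_{2n} = \widehat{\sigma}}} \kappa_\pi(a_1, a_2, \ldots, a_{2n}), \qquad \sigma \in \NC(n),
\end{equation*}
where $\widehat{\sigma}\in\NC(2n)$ is the \emph{fattened} partition obtained by replacing each $i\in[n]$ with the pair $\{2i-1,2i\}$. The theorem is the case $\sigma=1_n$, for which $\widehat{1_n}=1_{2n}$. A preliminary observation is that $\sigma\mapsto\widehat{\sigma}$ is a lattice isomorphism from $\NC(n)$ onto the interval $[I_{2n},1_{2n}]\subset \NC(2n)$, and since $I_{2n}$ is an interval partition the join $\pi\lor I_{2n}$ computed in $\PP(2n)$ coincides with $\pi\lor_{\NC} I_{2n}$ and lies in this interval, so it is $\widehat{\sigma}$ for a unique $\sigma\in\NC(n)$.

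To prove the identity, I would show that both sides have equal truncated sums over $\{\sigma'\leq\sigma\}$ in $\NC(n)$, for every $\sigma\in\NC(n)$. Setting $b_i := a_{2i-1}a_{2i}$ and applying the moment-cumulant relation on $\NC(n)$ gives
\begin{equation*}
\sum_{\sigma'\leq\sigma}\kappa_{\sigma'}(a_1 a_2,\ldots,a_{2n-1}a_{2n}) \;=\; \varphi_\sigma(b_1,\ldots,b_n) \;=\; \varphi_{\widehat{\sigma}}(a_1,\ldots,a_{2n}),
\end{equation*}
where the last equality follows block by block from $\varphi(b_{v_1}\cdots b_{v_k}) = \varphi(a_{2v_1-1}a_{2v_1}\cdots a_{2v_k-1}a_{2v_k})$. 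On the other hand, since $I_{2n}\leq\widehat{\sigma'}$ for every $\sigma'\in\NC(n)$, the range of $\pi$ appearing when one unions the constraints ``$\pi\lor I_{2n}=\widehat{\sigma'}$'' over $\sigma'\leq\sigma$ collapses to the single constraint $\pi\leq\widehat{\sigma}$, so
\begin{equation*}
\sum_{\sigma'\leq\sigma}\sum_{\substack{\pi\in\NC(2n)\\\pi\lor I_{2n}=\widehat{\sigma'}}}\kappa_\pi(a_1,\ldots,a_{2n}) \;=\; \sum_{\pi\leq\widehat{\sigma}}\kappa_\pi(a_1,\ldots,a_{2n}) \;=\; \varphi_{\widehat{\sigma}}(a_1,\ldots,a_{2n})
\end{equation*}
by the moment-cumulant relation on $\NC(2n)$. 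The two truncated sums therefore agree for every $\sigma\in\NC(n)$, and Möbius inversion on $\NC(n)$ forces the summands themselves to agree for every $\sigma$, proving the multiplicative identity and in particular the theorem at $\sigma=1_n$.

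The main obstacle I foresee is the equivalence ``$\pi\lor I_{2n}=\widehat{\sigma'}$ for some $\sigma'\leq\sigma$'' $\Leftrightarrow$ ``$\pi\leq\widehat{\sigma}$''. The forward direction uses $I_{2n}\leq\widehat{\sigma}$ and monotonicity of the join, while the reverse requires knowing that $\pi\lor I_{2n}$ always lies in the interval $[I_{2n},1_{2n}]$, a fact that rests on non-crossing-ness and the interval-partition structure of $I_{2n}$ (compare Lemma \ref{Lemma.connected}, which is the ``top'' single-block instance of this phenomenon). Once this identification of the range of $\pi\lor I_{2n}$ is secured, the rest of the proof is routine lattice-theoretic bookkeeping on $\NC(n)$.
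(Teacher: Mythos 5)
Your proposal is correct, but note that the paper itself gives no proof of this statement: it is quoted as a known result of Krawczyk and Speicher \cite{krawczyk2000combinatorics}, so there is no internal argument to compare against. What you have written is essentially the standard proof of that formula (cf.\ \cite[Theorem 11.12]{NS}): one proves the multiplicative version over all $\sigma\in\NC(n)$ by checking that the two sides have equal sums over $\{\sigma'\leq\sigma\}$ and then invoking M\"obius inversion on $\NC(n)$. The two points you flag as potential obstacles are both genuine but standard: the identification of the interval $[I_{2n},1_{2n}]$ in $\NC(2n)$ with $\NC(n)$ via fattening, and the fact that $\pi\lor I_{2n}=\pi\lor_{\NC}I_{2n}$ lands in that interval because $I_{2n}$ is an interval partition (the paper itself cites \cite[Exercise 9.43]{NS} for this). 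With those in place, the equivalence of ``$\pi\lor I_{2n}=\widehat{\sigma'}$ for some $\sigma'\leq\sigma$'' with ``$\pi\leq\widehat{\sigma}$'' and the disjointness of the corresponding fibers are exactly as you describe, so the argument closes without gaps.
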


For the second order free cumulants, a formula for the cumulants of products was also derived in \cite{mingo2009second}. Here we will use a particular case.

\begin{theorem}[Product as argument for second order free cumulants]
Let $(\AA, \varphi, \varphi^2)$ be a second order non-commutative probability space and fix $n,m\in \mathbb{N}$. Let $a_1,\dots,a_{2n+2m}\in \AA$ be random variables and consider the permutation $\gammanm:=(1,\dots, 2n)(2n+1,\dots, 2n+2m)\in S_{2n+2m}$. Then we have that
\begin{equation}
\label{eq.products.arguments.2}
\kappa_{n,m}(a_1a_2,\dots,a_{2n+2m-1}a_{2n+2m}) = \sum_{(\cU,\pi)\in \PS_{NC}(2n,2m)}\kappa_{(\cU,\pi)}(a_1,\dots,a_{2n+2m}),\end{equation}
where the sum is over all pairs $(\cU,\pi)\in \PS_{NC}(2n,2m)$ such that the cycles of $\pi^{-1}\gammanm$ separates even numbers, that is, no cycle of $\pi^{-1}\gammanm$ has two even numbers.
\end{theorem}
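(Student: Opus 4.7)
The plan is to derive this formula as the pair-grouped specialization of the general products-as-arguments theorem for second order cumulants from \cite{mingo2009second}, paralleling how Theorem \ref{thm.products.arguments} specializes its first-order analogue to products of pairs. Since the statement is asserted to be a particular case of a known result, my derivation would aim to make the combinatorial condition ``$\pi^{-1}\gammanm$ separates even numbers'' emerge explicitly from the general machinery.

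Concretely, I would substitute $b_i := a_{2i-1}a_{2i}$ into the second-order moment-cumulant relation. Since $\varphi_{n,m}(b_1,\ldots,b_{n+m}) = \varphi^2(b_1\cdots b_n, b_{n+1}\cdots b_{n+m}) = \varphi_{2n,2m}(a_1,\ldots,a_{2n+2m})$, expanding both sides by moment-cumulant yields the global identity
\begin{equation*}
\sum_{(\cV,\sigma)\in\PS_{\NC}(n,m)}\kappa_{(\cV,\sigma)}(a_1a_2,\ldots,a_{2n+2m-1}a_{2n+2m}) = \sum_{(\cU,\pi)\in\PS_{\NC}(2n,2m)}\kappa_{(\cU,\pi)}(a_1,\ldots,a_{2n+2m}).
\end{equation*}
Proceeding by induction on $n+m$, I would isolate on the left the summand with $(\cV,\sigma)=(1_{n+m},\gammanm)$, which contributes precisely $\kappa_{n,m}(a_1a_2,\ldots,a_{2n+2m-1}a_{2n+2m})$, and expand the remaining strictly smaller summands using the induction hypothesis together with the first-order formula \eqref{eq.products.arguments} applied factor by factor within each cycle of $\sigma$.

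The main obstacle, and the combinatorial heart of the argument, is identifying which $(\cU,\pi)\in\PS_{\NC}(2n,2m)$ on the right survive after all cancellations. In the first-order version the surviving $\pi$ are exactly those with $\pi \vee I_{2n} = 1_{2n}$, i.e., those with $\GG_\pi$ connected, by Lemma \ref{Lemma.connected}. For the annular partitioned case the key lemma would count, for each fixed $(\cU,\pi)$, the number of $(\cV,\sigma)\in\PS_{\NC}(n,m)$ whose expansion produces $\kappa_{(\cU,\pi)}(a_1,\ldots,a_{2n+2m})$ as a summand, and show that this count equals $1$ exactly when no cycle of $\pi^{-1}\gammanm$ contains two even elements, and $0$ otherwise. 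I would establish this via a cycle-counting argument using Lemma \ref{Lemma: equality if and only if less or equal than}, treating the Type 1 and Type 2 strata of $\PS_{\NC}(2n,2m)$ separately but by the same mechanism: the Type 1 stratum is handled by pulling back the first-order condition to each cycle of $\gammanm$, while the Type 2 stratum accounts for the genuinely second-order contributions where the merged block of $\cU$ bridges the two cycles of $\gammanm$.
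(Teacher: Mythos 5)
The paper does not actually prove this statement: it is imported verbatim as a particular case of the general products-as-arguments formula for second order cumulants established in \cite{mingo2009second}, so the only thing left to check is that the hypothesis of that general theorem, specialized to the grouping of $[2n+2m]$ into consecutive pairs, becomes the condition that $\pi^{-1}\gammanm$ separates the even numbers. Your opening sentence points in exactly that direction, but the rest of your proposal abandons it in favour of an independent inductive derivation, and that derivation has a genuine hole: the ``key lemma'' you invoke --- that each $(\cU,\pi)\in\PS_{\NC}(2n,2m)$ is produced with the right multiplicity ($1$ or $0$ according to whether $\pi^{-1}\gammanm$ separates even) by the expansion of the lower-order terms --- is precisely the entire content of the theorem. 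Asserting that it ``would'' follow from a cycle-counting argument via Lemma \ref{Lemma: equality if and only if less or equal than} is not a proof; in \cite{mingo2009second} this multiplicity statement is exactly where all the work lies, and it requires controlling the join of partitioned permutations (in particular, showing that a given $(\cU,\pi)$ can arise from the expansion of at most one $(\cV,\sigma)$), not merely the length function.

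There is also a concrete error in how you split the analysis between the two strata. For Type 2 elements one has $\pi=\pi_1\times\pi_2\in\NC(2n)\times\NC(2m)$, so $\pi^{-1}\gammanm$ factors over the two circles and the separates-even condition is literally the first-order condition $\pi_i\vee I=1$ on each disc; this is the stratum that reduces to the first-order case circle by circle (with the merged block of $\cU$ supplying the single second order cumulant, as you correctly say). For Type 1 elements, $\pi\in S_{\NC}(2n,2m)$ is genuinely annular, has a through-cycle, and does not restrict to the cycles of $\gammanm$; moreover the separates-even condition there is strictly stronger than connectivity of $\GG_\pi$ --- this is the whole point of Proposition \ref{Proposition: separates implies connected} and of Section \ref{Section: detail study of the sets}. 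So the per-circle ``pullback of the first-order condition'' that you assign to the Type 1 stratum does not exist, and the hardest part of the combinatorics sits exactly where your sketch declares it routine.
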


One may notice that the conditions in \eqref{eq.products.arguments} and \eqref{eq.products.arguments.2} look different. However, let us recall that any non-crossing partition can be regarded as a permutation. In this setting, it turns out that the condition $\pi \lor I_{2n}=1_{2n}$ is equivalent to $\pi^{-1}\gamma_{2n}$ separates even numbers where $\gamma_{2n}=(1,\dots,2n)\in S_n$, see \cite[Lemma 14]{mingo2009second}. This explains how both conditions are related.

\section{Anti-commutator}
\label{Section: proof}

The goal of this section is to prove Theorem \ref{Thm.anticommutator.main.2}. The approach is to adapt the proof of formula \eqref{formula.anticommutator.main} to the second order setting. Thus, throughout this section it is important to keep in mind that the proof of \eqref{formula.anticommutator.main} required the use of products as arguments formula and the multilinearity of the cumulants, and also relies on two crucial properties

\begin{enumerate}[label=(P\arabic*)]
\item $\pi\lor I_{2n}=1_{2n}$ if and only if $\GG_\pi$ is connected.\label{P1}
\item If there exist $\varepsilon\in \{1,*\}^n$ such that $\{A(\varepsilon),B(\varepsilon)\}\geq \pi$, then $\GG_\pi$ is bipartite. \label{P2}
\end{enumerate}

Through this section we are going to fix two free random variables $a,b$, and natural numbers $n,m\in\nn$. Our ultimate goal is to describe the second order $(n,m)$-cumulant of the anti-commutator, $\freec{n,m}{ab+ba}$, in terms of the cumulants of $a$ and $b$. By multilinearity of the cumulants this amounts to study $(n,m)$-cumulants with entries given by $ab$ or $ba$. To keep track of this kind of expressions we use the notation introduced in \cite[Notation 3.1]{perales2021anti}.
\begin{notation}
\label{nota.epsilons}
Given non-commutative variables $a, b$, we use the notation $(ab)^1:=ab$ and $(ab)^{*}:=ba$. Given a $n$-tuple $\varepsilon \in \{1,*\}^n$, we denote by $(ab)^\varepsilon:=((ab)^{\varepsilon(1)},(ab)^{\varepsilon(2)},\dots,(ab)^{\varepsilon(n)})$ the $n$-tuple with entries $ab$ or $ba$ dictated by the entries of $\varepsilon$.  Furthermore, we will denote by $(a,b)^{\varepsilon}$ the $2n$-tuple obtained from separating the $a$'s from the $b$'s in the $n$-tuple $(ab)^\varepsilon$.  To keep track of the entries in $(a,b)^{\varepsilon}$ that contain an $a$ we use the notation
\[
A(\varepsilon):=\{ 2i-1 |1\leq i\leq n,\ \varepsilon(i)=1\}\cup \{ 2i |1\leq i\leq n,\ \varepsilon(i)=*\}.
\]
Then the entries in $(a,b)^{\varepsilon}$ that contain a $b$ are given by
$B(\varepsilon):=[2n]\backslash A(\varepsilon)$.

For example, if we consider $\varepsilon=(1,*,*,1,*,1) \in \{1,*\}^6$, then $(ab)^\varepsilon=(ab,ab,ba,ab,ba,ab)$ and if we split each entry we get 
\[
(a,b)^{\varepsilon}=(a,b,a,b,b,a,a,b,b,a,a,b).
\]
This means that
\[
A(\varepsilon)=\{1,3,6,7,10,11\},\qquad\text{and}\qquad B(\varepsilon)=\{2,4,5,8,9,12\}.
\]
\end{notation}

Using the products as entries formula together with the vanishing of mixed cumulants, we can easily rewrite the $(n,m)$-cumulant of the anti-commutator.

\begin{proposition}
\label{prop.basic.anticomm.2}
The second order free cumulants of the anti-commutator $ab+ba$ of two second order free random variables $a,b$ satisfy the following formula for all $n,m\in \nn$:
\begin{equation}
\label{eq.rephrase.2}
\freec{n,m}{ab+ba} =\sum_{\substack{(\cU,\pi)\in \PS_{NC}(2n,2m),\\ \pi^{-1}\gammanm \text{ sep. even}}} \sum_{\substack{\varepsilon\in \{1,*\}^{n+m}\\ \{A(\varepsilon),B(\varepsilon)\}\geq \cU}} \kappa_{(\cU,\pi)}((a,b)^{\varepsilon}). 
\end{equation}
Here, we use the notation $\gammanm:=(1,\dots, 2n)(2n+1,\dots, 2n+2m)\in S_{2n+2m}$.
\end{proposition}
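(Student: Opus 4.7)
The plan is to mimic, in the second order setting, the standard derivation of \eqref{formula.anticommutator.main}: first expand by multilinearity, then apply the products-as-arguments formula, and finally use the vanishing of mixed (second order) cumulants.

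First, since $\varphi^2$ and hence each $\kappa_{n,m}$ is multilinear in its $n+m$ entries, expanding each argument $ab+ba = (ab)^{1} + (ab)^{*}$ gives
\begin{equation*}
\freec{n,m}{ab+ba} \;=\; \sum_{\varepsilon \in \{1,*\}^{n+m}} \kappa_{n,m}\bigl((ab)^{\varepsilon(1)}, (ab)^{\varepsilon(2)}, \ldots, (ab)^{\varepsilon(n+m)}\bigr).
\end{equation*}
Each entry is itself a product of two elements of $\AA$, and the ordering of the single letters $a,b$ across all $n+m$ entries is precisely the $2(n+m)$-tuple $(a,b)^{\varepsilon}$ from Notation \ref{nota.epsilons}.

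Next, I would apply the second order products-as-arguments formula \eqref{eq.products.arguments.2} to each inner cumulant. For a fixed $\varepsilon$, this rewrites $\kappa_{n,m}\bigl((ab)^{\varepsilon}\bigr)$ as
\begin{equation*}
\kappa_{n,m}\bigl((ab)^{\varepsilon}\bigr) \;=\; \sum_{\substack{(\cU,\pi)\in \PS_{NC}(2n,2m) \\ \pi^{-1}\gammanm \text{ sep. even}}} \kappa_{(\cU,\pi)}\bigl((a,b)^{\varepsilon}\bigr).
\end{equation*}
Substituting back and exchanging the order of summation yields
\begin{equation*}
\freec{n,m}{ab+ba} \;=\; \sum_{\substack{(\cU,\pi)\in \PS_{NC}(2n,2m) \\ \pi^{-1}\gammanm \text{ sep. even}}} \sum_{\varepsilon \in \{1,*\}^{n+m}} \kappa_{(\cU,\pi)}\bigl((a,b)^{\varepsilon}\bigr).
\end{equation*}

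The final step is to restrict the inner sum to those $\varepsilon$ satisfying $\{A(\varepsilon),B(\varepsilon)\}\geq \cU$, using the vanishing of mixed second order cumulants. Recall that $A(\varepsilon)$ indexes the positions in $(a,b)^{\varepsilon}$ occupied by $a$ and $B(\varepsilon)$ those occupied by $b$. The functional $\kappa_{(\cU,\pi)}((a,b)^{\varepsilon})$ is a product of first order cumulants over the blocks of $\cU$ that contain a single cycle of $\pi$, together with exactly one second order cumulant over the distinguished block when $(\cU,\pi)\in S_{\NC}^\prime(2n,2m)$. Since $a$ and $b$ are second order free, each factor vanishes unless all the arguments inside it are the same variable, i.e.\ unless every block of $\cU$ is entirely contained in either $A(\varepsilon)$ or $B(\varepsilon)$. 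This is exactly the condition $\{A(\varepsilon),B(\varepsilon)\}\geq \cU$, giving \eqref{eq.rephrase.2}.

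The only subtlety, and where I would be careful, is the verification of this last step for blocks of $\cU$ of the second type: one needs that vanishing of mixed second order cumulants applies to $\kappa_{|A|,|B|}$ just as it does to $\kappa_n$, which is precisely the content of the vanishing-of-mixed-cumulants theorem cited from \cite{collins2007second}. Once this is in place the rest is a routine manipulation of sums, and no further combinatorial input is required at this stage.
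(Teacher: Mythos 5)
Your proposal is correct and follows essentially the same route as the paper: expand $ab+ba$ by multilinearity over $\varepsilon\in\{1,*\}^{n+m}$, apply the second order products-as-arguments formula \eqref{eq.products.arguments.2}, swap the order of summation, and invoke the vanishing of mixed first and second order cumulants to impose the condition $\{A(\varepsilon),B(\varepsilon)\}\geq \cU$. Your closing remark about checking the vanishing also for the distinguished second-order block is exactly the point the paper handles by citing the vanishing-of-mixed-second-order-cumulants theorem.
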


\begin{proof}
For a fixed $\varepsilon\in \{1,*\}^{m+n}$, products as arguments formula for second order cumulants \eqref{eq.products.arguments.2}  asserts that
\[
\kappa_n((ab)^{\varepsilon})=\sum_{\substack{(\cU,\pi)\in \PS_{NC}(2n,2m),\\ \pi^{-1}\gammanm \text{ sep. even}}} \kappa_{(\cU,\pi)}((a,b)^{\varepsilon}),
\]
where $\pi^{-1}\gammanm$ sep. even means that there are no two even numbers in a cycle of $\pi^{-1}\gammanm$.

Therefore, if we sum over all possible $\varepsilon\in \{1,*\}^{n}$ we get that 

\begin{align*}
\kappa_{n,m}(ab+ba,\dots,ab+ba) &=\sum_{\varepsilon\in \{1,*\}^{n+m}} \sum_{\substack{(\cU,\pi)\in \PS_{NC}(2n,2m),\\ \pi^{-1}\gammanm \text{ sep. even}}} \kappa_{(\cU,\pi)}((a,b)^{\varepsilon}) \nonumber \\
&=\sum_{\substack{(\cU,\pi)\in \PS_{NC}(2n,2m),\\ \pi^{-1}\gammanm \text{ sep. even}}} \sum_{\varepsilon\in \{1,*\}^{n+m}}  \kappa_{(\cU,\pi)}((a,b)^{\varepsilon}),
\end{align*}

where in the second equality we just changed the order of the sums. Finally, since $a$ and $b$ are second order free, every mixed free cumulant  and second order free cumulant will vanish. Thus we require that $\{A(\varepsilon),B(\varepsilon)\}\geq \cU$ in order to get $\kappa_{(\cU,\pi)}((a,b)^{\varepsilon})\neq 0$. 
\end{proof}

A key idea is that the right-hand side of \eqref{eq.rephrase.2} can be greatly simplified by observing that there are very few $\varepsilon$ satisfying $\{A(\varepsilon),B(\varepsilon)\}\geq\cU$ for a fixed partitioned permutation $(\cU,\pi)$. 

\begin{remark}
An analogous version to Proposition \ref{prop.basic.anticomm.2} was presented in \cite[Proposition 3.2]{perales2021anti}. In the first order case, the condition that $\pi^{-1}\gamma_{2n}$ separates even is equivalent to $\pi \lor I_{2n}=1_{2n}$ and thus equivalent to the graph $\GG_\pi$ being connected, this is property \ref{P1} mentioned at the beginning of the section. Naturally, one would expect to have the same property in second order case. However, the set of permutations $\pi\in S_{\NC}(2n,2m)$ satisfying that $\pi^{-1}\gammanm$ separates even is a subset of the permutations $\pi$ such that the graph $\GG_\pi$ is connected. This is, not every permutation $\pi$ with connected graph $\GG_\pi$ satisfies that $\pi^{-1}\gammanm$ separates even. On the other hand, the case when $\pi\in \NC(2n)\times \NC(2m)$ will be easily checked with the help of the results derived for first order case in \cite{perales2021anti}. In this sense, the property \ref{P1} is more involved in second order case. In Section \ref{Section: detail study of the sets}  we explore in more detail the subset of connected graphs $\GG_\pi$ that satisfy the condition $\pi^{-1}\gammanm$ separates even for $\pi\in S_{\NC}(2n,2m)$.
\end{remark}

Unless otherwise stated, in the remaining of the paper we will use the notation
\begin{align}
 I&:=\{\{1,2\},\dots, \{2n+2m-1,2n+2m\}\}\in\cP(2n+2m) \label{eq:not.I} \\ 
 \boldsymbol{1}&:=\{\{1,2,\dots,2n+2m-1,2n+2m\}\}\in\cP(2n+2m)  \label{eq:not.1} \\
\gamma &:= (1,\dots,2n)(2n+1,\dots,2n+2m)\in S_{2n+2m}. \label{eq:not.gamma}
\end{align}
 Notice that $\boldsymbol{1}$ is the maximum partition in $\cP(2n+2m)$. We also let 
 $$[2n]=: \{1,\dots,2n\} \qquad  \text{and} \qquad [2m]=: \{2n+1,\dots,2n+2m\}.$$

\begin{proposition}\label{Proposition: separates implies connected}
If $\pi\in S_{\NC}(2n,2m)$ is such that $\pi^{-1}\gammanm$ separates even elements, then $\Part{\pi}\vee I=\boldsymbol{1}$. Moreover, Lemma \ref{Lemma.connected} implies that $\GG_\pi$ is connected.
\end{proposition}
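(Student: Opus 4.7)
The plan is to argue by contradiction. Assume $\Part{\pi}\vee I\neq \boldsymbol{1}$. Since $I\leq\Part{\gamma}$ (each pair $\{2k-1,2k\}$ lies inside a single cycle of $\gamma$) and $\Part{\pi}\vee\Part{\gamma}=\boldsymbol{1}$ because $\pi\in S_{\NC}(2n,2m)$, the join $\Part{\pi}\vee I$ is a non-trivial proper partition and admits a proper non-empty block $A$; by construction $A$ is simultaneously a union of cycles of $\pi$ and a union of $I$-pairs.

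The first step is to rule out $\gamma(A)=A$. If it held, then $A$ would be a union of cycles of $\gamma$, so $A\in\{[2n],[2m]\}$; but then every cycle of $\pi$ would lie entirely in $[2n]$ or in $[2m]$, giving $\Part{\pi}\vee\Part{\gamma}\leq\{[2n],[2m]\}\neq\boldsymbol{1}$ and contradicting $\pi\in S_{\NC}(2n,2m)$. Next, I would pinpoint where $\gamma$ fails to preserve $A$: for any odd $i=2k-1$, $\gamma(i)=2k$ is $I$-paired with $i$, and the $I$-closure of $A$ forces $\gamma(i)\in A$. Hence every ``border crossing'' of $A$ under $\gamma$ must happen at an even position, and by bijectivity both sets
\[
B=\{2k\in A:\gamma(2k)\in A^c\},\qquad B'=\{2k\in A^c:\gamma(2k)\in A\}
\]
are non-empty (and consist exclusively of even elements), with $|B|=|B'|$.

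The key step, where the hypothesis on $\sigma:=\pi^{-1}\gamma$ enters, is to transfer this boundary behavior from $\gamma$ to $\sigma$. Because $A$ and $A^c$ are $\pi$-invariant, $\pi^{-1}$ also preserves them, so for every $i$ we have $\sigma(i)\in A\iff \gamma(i)\in A$. Starting from any $2k\in B$, the $\sigma$-orbit of $2k$ immediately leaves $A$ and must later re-enter before closing up, producing an element of $B'$ in the same $\sigma$-cycle. Both $2k$ and this re-entry element are distinct and even, contradicting the hypothesis that $\sigma$ separates even. The equality $\Part{\pi}\vee I=\boldsymbol{1}$ follows, and the connectedness of $\GG_\pi$ is then immediate from Lemma \ref{Lemma.connected}.

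The conceptual heart of the argument is the observation that the right way to exploit the separation hypothesis is through $\pi$-invariance: once one sees that $A$ is $\pi$-stable and hence $\sigma$-boundary crossings coincide with $\gamma$-boundary crossings, the conclusion reduces to a simple pigeonhole on even elements of a single $\sigma$-cycle. I expect no real technical difficulty beyond keeping track of the wrap-around cases $\gamma(2n)=1$ and $\gamma(2n+2m)=2n+1$, but since those images are still odd and remain in the same $\gamma$-cycle, they do not disturb the parity dichotomy that drives the contradiction.
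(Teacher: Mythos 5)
Your proof is correct, and it takes a genuinely different route from the paper's. The paper argues by splitting $[2n+2m]$ into two blocks $A,B$ lying above $\Part{\pi}\vee I$, restricting $\pi$ and $\gammanm$ to each block, and computing $\#(\pi)+\#(\pi^{-1}\gamma_A\gamma_B)$ in two ways: once via \cite[Lemma 5]{mingo2009second} applied to the restrictions, and once by writing $\gamma_A\gamma_B$ as $\gammanm$ times transpositions of even elements and using the separation hypothesis to track how each transposition changes the cycle count; the contradiction comes from comparing the two counts, and the argument leans on the geodesic condition $\#(\pi)+\#(\pi^{-1}\gammanm)=2n+2m$ built into $S_{\NC}(2n,2m)$. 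Your argument instead stays entirely at the level of orbits: you observe that a proper block $A$ of $\Part{\pi}\vee I$ is simultaneously $\pi$-stable and $I$-stable, that $\gamma$-boundary crossings of $A$ can only occur at even elements (the odd wrap-around never happens since $\gamma(2k-1)=2k$ is the $I$-partner of $2k-1$), that $\pi$-stability makes $\sigma=\pi^{-1}\gammanm$-crossings coincide with $\gamma$-crossings, and that following the $\sigma$-orbit of an even exit point until it re-enters $A$ produces a second, distinct even element in the same cycle of $\sigma$ --- contradicting separation. This is more elementary (no restricted permutations, no cycle-counting lemmas) and in fact slightly more general: you never use the condition $\#(\pi)+\#(\pi^{-1}\gammanm)=2n+2m$, only $\Part{\pi}\vee\Part{\gammanm}=\boldsymbol{1}$, so your argument applies to any permutation whose join with $\gammanm$ is $\boldsymbol{1}$ and whose Kreweras complement separates even. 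The paper's approach, on the other hand, is of a piece with the length-function machinery used throughout the rest of the paper (e.g.\ in Proposition \ref{Proposition: separates implies connected 2} and Lemma \ref{Lemma: characterization of connected}), which is presumably why the authors chose it.
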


\begin{proof}
Suppose $\Part{\pi}\vee I<\boldsymbol{1}$, then there is a partition $\cV$ with two blocks $A$ and $B$ such that $\Part{\pi}\vee I\leq \cV$. Observe that there must be one block $A$ or $B$ that intersect both sets $[2n]$ and $[2m]$, otherwise $A=[2n]$ and $B=[2m]$ or the other way around. Since $\Part{\pi} \leq \Part{\pi}\vee I\leq \cV$ in either we would have each cycle of $\pi$ being contained in either $[2n]$ or $[2m]$ which is a contradiction as $\pi \vee \gammanm=\boldsymbol{1}$. So let us assume $B\cap [2n]\neq \emptyset$ and $B\cap [2m]\neq \emptyset$.

Let $\gamma_A:=\gammanm|_A$ and $\gamma_B:=\gammanm|_B$. Note that $\gamma_A$ must have either $1$ or $2$ cycles, if it has $1$ cycle then by \cite[Lemma 5]{mingo2009second} it follows $\pi|_A \in \NC(\gamma_A)$. If it has two cycles, $C_1,C_2$, by \cite[Lemma 5]{mingo2009second} it follows $\pi|_A \in \left(\NC(C_1)\times \NC(C_2)\right) \sqcup S_{\NC}(C_1,C_2)$. In the first case it implies
$$\#(\pi|_A)+\#(\pi|_A^{-1}\gamma_A) = |A|+1,$$
in the second case it implies
$$\#(\pi|_A)+\#(\pi|_A^{-1}\gamma_A) = |A|+2,$$
provided $\pi|_A \in \NC(C_1)\times \NC(C_2)$ while
$$\#(\pi|_A)+\#(\pi|_A^{-1}\gamma_A) = |A|,$$
provided $\pi|_A \in S_{\NC}(C_1,C_2)$. On the other hand since 
$B\cap [2n]\neq \emptyset$ and $B\cap [2m]\neq \emptyset$ it follows by the same argument that
$$\#(\pi|_B)+\#(\pi|_B^{-1}\gamma_B) = |B|.$$
So we conclude that 
$$\#(\pi)+\#(\pi^{-1}\gamma_A\gamma_B)\in \{2n+2m+1,2n+2m+2,2n+2m\}.$$

On the other hand, since each block $A$ and $B$ is the union of blocks of the form $\{2j-1,2j\}$ for $j=1,2,\dots,n+m$ then it follows that $$\gamma_A\gamma_B=\gammanm(u_1,u_2)(u_2,u_3)\cdots (u_{r}u_{r+1})(v_1,v_2)(v_2,v_3)\cdots (v_{s},v_{s+1}),$$
where $u_1 < u_2 <\cdots < u_{r+1}$ are all even numbers contained in $[2n]$ and $v_1 < v_2 <\cdots < v_{s+1}$ are all even numbers contained in $[2m]$. This is because when we multiply by the transpositions $(u_i,u_{i+1})$ we split the cycle $(1,\dots,2n)$ into two cycles, one that contains the elements of $A$ and one that contains the elements of $B$ and both respect the order of the cycle $(1,\dots,2n)$. The cycle that contains the elements of $A$ is a cycle of $\gamma_A$ while the cycle that contains the elements of $B$ is a cycle of $\gamma_B$ (which might be empty in which case we take no transpositions). Similarly when multiplying by the transpositions $(v_i,v_{i+1})$ we obtain the other cycles of $\gamma_A$ and $\gamma_B$. Therefore $$\pi^{-1}\gamma_A\gamma_B=\pi^{-1}\gammanm(u_1,u_2)(u_2,u_3)\cdots (u_{r}u_{r+1})(v_1,v_2)(v_2,v_3)\cdots (v_{s},v_{s+1}),$$
but $\pi^{-1}\gammanm$ separates even numbers and then
\begin{align*} 
\#(\pi^{-1}\gamma_A\gamma_B) &=
\#(\pi^{-1}\gammanm(u_1,u_2)(u_2,u_3)\cdots (u_{r},u_{r+1})(v_1,v_2)(v_2,v_3)\cdots (v_{s},v_{s+1})) \\
&=\#(\pi^{-1}\gammanm)-r-s.
\end{align*}
Using that $\pi\in S_{\NC}(2n,2m)$ we get,
$$\#(\pi)+\#(\pi^{-1}\gamma_A\gamma_B)=\#(\pi)+\#(\pi^{-1}\gammanm)-r-s=2n+2m-r-s. $$
By the observed before it means the unique possible solution is $r=s=0$ which means $A=[2n]$ and $B=[2m]$, or the other way around. But this impossible as mentioned at the beginning of the proof.
\end{proof}

Now we can use $\GG_\pi$ to test when $\{A(\varepsilon),B(\varepsilon)\}\geq \pi$ for a fixed $\pi\in S_{\NC}(2n,2m)$ and a fixed $\varepsilon\in \{1,*\}^{m+n}$. Turns out that $\GG_\pi$ must be bipartite.

\begin{remark}[Bipartite graphs]
\label{rem.bipartite.graph}
Let $\GG=(V_\GG,E_\GG)$ be an undirected graph (where loops and multiedges are allowed), we say that $\GG$ is \emph{bipartite} if there exist a partition $\{V'_\GG,V''_\GG\}$ of the set of vertices $V_\GG$ such that there are no edges $E_\GG$ connecting two vertices in $V'_\GG$ or connecting two vertices in $V''_\GG$. In other words, all vertices connect a vertex in $V'_\GG$ with a vertex in $V''_\GG$. 

Moreover, if $\GG$ is connected and bipartite, then the bipartition $\{V'_\GG,V''_\GG\}$ is unique (up to the permutation $\{V''_\GG,V'_\GG\}$). This is because once we identify a vertex $v\in V'_\GG$, then the set of any other vertex $u\in  V_\GG$ is determined by its distance to $v$. If the distance is even, then $u\in V'_\GG$, and if the distance is odd, then $u\in V''_\GG$.
\end{remark}

\begin{proposition}
\label{prop.few.nonvanishing.epsilons.2}
Let $\pi\in S_{\NC}(2n,2m)$ be such that $ \pi^{-1}\gammanm$ separates even elements. Assume that there exists an $\varepsilon\in\{1,*\}^{n+m}$ such that $\{A(\varepsilon),B(\varepsilon)\}\geq \pi$, then $\pi\in \JJ_{2n,2m}$. Moreover if $\pi\in \JJ_{2n,2m}$ there are exactly two tuples $\varepsilon\in\{1,*\}^{n+m}$ satisfying $\{A(\varepsilon),B(\varepsilon)\}\geq \pi$. Furthermore, these tuples are completely opposite, that is, they do not coincide in any entry.
\end{proposition}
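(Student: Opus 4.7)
The plan is to exploit the defining property of the graph $\GG_\pi$: its edges are exactly the pairs joining the cycle containing $2i-1$ with the cycle containing $2i$. The condition $\{A(\varepsilon),B(\varepsilon)\}\geq \pi$ is equivalent to saying that every cycle of $\pi$ is entirely contained in one of the two sets $A(\varepsilon)$ or $B(\varepsilon)$; equivalently, each vertex of $\GG_\pi$ receives a label ``$a$'' or ``$b$'' according to which of the two sets its cycle lies in. The crucial observation is that, by definition of $A(\varepsilon)$ and $B(\varepsilon)$, for every $1\leq i\leq n+m$ the pair $\{2i-1,2i\}$ has exactly one element in $A(\varepsilon)$ and one in $B(\varepsilon)$ (whichever side depends on the value of $\varepsilon(i)$). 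Hence every edge of $\GG_\pi$ connects an ``$a$''-vertex with a ``$b$''-vertex, so $\GG_\pi$ is bipartite with bipartition given by the cycles in $A(\varepsilon)$ versus those in $B(\varepsilon)$.

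Combining this with Proposition \ref{Proposition: separates implies connected}, which ensures that $\GG_\pi$ is connected under the hypothesis that $\pi^{-1}\gammanm$ separates even elements, we conclude that $\GG_\pi$ is connected and bipartite. By the definition of $\JJ_{2n,2m}$, this proves the first assertion: $\pi\in \JJ_{2n,2m}$.

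For the second assertion, fix $\pi\in \JJ_{2n,2m}$. The argument above actually gives a map from the set of $\varepsilon$'s satisfying $\{A(\varepsilon),B(\varepsilon)\}\geq \pi$ to the set of bipartitions of $\GG_\pi$, sending $\varepsilon$ to the partition of the cycles of $\pi$ into ``$a$''-cycles (those inside $A(\varepsilon)$) and ``$b$''-cycles (those inside $B(\varepsilon)$). This map is injective: given a bipartition $(V',V'')$ of $\GG_\pi$, for each $i$ exactly one of the elements $2i-1,2i$ lies in a cycle belonging to $V'$ and the other in a cycle belonging to $V''$; this dictates $\varepsilon(i)$ uniquely ($\varepsilon(i)=1$ when $2i-1\in V'$, and $\varepsilon(i)=*$ when $2i\in V'$, say). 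The map is also surjective, since the assignment just described clearly produces an $\varepsilon$ with $\{A(\varepsilon),B(\varepsilon)\}\geq \pi$. By Remark \ref{rem.bipartite.graph}, a connected bipartite graph admits exactly two bipartitions, one being the swap of the other. Pulling back under the bijection, this gives exactly two admissible tuples $\varepsilon$; moreover, swapping $V'$ with $V''$ flips every $\varepsilon(i)$, so the two tuples are completely opposite.

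The main step that requires care is the bookkeeping in the bijection between tuples $\varepsilon$ and bipartitions of $\GG_\pi$; once one is careful that the labelling convention on the pair $\{2i-1,2i\}$ matches the definition of $A(\varepsilon)$ versus $B(\varepsilon)$, everything reduces to the uniqueness of the bipartition in a connected bipartite graph.
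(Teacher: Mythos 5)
Your proposal is correct and follows essentially the same route as the paper: bipartiteness of $\GG_\pi$ is read off from the fact that each edge joins the cycle containing $2i-1$ to the cycle containing $2i$, exactly one of which lies in $A(\varepsilon)$, and the count of two opposite tuples comes from the uniqueness (up to swap) of the bipartition of a connected bipartite graph together with Proposition \ref{Proposition: separates implies connected} for connectedness. Your explicit check that both bipartitions actually produce admissible tuples (the surjectivity step) is a small point the paper leaves implicit, but the argument is the same in substance.
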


\begin{proof}
The existence of an $\varepsilon\in\{1,*\}^n$ such that $\{A(\varepsilon),B(\varepsilon)\}\geq \pi$ implies that we can write $\pi=\{A_1,\dots,A_r,B_1,\dots, B_s\}$ such that $A_i\subset A(\varepsilon)$ for $i=1,\dots, r$ and $B_j\subset B(\varepsilon)$ for $j=1,\dots,s$. If we consider $V'=\{A_1,\dots,A_r\}$ and $V''=\{B_1,\dots, B_s\}$, then $\{V',V''\}$ is a bipartition of $\GG_\pi$. Indeed, by construction of $\GG_\pi$, if $e\in E_{\GG_\pi}$, then $e$ connects the blocks containing elements $2k-1$ and $2k$ for some $k=1,\dots,n$. However, by definition of $A(\varepsilon)$ and $B(\varepsilon)$, one must contain $2k-1$ while the other contains $2k$. Thus $e$ connects vertices from different sets and $\{V',V''\}$ is actually a bipartition of $\GG_\pi$. Since $ \pi^{-1}\gammanm$ separates even by assumption, we conclude that $\pi\in \JJ_{2n,2m}$.

Given $\pi \in \JJ_{2n,2m}$, from Proposition \ref{Proposition: separates implies connected} we obtain that $\GG_\pi$ is connected. By definition of $\JJ_{2n,2m}$ we also know that $\GG_\pi$ is bipartite. Thus Remark \ref{rem.bipartite.graph} asserts that there exist a unique bipartition $\{\pi',\pi''\}$ of the vertices of $\GG_\pi$ (blocks of $\pi$), say $\pi'=\{V_1,\dots,V_r\}$ and $\pi''=\{W_1,\dots, W_s\}$, where $V_1$ contains the element 1. Then there are only two options, $A(\varepsilon)=V_1\cup \dots \cup V_r$ or $A(\varepsilon)=W_1\cup \dots \cup W_s$. The set $A(\varepsilon)$ clearly determines $\varepsilon$, since $\varepsilon(i)=1$ if and only if $2i-1\in A(\varepsilon)$. So there are two possible $\varepsilon$. Furthermore, since $(V_1\cup \dots \cup V_r)\cap (W_1\cup \dots \cup W_s)=\emptyset$, then the two possible $\varepsilon$ do not coincide in any entry. 
\end{proof}

\begin{notation}\label{Notation: Deiniftion of two epsilons}
In light of the previous result, given a $\pi\in S_{\NC}(2n,2m)$ such that $\GG_\pi$ is connected and bipartite, we will denote by $\varepsilon_\pi$ the (unique) tuple such that $\{A(\varepsilon_\pi),B(\varepsilon_\pi)\}\geq \pi$ and $1\in A(\varepsilon_\pi)$. And we denote by $\varepsilon'_\pi$ the other possible tuple, which actually satisfies that $A(\varepsilon'_\pi)=B(\varepsilon_\pi)$ and $B(\varepsilon'_\pi)=A(\varepsilon_\pi)$.
\end{notation}

\begin{remark}
\label{rem.pi.prime.2}
Recall from Definition \ref{main.defi.2} that every $\pi\in \JJ_{2n,2m}$ is naturally decomposed as $\pi:=\pi'\sqcup \pi''$ where $(\pi',\pi'')$ is the bipartition of $\GG_\pi$. From the previous proof we observe that $\pi'=\pi|A(\varepsilon_\pi)$ and $\pi''=\pi|B(\varepsilon_\pi)$.  
\end{remark}

We are ready to prove our main result, Theorem \ref{Thm.anticommutator.main.2}. We state it in a slightly different form, where we expand the sums further. The equivalence of both results is further discussed below in Remark \ref{Remark: Equivalence of main theorems}.

\begin{theorem}
\label{Thm.anticommutator.main.2.2}
Consider two second order free random variables $a$ and $b$, and let $(\freec{n}{a})_{n\geq 1},(\freec{n,m}{a})_{n,m\geq 1}$, $(\freec{n}{b})_{n\geq 1},(\freec{n,m}{b})_{n,m\geq 1}$ and $(\freec{n,m}{ab+ba})_{n,m\geq 1}$ be the sequence of first and second order free cumulants of $a$, $b$ and $ab+ba$, respectively. Then, for every $n,m\geq 1$ one has
\begin{equation}
\label{formula.anticommutator.main.2}
\freec{n,m}{ab+ba} = \sum_{\substack{\pi \in \JJ_{2n,2m}\\ \pi=\pi' \sqcup \pi''}} \,\left( \freec{\pi'}{a}  \, \freec{\pi''}{b} + \freec{\pi'}{b} \freec{ \pi''}{a} \right)+ \sum_{\substack{\pi_1\times \pi_2\in \NCac_{2n}\times \NCac_{2m} \\ \pi_1 =\pi_1'\sqcup \pi_1'' \\ \pi_2 =\pi_2'\sqcup \pi_2''}}\sum_{(C_1,C_2) \in \pi_1 \times \pi_2} f(C_1,C_2)
\end{equation}

where
$$f(C_1,C_2)= \left\{ \begin{array}{lcc} \mathcal{A} & \text{if} & (C_1,C_2)\in \pi_1'\times \pi_2' \\ \\ \mathcal{B} & \text{if} & (C_1,C_2)\in \pi_1'\times \pi_2'' \\ \\ \mathcal{C} & \text{if} & (C_1,C_2)\in \pi_1''\times \pi_2' \\ \\ \mathcal{D} & \text{if} & (C_1,C_2)\in \pi_1''\times \pi_2'' \end{array} \right.$$
and
$$
\mathcal{A} =: \freec{|C_1|,|C_2|}{a}\, \freec{\pi_1'\setminus C_1}{a}\,  \freec{\pi_1''}{b}\, \freec{\pi_2'\setminus C_2}{a}\, 
  \freec{\pi_2''}{b}  + \freec{|C_1|,|C_2|} {b}
\, \freec{\pi_1'\setminus C_1}{b}\,  \freec{\pi_1''}{a}\, \freec{\pi_2'\setminus C_2}{b}\, 
  \freec{\pi_2''}{a},
$$
$$
\mathcal{B} =: \freec{|C_1|,|C_2|}{a} \, 
\freec{\pi_1'\setminus C_1}{a} \,  
\freec{\pi_1''}{b} \, 
\freec{\pi_2'}{a} \,
\freec{\pi_2''\setminus C_2}{b} 
+ \freec{|C_1|,|C_2|} {b} \, 
\freec{\pi_1'\setminus C_1}{b} \,  
\freec{\pi_1''}{a} \, 
\freec{\pi_2'}{b}\, 
\freec{\pi_2''\setminus C_2}{a},
$$
$$
\mathcal{C} =: \freec{|C_1|,|C_2|}{a}\, \freec{\pi_1'}{a}\,  \freec{\pi_1''\setminus C_1}{b}\, \freec{\pi_2'\setminus C_2}{a} \, 
 \freec{\pi_2''}{b}  + \freec{|C_1|,|C_2|} {b}
\, \freec{\pi_1'}{b}\,  \freec{\pi_1''\setminus C_1}{a}\, \freec{\pi_2'\setminus C_2}{b}\, 
  \freec{\pi_2''}{a},
$$
$$
\mathcal{D} =: \freec{|C_1|,|C_2|}{a}\, \freec{\pi_1'}{a}\,  \freec{\pi_1''\setminus C_1}{b}\, \freec{\pi_2'}{a}\, 
  \freec{\pi_2''\setminus C_2}{b}  + \freec{|C_1|,|C_2|} {b}
\, \freec{\pi_1'}{b}\,  \freec{\pi_1''\setminus C_1}{a}\, \freec{\pi_2'}{b} \, 
 \freec{\pi_2''\setminus C_2}{a}.
$$

\end{theorem}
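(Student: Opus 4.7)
The plan is to start from Proposition~\ref{prop.basic.anticomm.2}, which already expresses $\freec{n,m}{ab+ba}$ as a double sum over non-crossing partitioned permutations $(\cU, \pi) \in \PS_{NC}(2n, 2m)$ and tuples $\varepsilon \in \{1,*\}^{n+m}$ satisfying $\{A(\varepsilon),B(\varepsilon)\} \geq \cU$, and then to split the outer sum according to the two types in Definition~\ref{Definition:Non-crossing-pp}.

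For the Type~1 contribution, where $\pi \in S_{\NC}(2n,2m)$ and $\cU = \Part{\pi}$, Proposition~\ref{prop.few.nonvanishing.epsilons.2} applies directly: the inner sum over $\varepsilon$ is non-empty precisely when $\pi \in \JJ_{2n,2m}$, and in that case there are exactly two tuples $\varepsilon_\pi, \varepsilon_\pi'$ corresponding to the two orientations of the bipartition $\pi = \pi' \sqcup \pi''$. Evaluating $\kappa_{(\Part{\pi},\pi)}((a,b)^{\varepsilon})$ on each tuple and using Remark~\ref{rem.pi.prime.2} yields $\freec{\pi'}{a}\freec{\pi''}{b} + \freec{\pi'}{b}\freec{\pi''}{a}$, which is exactly the first sum in \eqref{formula.anticommutator.main.2}.

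For the Type~2 contribution, I would write $\pi = \pi_1 \times \pi_2 \in \NC(2n) \times \NC(2m)$ with a distinguished pair $A \in \pi_1$, $B \in \pi_2$ merged into a single block $U = A \cup B$ of $\cU$. The condition that $\pi^{-1}\gamma$ separates even numbers factors across the two annular pieces into $\pi_1^{-1}\gamma_{2n}$ and $\pi_2^{-1}\gamma_{2m}$ separating even, which (as discussed after Theorem~\ref{thm.products.arguments} and via Lemma~\ref{Lemma.connected}) is equivalent to both $\GG_{\pi_1}$ and $\GG_{\pi_2}$ being connected. Similarly, the condition $\{A(\varepsilon), B(\varepsilon)\} \geq \cU$ forces each of $\GG_{\pi_1}$ and $\GG_{\pi_2}$ to be bipartite, since the cycles of $\pi_1$ and $\pi_2$ must split into two classes according to whether they lie in $A(\varepsilon)$ or $B(\varepsilon)$; moreover, $A$ and $B$ must fall on the same side because they belong to the single block $U$. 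This is precisely the condition $(\cU, \pi) \in \NCac_{2n,2m}$, and by Remark~\ref{rem.bipartite.graph} there are again exactly two valid $\varepsilon$'s.

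Finally, I would re-index the Type~2 sum: instead of summing over $(\cU, \pi) \in \NCac_{2n,2m}$, sum over pairs $\pi_1 \times \pi_2 \in \NCac_{2n} \times \NCac_{2m}$ together with a choice $(C_1, C_2) \in \pi_1 \times \pi_2$ identifying the merged cycles $(A,B)$. For each of the four cases $(C_1, C_2) \in \pi_1^{(\alpha)} \times \pi_2^{(\beta)}$ with $\alpha, \beta \in \{\prime, \prime\prime\}$, evaluating $\kappa_{(\cU, \pi)}((a,b)^{\varepsilon})$ on the two valid $\varepsilon$'s reads off a second-order cumulant $\freec{|C_1|,|C_2|}{c}$ from the merged block while the remaining cycles contribute first-order cumulants of $a$ or $b$ according to which side of the global bipartition they occupy, producing exactly $\mathcal{A}$, $\mathcal{B}$, $\mathcal{C}$ or $\mathcal{D}$. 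The main obstacle is the bipartiteness analysis in the Type~2 case: one must verify carefully that the bipartitions of $\GG_{\pi_1}$ and $\GG_{\pi_2}$ induced by $\varepsilon$ are forced to agree on the merged block $U$, so that only two of the four a priori possible $\varepsilon$'s survive. After this step the case-by-case identification of $\mathcal{A}, \mathcal{B}, \mathcal{C}, \mathcal{D}$ is mechanical bookkeeping.
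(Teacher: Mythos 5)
Your proposal is correct and follows essentially the same route as the paper's proof: starting from Proposition \ref{prop.basic.anticomm.2}, handling Type~1 via Proposition \ref{prop.few.nonvanishing.epsilons.2} and Remark \ref{rem.pi.prime.2}, and handling Type~2 by factoring the separation condition over $\pi_1$ and $\pi_2$, reducing to the first-order connectivity/bipartiteness results, and observing that the merged block $U$ links the choices of $\varepsilon_{\pi_1}$ and $\varepsilon_{\pi_2}$ so that only two tuples survive. The step you flag as the main obstacle is resolved in the paper exactly as you indicate: $\{A(\varepsilon),B(\varepsilon)\}\geq \cU$ forces $C_1$ and $C_2$ to lie on the same side, after which the identification of $\mathcal{A},\mathcal{B},\mathcal{C},\mathcal{D}$ is bookkeeping.
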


\begin{remark}\label{Remark: Equivalence of main theorems}
One may regard a partitioned permutation $(\cU,\pi)\in S_{\NC}^\prime(2n,2m)$ as a pair $\pi=\pi_1\times \pi_2$ and $(C_1,C_2)\in\pi_1\times \pi_2$. In this case $(C_1,C_2)$ correspond to the unique cycles of $\pi$ that are merged into a single block of $\cU$. In this setting, it is easy to note that $\GG_\cU$ is the resulting graph after merging the vertices $C_1$ and $C_2$ of the graphs $\GG_{\pi_1}$ and $\GG_{\pi_2}$ respectively. This vertex corresponds to the unique block $U\in \cU$ which is union of two cycles of $\pi$. Further, it is easy to observe that $\GG_\cU$ is connected and bipartite if and only if both $\GG_{\pi_1}$ and $\GG_{\pi_2}$ are connected and bipartite. This allows us to rewrite the second sum of our main result, Equation \eqref{formula.anticommutator.main.2.intro}, as a double sum over $\pi=\pi_1\times\pi_2$ and $(C_1,C_2)\in \pi_1\times \pi_2$ where both $\GG_{\pi_1}$ and $\GG_{\pi_2}$ are connected and bipartite. Therefore, we may restate our main theorem in terms of the sets $\NCac_{2n}$ and $\NCac_{2m}$ introduced in \cite{perales2021anti} consisting of the set of connected and bipartite graphs $\GG_\pi$.
\end{remark}

\begin{proof}
Our starting point is Equation \eqref{eq.rephrase.2} from Proposition \ref{prop.basic.anticomm.2}. We work with the cases $(\cU,\pi)\in S_{\NC}(2n,2m)$ and $(\cU,\pi)\in S_{\NC}^\prime(2n,2m)$ separately, so that $\freec{n,m}{ab+ba}$ is the sum of
\begin{equation*}
\sum_{\substack{\pi\in S_{\NC}(2n,2m) \\ \pi^{-1}\gammanm \text{ sep. even}}} \sum_{\substack{\varepsilon\in \{1,*\}^{n+m}\\ \{A(\varepsilon),B(\varepsilon)\}\geq \pi}} \kappa_{\pi}((a,b)^{\varepsilon})
\end{equation*}
and
\begin{equation*}
\sum_{\substack{(\cU,\pi)\in S_{\NC}^\prime(2n,2m)\\ \pi^{-1}\gammanm \text{ sep. even}}} \sum_{\substack{\varepsilon\in \{1,*\}^{n+m}\\ \{A(\varepsilon),B(\varepsilon)\}\geq \cU}} \kappa_{(\cU,\pi)}((a,b)^{\varepsilon})
\end{equation*}
If $\pi\in S_{\NC}(2n,2m)$ then by Proposition \ref{prop.few.nonvanishing.epsilons.2} the condition $\{A(\varepsilon),B(\varepsilon)\}\geq \pi$ is only true if $\GG_\pi$ is bipartite. Furthermore, in this case $\varepsilon$ can only be one of $\varepsilon_\pi$ or $\varepsilon'_\pi$ as in Notation \ref{Notation: Deiniftion of two epsilons}. Therefore, the first summand can be simplified to  
\[
\sum_{\substack{\pi \in \JJ_{2n,2m}}} 
\Bigg( \prod_{\substack{V\in \pi, \\ V\subset A(\varepsilon_\pi)}} \freec{|V|}{a} \prod_{\substack{W\in \pi, \\ W\subset B(\varepsilon_\pi)}} \freec{|W|}{b} +
 \prod_{\substack{V\in \pi, \\ V\subset A(\varepsilon'_\pi)}} \freec{|V|}{a}  \prod_{\substack{W\in \pi, \\ W\subset B(\varepsilon'_\pi)}} \freec{|W|}{b} \Bigg).
\]
From Remark \ref{rem.pi.prime.2} we have that $\pi'=\{V\in\pi:V\subset A(\varepsilon_\pi)\}=\{W\in \pi: W\subset B(\varepsilon'_\pi)\}$ and similarly $\pi''=\{W\in\pi:W\subset B(\varepsilon_\pi)\}=\{V\in \pi: V\subset A(\varepsilon'_\pi)\}$, so we obtain the desired first summand of the formula \eqref{formula.anticommutator.main.2}. For the second sum we have $\pi=\pi_1\times \pi_2\in \NC(2n)\times \NC(2m)$. The condition $\pi^{-1}\gammanm$ separates even becomes $\pi_1^{-1}\gamma_{2n}$ and $\pi_2^{-1}\gamma_{2m}$ separates even, where $\gamma_{2n}=(1,\dots,2n)$ and $\gamma_{2m}=(2n+1,\dots,2n+2m)$. Further $\{A(\varepsilon),B(\varepsilon)\}\geq \cU\geq \pi$. From this point we can proceed as in the proof of \cite[Theorem 1.4]{perales2021anti} for each of the partitions $\pi_1$ and $\pi_2$ so that $\GG_{\pi_1}$ and $\GG_{\pi_2}$ must be connected and bipartite. Moreover, in this case, for each $\pi_1$, $\varepsilon$ can only be one of $\varepsilon_{\pi_1}$ or $\varepsilon_{\pi_1}^\prime$ and similarly for $\pi_2$. 
Now we are allowed to choose one cycle from each $\pi_1$ and $\pi_2$, say $C_1$ and $C_2$ respectively, so that every cycle of $\pi$ is a block of $\cU$ except $C_1$ and $C_2$ for which $C_1\cup C_2$ is a block of $\cU$. Since we require $\{A(\varepsilon),B(\varepsilon)\}\geq \cU$ then if $C_1 \subset A(\varepsilon)$, it must be $C_2 \subset A(\varepsilon)$. This means that given a choice $\varepsilon_{\pi_1}$ or $\varepsilon_{\pi_1}^\prime$ the choice of $\varepsilon_{\pi_2}$ or $\varepsilon_{\pi_2}^\prime$ is determined. That is, there are only two possible choices for $\varepsilon$, say $\varepsilon_\pi$ and $\varepsilon_\pi^\prime$. To conclude, it is enough to observe that for a given $(\cU,\pi)$ the corresponding contribution $\kappa_{(\cU,\pi)}((a,b)^{\varepsilon_\pi})+\kappa_{(\cU,\pi)}((a,b)^{\varepsilon_\pi^\prime})$ is one of the terms $\mathcal{A},\mathcal{B},\mathcal{C}$ or $\mathcal{D}$ depending on either $(C_1,C_2)\in \pi_1^\prime\times\pi_2^\prime,(C_1,C_2)\in \pi_1^\prime\times\pi_2^{\prime\prime},(C_1,C_2)\in \pi_1^{\prime\prime}\times\pi_2^\prime$ or $(C_1,C_2)\in \pi_1^{\prime\prime}\times\pi_2^{\prime\prime}$ respectively. This gives the desired second summand of the formula \eqref{formula.anticommutator.main.2}.

\end{proof}

\begin{remark}
Notice that in Theorem \ref{Thm.anticommutator.main.2.2}, the first sum preserves exactly the same conditions as the sum that appears in its analogous first order case \eqref{formula.anticommutator.main}. The only two small differences are that we sum over non-crossing annular permutations instead of non-crossing partitions and we sum over permutations $\pi$ such that $\pi^{-1}\gammanm$ separates even, instead of the graph being connected. We will discuss this requirement in Section \ref{Section: detail study of the sets}. On the other hand, the contribution of the second sum is the product of exactly one second order free cumulant, and some first order free cumulants.
\end{remark}


\subsection{A concrete formula for small \texorpdfstring{$n$}{n} and \texorpdfstring{$m$}{m}.}
\label{ssec:concrete}

In order to exemplify how our formula works, we provide a list of the partitions in $\JJ_{2,2}$, $\NCac_{2,2}$, $\JJ_{4,2}$, and $\NCac_{4,2}$. Then we use this to compute the formulas for $\freec{1,1}{ab+ba}$ and $\freec{2,1}{ab+ba}=\freec{1,2}{ab+ba}$.

\subsubsection{Case \texorpdfstring{$n=m=1$}{n=m=1}}

Notice that $\JJ_{2,2}$ contains only one permutation $\pi=(1,3)(2,4)$. Its Kreweras complement is $\sigma=\pi^{-1}\gamma_{2,2}=(1,4)(2,3)$. To put this into perspective, the set $S_{\NC}(2,2)$ that in principle (before using our theorem) indexes the terms of the first sum in the formula for the anti-commutator, has 18 permutations. But only 1 in is actually needed to compute the anti-commutator.

To understand the terms on the second sum, we notice that $(\cU,\pi) \in \NCac_{2,2}$ if $\pi=\pi_1\times \pi_2$ with $\GG_{\pi_1}$ and $\GG_{\pi_2}$ connected and bipartite, thus, $\pi_1=\pi_2=(1)(2)$. Meaning that 
$\pi=(1)(2)(3)(4)$. To form $\cU$ we need to merge a block from $\{(1),(2)\}$ with a block from $\{(3),(4)\}$. Thus there 4 possible partitions $\cU$:
$$\{\{1,3\},\{2\},\{4\}\}, \quad \{\{1,4\},\{2\},\{3\}\}, \quad \{\{1\},\{2,3\},\{4\}\}, \quad \{\{1\},\{2,4\},\{3\}\}.$$

Once we understand $\JJ_{2,2}$ and $\NCac_{2,2}$, our main formula \eqref{formula.anticommutator.main.2} yields that given second order free random variables $a$ and $b$, the $(1,1)$-cumulant of their anti-commutator is
$$\freec{1,1}{ab+ba} = 2\freec{2}{a}\freec{2}{b} +4\freec{1,1}{a}\freec{1}{b}\freec{1}{b}+4\freec{1,1}{b}\freec{1}{a}\freec{1}{a}.$$

\subsubsection{Case \texorpdfstring{$n=2$, $m=1$}{n=2, m=1}}

With some effort, one can also find the partitions $\pi$ in $\JJ_{4,2}$ and their Kreweras complements $\sigma=\pi^{-1}\gamma_{4,2}$. There are 14:
\begin{center}
\begin{tabular}{c|c}
$\pi$ & $\sigma=\pi^{-1}\gamma_{4,2}$ \\ \hline
(1,6,4)(2,3,5)  & (1,5)(2)(4)(3,6) \\
(1,5,4)(2,3,6) & (1,6)(2)(3,5)(4) \\ 
(1,5,4)(2,3)(6) & (1,3,5,6)(2)(4) \\
(1,6,4)(2,3)(5)  & (1,3,6,5)(2)(4) \\
(1,4)(2,3,6)(5) & (1,6,5,3)(2)(4) \\ 
(1,4)(2,3,5)(6)  & (1,5,6,3)(2)(4) \\
(1,6,4)(2)(3,5)  & (1,2,5)(4)(3,6) \\
\end{tabular}
\hspace{.5cm}
\begin{tabular}{c|c}
$\pi$ & $\sigma=\pi^{-1}\gamma_{4,2}$ \\ \hline
(1,5,4)(2,6)(3) & (1,6)(2,3,5)(4) \\
(1)(2,3,5)(4,6) & (1,5,4)(2)(3,6) \\
(1,5)(2,3,6)(4) & (1,6)(2)(3,4,5) \\
(1,3,5)(2)(4,6) & (1,2)(4,5)(3,6) \\ 
(1)(2,6,4)(3,5) & (1,4)(2,5)(3,6) \\ 
(1,5)(2,4,6)(3) & (1,6)(2,3)(4,5) \\ 
(1,5,3)(4)(2,6) & (1,6)(2,5)(3,4) \\
\end{tabular}
\end{center}

The terms on the second sum are indexed by partitions $(\cU,\pi) \in \NCac_{4,2}$. Thus $\pi=\pi_1\times \pi_2$ with 
$\pi_1\in \NCac_4$ and $\pi_2\in \NCac_2$. As mentioned previously, $\NCac_2$ only contains the partition $(1)(2)$. On the other hand $\NCac_4$ has 5 partitions. Thus we get that there are 5 possible $\pi$: 
$$(14)(23)(5)(6), \quad (13)(2)(4)(5)(6), \quad (14)(2)(3)(5)(6),$$
$$\quad (1)(23)(4)(5)(6), \quad (1)(24)(3)(5)(6).$$
To form $\cU$ we need to merge a block from $\pi_1$ with a block from $\pi_2$. So there are 28 possible partitions $\cU$, listed below:

\begin{center}
\begin{tabular}{c}
$\pi=(14)(23)(5)(6)$ \\ \hline
$\{\{1,4,5\},\{2,3\},\{6\}\}$ \\
$\{\{1,4\},\{2,3,5\},\{6\}\}$ \\
$\{\{1,4,6\},\{2,3\},\{5\}\}$ \\
$\{\{1,4\},\{2,3,6\},\{5\}\}$ 
\end{tabular}
\begin{tabular}{c}
$\pi=(13)(2)(4)(5)(6)$ \\ \hline
$\{\{1,3,5\},\{2\},\{4\},\{6\}\}$ \\
$\{\{1,3\},\{2,5\},\{4\},\{6\}\}$ \\
$\{\{1,3\},\{2\},\{4,5\},\{6\}\}$ \\
$\{\{1,3,6\},\{2\},\{4\},\{5\}\}$ \\
$\{\{1,3\},\{2,6\},\{4\},\{5\}\}$ \\
$\{\{1,3\},\{2\},\{4,6\},\{5\}\}$ 
\end{tabular}
\begin{tabular}{c}
$\pi=(14)(2)(3)(5)(6)$ \\ \hline
$\{\{1,4,5\},\{2\},\{3\},\{6\}\}$ \\
$\{\{1,4\},\{2,5\},\{3\},\{6\}\}$ \\
$\{\{1,4\},\{2\},\{3,5\},\{6\}\}$ \\
$\{\{1,4,6\},\{2\},\{3\},\{5\}\}$ \\
$\{\{1,4\},\{2,6\},\{3\},\{5\}\}$ \\
$\{\{1,4\},\{2\},\{3,6\},\{5\}\}$ 
\end{tabular}
\begin{tabular}{c}
$\pi=(1)(23)(4)(5)(6)$ \\ \hline
$\{\{1,5\},\{2,3\},\{4\},\{6\}\}$ \\
$\{\{1\},\{2,3,5\},\{4\},\{6\}\}$ \\
$\{\{1\},\{2,3\},\{4,5\},\{6\}\}$ \\
$\{\{1,6\},\{2,3\},\{4\},\{5\}\}$ \\
$\{\{1\},\{2,3,6\},\{4\},\{5\}\}$ \\
$\{\{1\},\{2,3\},\{4,6\},\{5\}\}$ 
\end{tabular}
\begin{tabular}{c}
$\pi=(1)(24)(3)(5)(6)$ \\ \hline
$\{\{1,5\},\{2,4\},\{3\},\{6\}\}$ \\
$\{\{1\},\{2,4,5\},\{3\},\{6\}\}$ \\
$\{\{1\},\{2,4\},\{3,5\},\{6\}\}$ \\
$\{\{1,6\},\{2,4\},\{3\},\{5\}\}$ \\
$\{\{1\},\{2,4,6\},\{3\},\{5\}\}$ \\
$\{\{1\},\{2,4\},\{3,6\},\{5\}\}$ 
\end{tabular}
\end{center}

Our main formula \eqref{formula.anticommutator.main.2} yields
\begin{align*}
\freec{1,2}{ab+ba}=\freec{2,1}{ab+ba} =&\, 4\freec{3}{a}\freec{3}{b} + 12\freec{1}{a}\freec{2}{a}\freec{3}{b}+12\freec{1}{b}\freec{2}{b}\freec{3}{a}
+4\freec{2,1}{a}\freec{2}{b}\freec{1}{b}+4\freec{2,1}{b}\freec{2}{a}\freec{1}{a}\\
&+8\freec{2,1}{a}(\freec{1}{b})^3+8\freec{2,1}{b}(\freec{1}{a})^3
+16\freec{1,1}{a}\freec{1}{a}\freec{2}{b}\freec{1}{b}+16\freec{1,1}{b}\freec{1}{b}\freec{2}{a}\freec{1}{a}.   
\end{align*}

\section{Study of the indexing set}\label{Section: detail study of the sets}

From the last section, it is clear that even for small values of $n$ and $m$, the indexing set $\JJ_{2n,2m}$ is not simple. The goal of this section is to further understand the permutations in this set. We will do this using three different approaches, first we directly study the partitions $\pi$ in $\JJ_{2n,2m}$, then we study them through their Kreweras complement $\pi^{-1}\gammanm$, and finally we study the permutations $I\pi$ where $I:=(1,2)(3,4)(5,6)\cdots (2n+2m-1,2n+2m)\in S_{NC}(2n,2m)$.

\subsection{ \texorpdfstring{$\JJ_{2n,2m}$}{J(2n,2m)} and connected graphs}
Let us recall that in the first order version of our main result the sum is indexed by connected graphs, or equivalently, by  \ref{P1}, $\pi \lor I_{2n}=1_{2n}$. The latter is also equivalent to the condition that $\pi^{-1}\gamma_{2n}$ separates even, see Lemma \cite[Lemma 14]{mingo2009second}. In the second order case, this is no longer true, in Lemma \ref{Lemma: characterization of connected} below we explicitly find its equivalent statement in the annulus case. By Proposition \ref{Proposition: separates implies connected} we know that if $\pi^{-1}\gammanm$ separates even, this implies that $\GG_\pi$ is connected, but the converse is not true. Naturally, one might wonder how small is this set compared to the set of connected graphs. To answer this question, in this section we prove that every $\pi\in S_{\NC}(2n,2m)$ such that $\GG_\pi$ is connected must be of one of the following two types:
\begin{enumerate}[label=(C\arabic*)]
    \item $\pi^{-1}\gammanm$ separates even, or \label{C1}
    \item $\pi^{-1}\gammanm$ separates even except two even number $r,s$ where $r\in [2n]$ and $s\in [2m]$. \label{C2}
\end{enumerate}
This provides a better understanding of the possible non-crossing pairings for which the graph is connected. Recall that we use the notation $I$, $\boldsymbol{1}$ and $\gamma$ from \eqref{eq:not.I}, \eqref{eq:not.1}, and \eqref{eq:not.gamma}. Further, for any two permutations $\pi,\sigma\in S_{2n+2m}$ we may use the notation $\pi\vee\sigma := \Part{\pi}\vee \Part{\sigma}$, where $\vee$ is the join in the lattice $\cP(2n+2m)$ as in Definition \ref{defi.joins}.

\begin{proposition}\label{Proposition: separates implies connected 2}
Let $\pi\in S_{\NC}(2n,2m)$ such that $\Part{\pi}\vee I=\boldsymbol{1}$. Then there is no $r,s$ even in the same cycle of $\gammanm$ such that both are in the same cycle of $\pi^{-1}\gammanm$.
\end{proposition}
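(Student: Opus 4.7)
I will argue by contradiction. Suppose, toward a contradiction, that there exist even $r,s$ in the same cycle of $\gammanm$ (WLOG both in $[2n]$ with $r<s$, the case $r,s\in[2m]$ being symmetric) such that $r$ and $s$ belong to the same cycle of $\sigma:=\pi^{-1}\gammanm$. The goal is to exhibit a two-block partition of $[2n+2m]$ that is $\geq \Part{\pi}\vee I$ and strictly below $\boldsymbol{1}$, contradicting the hypothesis $\Part{\pi}\vee I = \boldsymbol{1}$. Set $A:=\{r+1,r+2,\ldots,s\}$ and $B:=[2n+2m]\setminus A$. Because $r$ and $s$ are both even, $A$ is the disjoint union of the $I$-blocks $\{r+1,r+2\},\{r+3,r+4\},\ldots,\{s-1,s\}$, so $I\leq\{A,B\}$. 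Hence it suffices to prove $\Part{\pi}\leq\{A,B\}$.

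To attack $\Part{\pi}\leq\{A,B\}$, I would introduce the auxiliary permutation $\gammanm':=\gammanm\cdot(r,s)$, whose cycle decomposition consists of exactly three cycles $A=(r+1,\ldots,s)$, $B_1:=(1,\ldots,r,s+1,\ldots,2n)$, and $B_2:=(2n+1,\ldots,2n+2m)$. Since $r$ and $s$ lie in the same cycle of $\sigma$, right multiplication by $(r,s)$ splits that cycle, giving $\#(\pi^{-1}\gammanm')=\#(\sigma(r,s))=\#(\sigma)+1$. Combining this with the annular non-crossing identity $\#(\pi)+\#(\sigma)=2n+2m$ yields
\[
\#(\pi)+\#(\pi^{-1}\gammanm')+\#(\gammanm') \;=\; 2n+2m+4.
\]
Now I apply the Riemann--Hurwitz/Euler inequality for permutation factorizations: for any triple of permutations $\alpha,\beta,\delta\in S_N$ with $\alpha\beta\delta=e$, one has $\#(\alpha)+\#(\beta)+\#(\delta)\leq N+2k$, where $k$ is the number of orbits of $\langle\alpha,\beta\rangle$ on $[N]$. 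Applied to $\alpha=\pi$, $\beta=\pi^{-1}\gammanm'$, $\delta=(\gammanm')^{-1}$ on $N=2n+2m$, the above equality forces $2k\geq 4$, i.e.\ $\Part{\pi}\vee\Part{\gammanm'}$ has at least two blocks.

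Since $\Part{\pi}\vee\Part{\gammanm'}$ is a coarsening of $\Part{\gammanm'}=\{A,B_1,B_2\}$ having $\geq 2$ blocks, it equals one of $\{A,B_1,B_2\}$, $\{A,B_1\cup B_2\}$, $\{A\cup B_1,B_2\}$, or $\{A\cup B_2,B_1\}$. The possibility $\{A\cup B_1,B_2\}=\{[2n],[2m]\}=\Part{\gammanm}$ is ruled out immediately, since otherwise $\Part{\pi}\leq\Part{\gammanm}$ would force $\Part{\pi}\vee\Part{\gammanm}=\Part{\gammanm}\neq \boldsymbol{1}$, contradicting $\pi\in S_{\NC}(2n,2m)$. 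In each of the remaining three possibilities, $\Part{\pi}$ is coarsened by the two-block partition $\{A,B_1\cup B_2\}=\{A,B\}$ (in the first two cases) or $\{A\cup B_2,B_1\}$ (in the last case); a direct check, using once more that $r$ and $s$ are even, confirms that each of these two-block partitions is also $\geq I$. Consequently $\Part{\pi}\vee I$ is strictly coarser than $\boldsymbol{1}$, delivering the desired contradiction.

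The delicate step is the Euler inequality invoked in the second paragraph; it is standard (a consequence of Riemann--Hurwitz for dessins d'enfants, see e.g.\ Lando--Zvonkin), but if a self-contained derivation is preferred one can instead restrict $\pi$ and $\gammanm'$ to each orbit of $\langle\pi,\gammanm'\rangle$ and apply on each orbit the general Biane-type bound $\#(\pi|_{O})+\#((\pi|_{O})^{-1}\gammanm'|_{O})\leq|O|+\#(\gammanm'|_{O})$ (which underlies Lemma 5 of Mingo--Nica used in the proof of Proposition \ref{Proposition: separates implies connected}), then sum over the orbits.
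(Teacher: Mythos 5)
Your proposal is correct and follows essentially the same route as the paper: both arguments multiply $\gammanm$ by the transposition $(r,s)$, observe that this raises the cycle count of $\pi^{-1}\gammanm$ by one so that $\#(\pi)+\#(\pi^{-1}\gammanm(r,s))+\#(\gammanm(r,s))=2n+2m+4$, and then invoke the genus/Euler inequality (the paper cites it as Equation 2.10 of Mingo--Nica) to force $\Part{\pi}\vee\Part{\gammanm(r,s)}$ to have at least two blocks, contradicting $\Part{\pi}\vee I=\boldsymbol{1}$ since $I\leq\Part{\gammanm(r,s)}$ when $r,s$ are even. Your closing case analysis is slightly more elaborate than the paper's one-line observation that $I\leq\gammanm(r,s)\leq\pi\vee\gammanm(r,s)$, but the substance is identical.
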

\begin{proof}
Suppose there exist $r,s$ in the same cycle of $\gammanm$ and the same cycle of $\pi^{-1}\gammanm$. By Lemma \ref{Lemma: equality if and only if less or equal than} it follows  that $|(r,s)|+|(r,s)\pi^{-1}\gammanm|=|\pi^{-1}\gammanm|.$ Thus
\begin{eqnarray*}
|\pi|+|\pi^{-1}\gammanm(r,s)| &=& |\pi|+|\pi^{-1}\gammanm|-|(r,s)| \\
&=& 2n+2m-\#(\pi)+2n+2m-\#(\pi^{-1}\gammanm)-1 \\
&=& 2n+2m-1.
\end{eqnarray*}
We conclude that
$$\#(\pi)+\#(\pi^{-1}\gammanm(r,s))=2n+2m+1.$$
On the other hand, from \cite[Equation 2.10]{mingo2004annular} we know that
$$\#(\pi)+\#(\pi^{-1}\gammanm(r,s))+\#(\gammanm(r,s))\leq 2n+2m+2\#(\pi \vee \gammanm(r,s)),$$
so
$$2n+2m+1+\#(\gammanm(r,s)) \leq 2n+2m+2\#(\pi \vee \gammanm(r,s)).$$
Recall that from hypothesis, $r,s$ are in the same cycle of $\gammanm$, so
$$2n+2m+1+3= 2n+2m+1+\#(\gammanm(r,s)) \leq 2n+2m+2\#(\pi \vee \gammanm(r,s)),$$
from where we conclude $\#(\pi \vee \gammanm(r,s)) \geq 2$. Finally, it is clear that $\pi$ must meet at least two clycles of $\gammanm(r,s)$ as otherwise $\pi\vee \gammanm \neq \boldsymbol{1}$. Therefore, it must hold that $\#(\pi \vee \gammanm(r,s))=2$, but $\Inm\leq \gammanm(r,s) \leq \pi \vee \gammanm(r,s)$ implies that $\Inm\vee \pi\leq \pi\vee\gammanm(r,s)$, which contradicts $\Inm\vee \pi =\boldsymbol{1}$.
\end{proof}

\begin{lemma}\label{Lemma: characterization of connected}
Let $\pi\in S_{\NC}(2n,2m)$. Then $\Part{\pi}\vee \Inm=\boldsymbol{1}$ if and only if either $\pi^{-1}\gammanm$ separates even or $\pi^{-1}\gammanm$ separates even number except by two even numbers $r,s$ with $r\in [2n]$ and $s\in [2m]$.
\end{lemma}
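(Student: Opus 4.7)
The plan is to prove both implications. For the $(\Leftarrow)$ direction, the case that $\pi^{-1}\gammanm$ separates even is already handled by Proposition \ref{Proposition: separates implies connected}. For the remaining case, where $\pi^{-1}\gammanm$ separates even except for an exceptional pair $\{r,s\}$ with $r\in [2n]$ and $s\in[2m]$, I would adapt that proof. Suppose for contradiction that $\Part{\pi}\vee \Inm\leq\{A,B\}<\boldsymbol{1}$. The subcase $\{A,B\}=\{[2n],[2m]\}$ is impossible since it forces $\Part{\pi}\leq \{[2n],[2m]\}$, contradicting $\pi\vee\gammanm=\boldsymbol{1}$. In the remaining subcase where one block meets both cycles of $\gammanm$, write $\gamma_A\gamma_B=\gammanm\prod_{i}(u_i,u_{i+1})\prod_{j}(v_j,v_{j+1})$ exactly as in Proposition \ref{Proposition: separates implies connected}, with all $u_i\in [2n]$ and $v_j\in [2m]$ even. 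The exceptional cycle of $\pi^{-1}\gammanm$ still contains at most one element of $[2n]$ and at most one of $[2m]$, so each such transposition still joins two different cycles of $\pi^{-1}\gammanm$ and decreases the cycle count by one. The counting argument from Proposition \ref{Proposition: separates implies connected} then goes through verbatim and yields a contradiction.

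For the $(\Rightarrow)$ direction, assume $\Part{\pi}\vee\Inm=\boldsymbol{1}$ and let $\sigma:=\pi^{-1}\gammanm$. By Proposition \ref{Proposition: separates implies connected 2}, no cycle of $\sigma$ contains two even elements belonging to a single cycle of $\gammanm$. So what must be ruled out is the existence of two distinct cycles $C_1,C_2$ of $\sigma$, each containing one even element from $[2n]$ and one from $[2m]$; write $r_i\in C_i\cap[2n]$ and $s_i\in C_i\cap[2m]$. Define $\tau:=\gammanm(r_1,s_1)(r_2,s_2)$. The first transposition merges the two cycles of $\gammanm$ into one, and the second then splits that cycle, so $\tau$ has exactly two cycles.

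Next, since $r_i,s_i$ lie in the same cycle $C_i$ of $\sigma$ and $C_1\neq C_2$, multiplying $\sigma$ on the right by $(r_1,s_1)$ and then by $(r_2,s_2)$ splits two different cycles, so $\#(\pi^{-1}\tau)=\#(\sigma)+2$. Combined with $\pi\in S_{\NC}(2n,2m)$, this gives $|\pi|+|\pi^{-1}\tau|=|\tau|$, and Lemma \ref{Lemma: equality if and only if less or equal than} yields $\pi\leq\tau$, hence $\Part{\pi}\leq\Part{\tau}$. The proof then reduces to showing $\Part{\tau}\vee\Inm=\Part{\tau}$, which has two blocks and therefore contradicts $\Part{\pi}\vee\Inm=\boldsymbol{1}$.

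To verify this last reduction, I would trace the cycles of $\tau$ explicitly. The single cycle of $\gammanm(r_1,s_1)$ is obtained from $\gammanm$ by exchanging the successors of $r_1$ and $s_1$; since both are even, each pair $\{2i-1,2i\}$ remains adjacent in this cycle. The subsequent transposition $(r_2,s_2)$ also acts at even positions, so in each of the two resulting cycles the element immediately preceding $r_2$ (respectively $s_2$) in the single cycle, namely the odd number $r_2-1$ (respectively $s_2-1$), stays together with $r_2$ (respectively $s_2$). Consequently every pair $\{2i-1,2i\}$ is contained in a single cycle of $\tau$, yielding $\Part{\tau}\vee\Inm=\Part{\tau}$ as needed. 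The main technical obstacle is precisely this last bookkeeping step, since it requires checking that neither of the two cuts introduced when forming $\tau$ separates a pair $\{2i-1,2i\}$.
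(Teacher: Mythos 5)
Your proof is correct, but it takes a genuinely different route from the paper's. The paper disposes of both implications with a single reduction to the disk: since $r\in[2n]$ and $s\in[2m]$, the permutation $\gammanm(r,s)$ has one cycle, the count $\#(\pi)+\#(\pi^{-1}\gammanm(r,s))=2n+2m-1$ shows $\pi\in\NC(\gammanm(r,s))$, and the first-order equivalence of \cite[Lemma 14]{mingo2009second} (on the disk, $\pi\vee \Inm$ maximal iff $\pi^{-1}\gamma$ separates even) then yields both directions in two lines. You instead (i) rerun the counting argument of Proposition \ref{Proposition: separates implies connected} for the backward implication, checking that one cross-annulus exceptional pair does not spoil the identity $\#(\pi^{-1}\gamma_A\gamma_B)=\#(\pi^{-1}\gammanm)-r-s$; this is right, though the one point worth spelling out is that the chain of $u$-transpositions can pre-merge at most one of the $v$-cycles (only when some $u_i=r$ and some $v_j=s$ lie in the exceptional cycle), which is why every subsequent transposition still joins two distinct cycles; and (ii) for the forward implication you rule out two exceptional cycles directly by exhibiting the two-cycle permutation $\tau=\gammanm(r_1,s_1)(r_2,s_2)$ satisfying $\pi\le\tau$ (via Lemma \ref{Lemma: equality if and only if less or equal than}, using $\#(\pi^{-1}\tau)=\#(\pi^{-1}\gammanm)+2$) and $\Inm\le\Part{\tau}$ (all of $r_1,s_1,r_2,s_2$ are even, so $\tau(2i-1)=2i$ for every $i$), which contradicts $\Part{\pi}\vee\Inm=\boldsymbol{1}$. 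Both arguments are valid; yours is longer but more self-contained, never invoking the disk lemma in the forward direction and making the geodesic bookkeeping explicit, whereas the paper buys brevity by outsourcing the combinatorial core to the known first-order statement.
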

\begin{proof}
Assume first $\Part{\pi}\vee \Inm=\boldsymbol{1}$. If $\pi^{-1}\gammanm$ separates even we are done, otherwise there exist $r,s$ even in the same cycle of $\pi^{-1}\gammanm$. By Proposition \ref{Proposition: separates implies connected 2} it must be $r\in [2n]$ and $s\in [2m]$. We will show that $\pi^{-1}\gammanm$ separates even numbers except by $r$ and $s$. Notice that
\begin{eqnarray*}
|\pi|+|\pi^{-1}\gammanm(r,s)| &=& |\pi|+|\pi^{-1}\gammanm|-|(r,s)| \\
&=& 2n+2m-\#(\pi)+2n+2m-\#(\pi^{-1}\gammanm)-1 \\
&=& 2n+2m-1 \\
&=& |\gammanm(r,s)|.
\end{eqnarray*}
Thus $\pi\in \NC(\gammanm(r,s))$ and $\pi\vee \Inm=\boldsymbol{1}=\Part{\gammanm(r,s)}$, by \cite[Lemma 14]{mingo2009second} we conclude $\pi^{-1}\gammanm(r,s)$ separates even numbers as required.
Conversely, if $\pi^{-1}\gammanm$ separates even numbers, Proposition \ref{Proposition: separates implies connected} asserts that $\Part{\pi}\vee \Inm=\boldsymbol{1}$. So we are just left to prove that if $\pi^{-1}\gammanm$ separates even number except by two even numbers $r,s$ with $r\in [2n]$ and $s\in [2m]$, then $\Part{\pi}\vee \Inm=\boldsymbol{1}$. Indeed, note that
$$\#(\pi)+\#(\pi^{-1}\gammanm(r,s))=\#(\pi)+\#(\pi^{-1}\gammanm)-1=2n+2m-1.$$
Thus $\pi\in \NC(\gammanm(r,s))$ and $\pi^{-1}\gammanm(r,s)$ separates even numbers, by \cite[Lemma 14]{mingo2009second} it follows that $\pi\vee \Inm=\Part{\gammanm(r,s)}=\boldsymbol{1}$.
\end{proof}

From Lemma \ref{Lemma: characterization of connected} it follows that $\GG_{\pi}$ is connected if and only if $\pi$ satisfies either \ref{C1} or \ref{C2}.

\subsection{Understanding the Kreweras complement of \texorpdfstring{$\JJ_{2n,2m}$}{J(2n,2m)} }

Another way to understand the indexing set $\JJ_{2n,2m}$ is through its Kreweras complement $Kr_{n,m}(\JJ_{2n,2m})$. Namely, the set
$$\KK_{2n,2m}:=\{\sigma\in S_{\NC}(2n,2m): \GG_{\gammanm\sigma^{-1}}\mbox{ is bipartite and } \sigma \text{ separates even} \}.
$$

For the remaining of the paper, we will denote by 
\begin{equation}
\label{eq:not.even.odd}
\EE:=\{2,4,6,\dots,2n+2m\} \qquad \text{and} \qquad \OO:=\{1,3,5,\dots,2n+2m-1\}
\end{equation}
the sets of even and odd numbers, respectively.

\begin{proposition}
If $\sigma\in\KK_{2n,2m}$ has a cycle $C=(c_1c_2\cdots c_r)$ with only odd numbers $c_1,c_2,\dots, c_r\in \OO$, then $C$ has even size ($r$ is even).
\end{proposition}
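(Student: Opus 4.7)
The plan is to reinterpret the cycle $C$ of $\sigma$ as a closed walk in the bipartite graph $\GG_\pi$, where $\pi := \gammanm\sigma^{-1}$, and then invoke the standard fact that closed walks in bipartite graphs have even length.

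First I would set $\pi := \gammanm\sigma^{-1}$. By the very definition of $\KK_{2n,2m}$, the graph $\GG_\pi$ is bipartite, and $\sigma=\pi^{-1}\gammanm$ separates even numbers, so $\pi\in\JJ_{2n,2m}$. For each $i\in\{1,\dots,r\}$ (indices mod $r$), the relation $\sigma(c_i)=c_{i+1}$ rewrites as
\[
\pi(c_{i+1})=\gammanm(c_i).
\]
Since $c_i$ is odd and belongs to either $[2n]$ or $[2m]$, one checks directly that $c_i \notin\{2n,2n+2m\}$ and hence $\gammanm(c_i)=c_i+1$. In particular $\{c_i,c_i+1\}$ is one of the pairs $\{2k-1,2k\}$ which define the edges of $\GG_\pi$.

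Let $B(x)$ denote the cycle (block) of $\pi$ that contains $x$. By construction of $\GG_\pi$, the pair $\{c_i, c_i+1\}$ contributes an edge between $B(c_i)$ and $B(c_i+1)$. On the other hand, $c_i+1=\gammanm(c_i)=\pi(c_{i+1})$ lies in the same cycle of $\pi$ as $c_{i+1}$, so $B(c_i+1)=B(c_{i+1})$. Chaining these edges,
\[
B(c_1)\;-\;B(c_2)\;-\;B(c_3)\;-\;\cdots\;-\;B(c_r)\;-\;B(c_1)
\]
is a closed walk of length $r$ in $\GG_\pi$.

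Finally, because $\GG_\pi$ is bipartite, every edge crosses the bipartition, so any closed walk must have an even number of edges; hence $r$ is even. I do not expect a substantial obstacle: the only points that need a line of verification are that $\gammanm(c_i)=c_i+1$ really holds for every odd $c_i$ in either arc (automatic, since the only odd fixed-adjacencies of $\gammanm$ are $2n-1\mapsto 2n$ and $2n+2m-1\mapsto 2n+2m$, both of which are of the required form $c_i\mapsto c_i+1$), and that each step of the walk is a genuine edge of $\GG_\pi$, which follows because bipartiteness excludes loops so in particular $B(c_i)\neq B(c_i+1)$.
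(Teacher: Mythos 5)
Your proof is correct and is essentially the paper's argument in different packaging: the paper shows that $\sigma=\pi^{-1}\gammanm$ sends odd elements of one part $A'=\cup\pi'$ into the other part $A''=\cup\pi''$ (using exactly your two observations, that $\gammanm(c_i)=c_i+1$ crosses an edge of $\GG_\pi$ and that $\pi^{-1}$ preserves each part), so the cycle alternates between the parts and must have even length. Your closed-walk-in-a-bipartite-graph formulation is the same parity argument, so there is nothing to add.
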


\begin{proof}
Let us denote $\pi= \gammanm\sigma^{-1}$. Recall from Definition \ref{defi.Graph.pi.2} that $\pi\in\JJ_{2n,2m}$ has a natural bipartite decomposition $\pi = \pi ' \sqcup \pi ''$ where blocks in the same part do not share an edge of $\GG_\pi$. Let us denote $A'=\cup\pi'$ and $A''=\cup\pi''$. Clearly $A'\cup A''=[2n+2m]$ and $A'\cap A''=\emptyset$. Moreover, for each $i=1,\dots,2n+2m$, we have that $|A'\cap \{2i-1,2i\}|= 1 = |A''\cap \{2i-1,2i\}|$. In other words, one element of $\{2i-1,2i\}$ is in $A'$ and the other is in $A''$. In particular,
$$\gammanm(A'\cap \OO)=A''\cap \EE, \qquad \text{and} \qquad \gammanm(A''\cap \OO)=A'\cap \EE.$$ 

On the other hand, by how $A'$ and $A''$ are constructed, we also know that $\pi(A') =A'$ and $\pi(A'') =A''$. Equivalently
$$\pi^{-1}( A') =A', \qquad \text{and}\qquad \pi^{-1}( A'') =A''.$$ 
Putting these two together, we obtain that $\sigma=\pi^{-1}\gammanm$ satisfies
\begin{equation}
\sigma(A'\cap \OO) \subset A'', \qquad \text{and} \qquad \sigma(A''\cap \OO)\subset A'.
\end{equation}
In particular, if $C=(c_1c_2\dots c_r)$ is a cycle of $\sigma$ with only odd elements, then the elements of $C$ alternate between $A'$ and $A''$. Namely $c_{2j+1}\in A'$ and $c_{2j}\in A''$ or viceversa ($c_{2j+1}\in A''$ and $c_{2j}\in A'$). In any case, since $c_r$ and $c_1=\sigma(c_r)$ belong to different sets, we conclude that $r$ is even. Thus, the cycles of $\sigma$ containing only odd elements are of even size.
\end{proof}

\begin{remark}
The previous result tells us that necessary conditions for $\sigma$ to be in $\KK_{2n,2m}$ are that $\sigma$ separates even numbers, and the cycles of $\sigma$ with only odd numbers are of even size. Despite these two conditions where sufficient in the first order case, see \cite[Proposition 1.2]{perales2021anti}, these conditions are not sufficient in the second order case. For instance,
$\sigma= (13)(2)(4)\in S_{\NC}(2,2) $ separates even numbers, and the cycles with only odd numbers have even size (in this case only $(13)$). However, $\sigma\notin \KK_{2n,2m}$ because the graph associated to
$\pi:=\gamma_{2,2} \sigma^{-1} = (1432)$ consists of one vertex with two loops, and it is not bipartite. 
\end{remark}

One could also determine necessary and sufficient conditions for particular cases of permutations $\sigma$ so that $\sigma\in \KK_{2n,2n}$. A detailed study of the graph associated with $\sigma$ shows that the cycles of this graph are almost determined by the cycles of $\sigma$ with only odd elements. 

\begin{proposition}
Let $\sigma\in S_{\NC}(2n,2m)$ be such that $\sigma$ separates even and it has a single through cycle. Further, this cycle has only odd numbers. By \cite[Remark 3.4]{mingo2004annular} we write this cycle as, $$C=(o_1,\dots,o_l,o_{l+1},\dots,o_{l+s}),$$
where $o_1,\dots,o_l\in [2n]$ and $o_{l+1},\dots,o_{l+s}\in [2m]$. Then $\sigma\in \KK_{2n,2m}$ if and only if the following satisfy
\begin{enumerate}
    \item the cycles of $\sigma$ with only odd numbers have even size
    \item $l$ and $s$ are even.
\end{enumerate}
\end{proposition}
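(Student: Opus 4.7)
The necessity of condition (1) is already contained in the previous proposition. For the necessity of (2), I would extend the unique bipartition $\{V'_{\GG_\pi}, V''_{\GG_\pi}\}$ of the connected bipartite graph $\GG_\pi$ (with $\pi = \gammanm\sigma^{-1}$) to a bipartition $A' \sqcup A'' = [2n+2m]$ by labelling each element with the colour of its $\pi$-cycle. As in the proof of the previous proposition this yields $\gammanm(A' \cap \OO) = A'' \cap \EE$ (with the symmetric relation) and, writing $\sigma = \pi^{-1}\gammanm$, the consequence $\sigma(A' \cap \OO) \subset A''$ and $\sigma(A'' \cap \OO) \subset A'$. Applying $\sigma$ step by step along the through cycle $C = (o_1, \ldots, o_{l+s})$ of odd elements, the colours of $o_1, o_2, o_3, \ldots$ alternate between $A'$ and $A''$, so closing up forces $l+s$ even.

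To upgrade this to the stronger statement that $l$ and $s$ are individually even, I plan to count how $A'$ and $A''$ split within the cycle $[2n]$ of $\gammanm$. Since $\gammanm$ preserves $[2n]$, the identity $\gammanm(A' \cap \OO) = A'' \cap \EE$ yields $|A' \cap \OO \cap [2n]| = |A'' \cap \EE \cap [2n]|$ and in particular $|A' \cap [2n]| = |A'' \cap [2n]| = n$. The odd elements of $[2n]$ are partitioned among: the $l$ elements $o_1, \ldots, o_l$ of $C$; the non-through odd-only cycles of $\sigma$ in $[2n]$, which by condition (1) have even size and hence (by the alternation rule $\sigma(A' \cap \OO) \subset A''$) split equally between $A'$ and $A''$; and the non-through cycles of $\sigma$ in $[2n]$ carrying one even number, for which a direct case analysis using $\sigma$ and $\gammanm$ likewise gives a balanced contribution of odd elements to the two classes. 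Matching this count against $|A' \cap \OO \cap [2n]| = |A'' \cap \OO \cap [2n]| = n/2$ forces the $l$ elements $o_1, \ldots, o_l$ to split equally, so $l$ is even (and hence $s = (l+s) - l$ is also even).

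For the converse, assume (1) and (2). The plan is to construct an explicit 2-colouring of $[2n+2m]$ that is monochromatic on every cycle of $\pi$ and bichromatic on every pair $\{2k-1, 2k\}$; such a colouring exhibits $\GG_\pi$ as bipartite. Starting from $o_1 \in A'$, I would propagate the colours using the forced rules $\gammanm(A' \cap \OO) = A'' \cap \EE$ and $\sigma(A' \cap \OO) \subset A''$. Along $C$ the walk of length $l+s$ closes up consistently because $l+s$ is even; along a non-through odd-only cycle of $\sigma$, condition (1) guarantees even length and consistent alternation; along a non-through cycle of $\sigma$ carrying an even number, the alternation closes up by the local analysis used above. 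Finally, because $\pi = \gammanm\sigma^{-1}$, the resulting colouring is automatically constant on the $\pi$-orbits, giving the desired bipartition of $\GG_\pi$.

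\textbf{Main obstacle.} The subtle step is passing from ``$l+s$ even'' (which follows immediately from alternation along $C$) to the separate parities of $l$ and $s$. The alternation argument on $C$ is insufficient on its own; the extra information must come from balancing $A'$ and $A''$ inside a single cycle of $\gammanm$, i.e.\ inside $[2n]$ (or $[2m]$). Carrying out this counting cleanly requires a careful case analysis of the non-through cycles of $\sigma$, especially those carrying one even element, and is the place where the ``single through cycle of odd numbers'' hypothesis is genuinely used.
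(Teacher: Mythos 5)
Your reduction of the necessity of condition (2) to a counting identity does not work, and this is exactly the step you flag as the main obstacle. The identity $|A'\cap\OO\cap[2n]|=|A''\cap\OO\cap[2n]|=n/2$ does not follow from $|A'\cap[2n]|=n$: the relation $\gammanm(A'\cap\OO)=A''\cap\EE$ only gives $|A'\cap\OO\cap[2n]|=|A''\cap\EE\cap[2n]|$, which is compatible with any split of the odd elements between the two classes. It is in fact false: take $n=3$, $m=2$ and $\sigma=(1,3,7,9)(2)(4,5)(6)(8)(10)\in S_{\NC}(6,4)$. This $\sigma$ separates even, has the single all-odd through cycle $(1,3,7,9)$ with $l=s=2$, satisfies (1) and (2), and does lie in $\KK_{6,4}$, since $\pi=\gammanm\sigma^{-1}=(1,10,7,4,6)(2,3)(5)(8,9)$ has a connected bipartite graph. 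But the bipartition gives $A'=\{1,4,6,7,10\}$ and $A''=\{2,3,5,8,9\}$, so the odd elements $\{1,3,5\}$ of $[2n]$ split as $1$ and $2$, not evenly. The same example refutes your auxiliary claim that a non-through cycle of $\sigma$ carrying one even number contributes a balanced number of odd elements to the two classes: the cycle $(4,5)$ sends its single odd element $5$ entirely to $A''$. Since the converse direction of your plan leans on the same local analysis of such cycles, it inherits the problem. (Note also that your target identity silently assumes $n$ is even, which the proposition does not.)

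The paper proceeds by an entirely different mechanism. It sets $u=\gammanm(o_l)$ and $v=\gammanm(o_{l+s})$, shows these two even numbers lie in a common cycle of $\pi$, and passes to $\pi'=\pi(u,v)$, which is non-crossing with respect to the one-cycle permutation $\gammanm(u,v)$; this reduces everything to the disk case. There, \cite[Proposition 1.2]{perales2021anti} describes the cycles of $\GG_{\pi'}$ in terms of the odd-only cycles of $\sigma$, and $\GG_\pi$ is recovered from $\GG_{\pi'}$ by merging the two vertices containing $o_1$ and $o_{l+1}$, which sit on a common cycle of $\GG_{\pi'}$ of length $l+s$. The merge splits that cycle into two cycles whose lengths are the two distances between the merged vertices, so all cycles of $\GG_\pi$ are even precisely when $l$ and $s$ are both even (together with condition (1) for the remaining cycles). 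This vertex-merging analysis is the ingredient your counting scheme is missing, and it handles both directions of the equivalence at once.
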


\begin{proof}
Let $u=\gamma\sigma^{-1}(o_{l+1})=\gamma(o_l)$ and $v=\gamma\sigma^{-1}(o_1)=\gamma(o_{l+s})$. Then both $u,v$ are even and $u\in [2n]$ and $v\in [2m]$. Let $\pi:= \gammanm\sigma^{-1}$. We claim $u$ and $v$ are in the same cycle of $\pi$. Indeed, first of all note that $o_1$ and $v$ are in the same cycle of $\pi$ as $\gammanm\sigma^{-1}(o_1)=\gammanm(o_{l+s})=v$. So it is enough to prove $o_1$ and $u$ are in the same cycle of $\pi$. To prove this, note that $u$ and $o_1$ are in the same cycle of $\gamma_{2n}\sigma|_{[2n]}^{-1}$ with $\gamma_{2n}=\gammanm|_{[2n]}=(1,\dots,2n)$. Indeed, $\gamma_{2n}\sigma|_{[2n]}^{-1}(o_1)=\gamma_{2n}(o_l)=\gammanm(o_l)=u$. However, since we have a single through cycle then $\sigma|_{[2n]}^{-1}$ and $\sigma^{-1}$ act the same in the set $[2n]\setminus\{o_1\}$. Hence, the cycles of $\gammanm_{2n}\sigma|_{2n}^{-1}$ and $\gammanm\sigma^{-1}$ that contain $u$ are of the form
$$(u,a_1,\dots,a_s,o_1)$$
and 
$$(u,a_1,\dots,a_s,o_1,b_1,\dots,b_t)$$
respectively. Here $a_i\in [2n]$ and $b_i\in[2m]$ with possibly no $b_i's$. We must note that both cycles are the same from $u$ to $o_1$ and therefore $u$ and $o_1$ are in the same cycle of $\pi$ as required.

Let $\tau=(u,v)$ and $\pi'=\pi\tau$ and $\gamma'=\gamma\tau$. A classical argument on non-crossing permutations shows $\pi'\in \NC(\gamma')$. That is,
$\#(\pi')+\#(\pi'^{-1}\gamma)=2n+2m+1.$
In the last equality we use that $u$ and $v$ are in the same cycle of $\pi$ so that $\#(\pi')=\#(\pi)+1$. Once we are back in the disk case (i.e. the permutation $\gamma'$ has only once cycle) then we are able to use all the results already obtained in \cite{perales2021anti}. In particular, \cite[Proposition 1.2]{perales2021anti} shows that the cycles of $\pi'^{-1}\gamma'$ with only odd numbers determine the cycles of $\GG_{\pi'}$. Note that the last claim follows directly from the fact that $\gamma'$ is a permutation that has only even numbers at its even positions and odd numbers at its odd positions. In addition, the edges that join a number at an odd position $d$ with $\gamma'(d)$ are still the same edges $(1,2),\dots,(2n+2m-1,2n+2m)$. Moreover, observe that the cycles with only odd numbers of $\pi'^{-1}\gamma'$ are the same as the cycles with only odd numbers of $\sigma$. In order to determine if $\GG_{\pi}$ is bipartite, it is enough to prove that all cycles of $\GG_{\pi}$ have even size. The advantage is that the graphs $\GG_{\pi'}$ and $\GG_{\pi}$ are strongly related as they have exactly the same edges and vertices except by two vertices of $\GG_{\pi'}$ which are merged into the same vertex of $\GG_{\pi}$, see for instance figure \ref{Figure: Merging one cycle}. These vertices correspond to the cycles of $\pi'$ that contain $u$ and $v$. The cycle of $\pi'$ that contains $u$ also contains $o_{l+1}$ while the cycle that contains $v$ also contains $o_1$. Part $B$ of the proof of \cite[Proposition 1.2]{perales2021anti} precisely shows that the cycles $(o_1,\dots,o_p)$ with only odd numbers of $\sigma$ correspond to a cycle of $\GG_{\pi'}$ where each $o_1$ belongs to a vertex of $\GG_{\pi'}$ and the vertices containing $o_t$ and $o_{t+1}$ are consecutive vertices in the cycle. From this fact we know that the graph $\GG_{\pi}$ is obtained from merging the vertices that contains $o_1$ and $o_{l+1}$ of $\GG_{\pi'}$ which at the same time belong to a cycle of $l+s$ size. In order to get only cycles of even size in $\GG_{\pi}$ we need two conditions to be satisfied. First, the cycles of $\sigma$ with only odd numbers distinct of $C$ must have even size. Secondly, we need to ask that $o_{1}$ and $o_{l+1}$ are at even distance, that is, $l$ and $s$ are even, so that when merging the vertices of $\GG_{\pi'}$ that belong to one cycle produces two new cycles of even size each in $\GG_\pi$.
\end{proof}

\begin{figure}[h!]
\begin{center}
\includegraphics[width=0.9\linewidth]{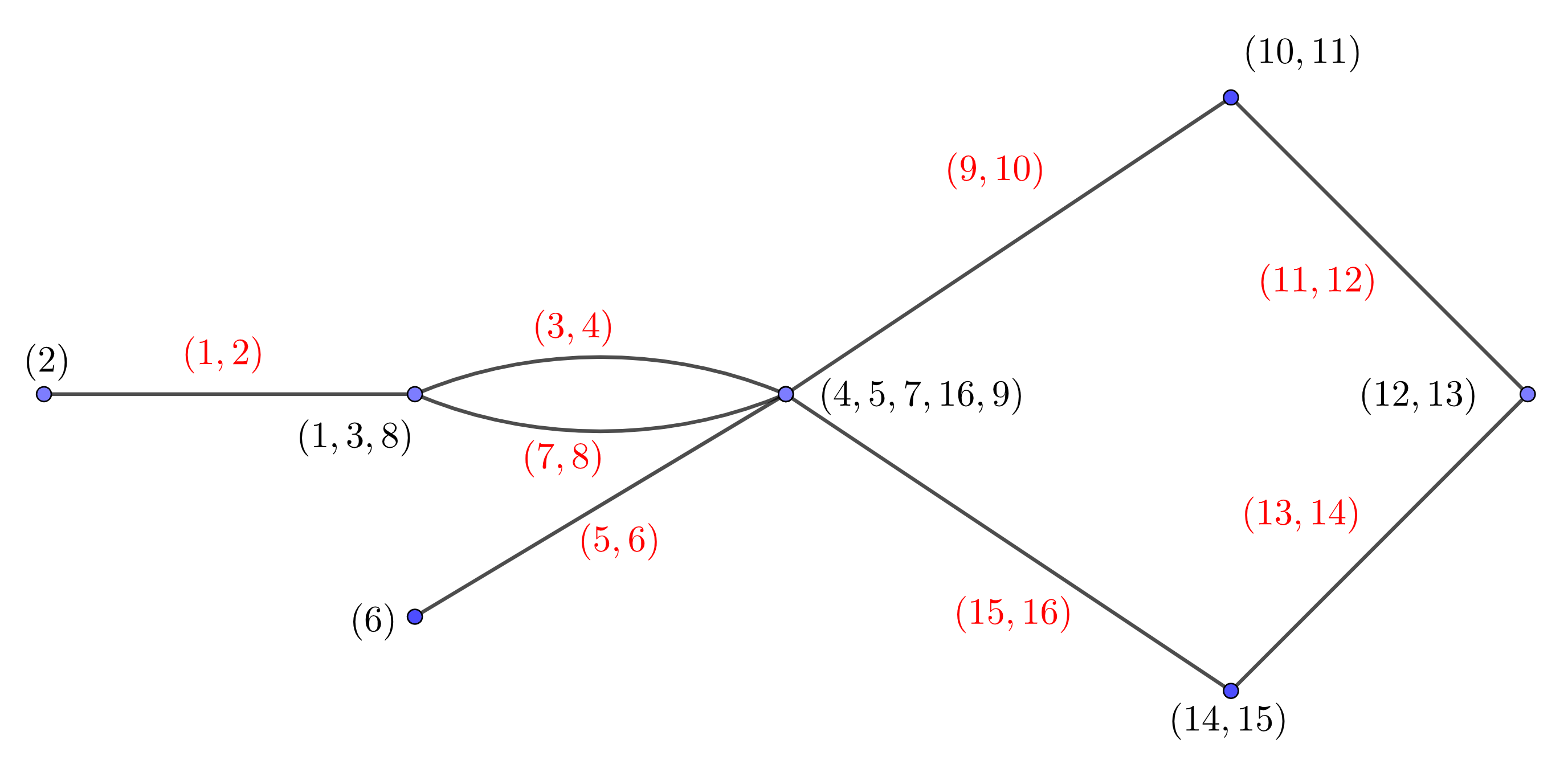}\\
\includegraphics[width=0.9\linewidth]{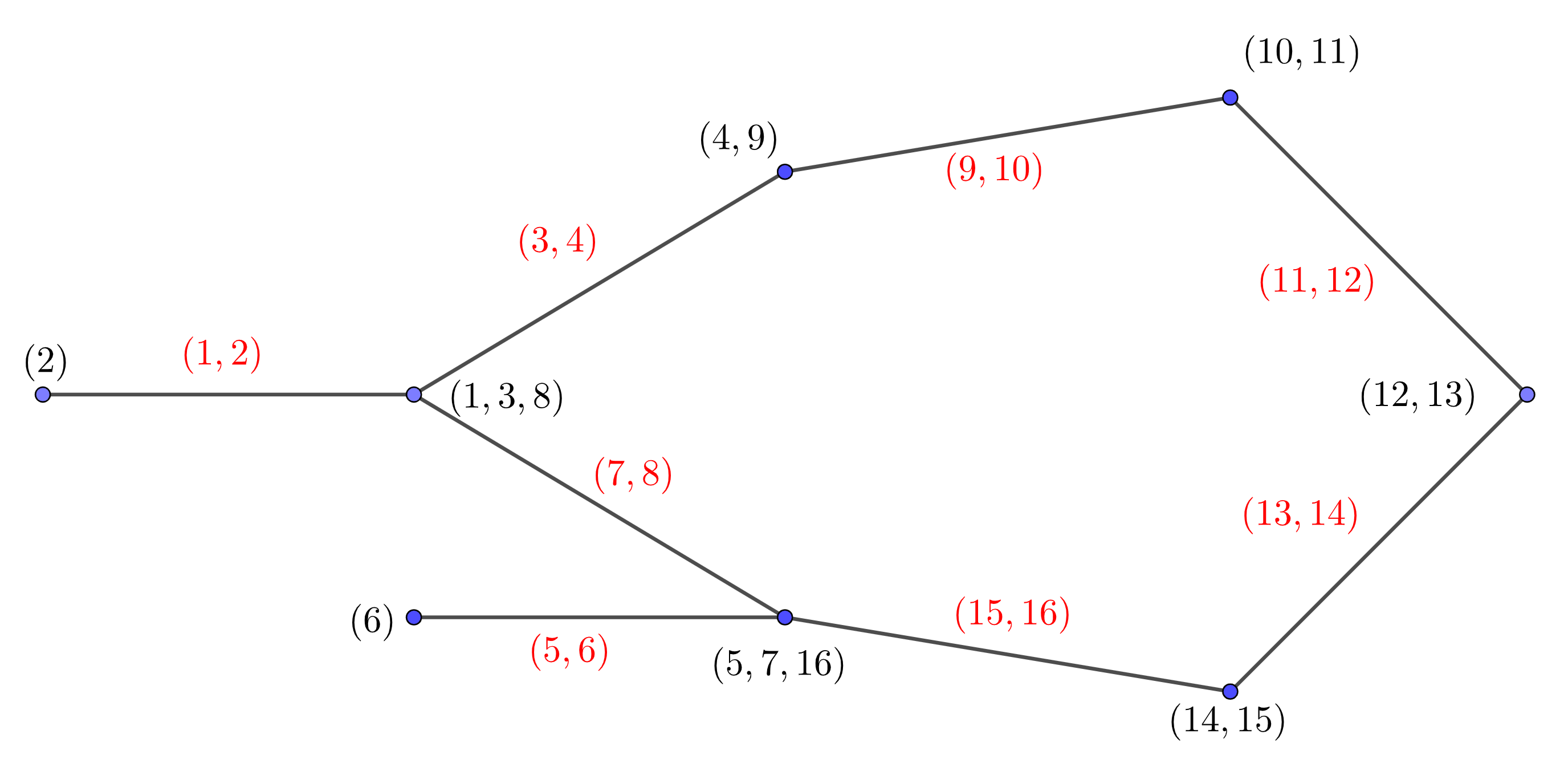}
\caption{The Graphs $\GG_{\pi}$ (up) and $\GG_{\pi'}$ (down) corresponding to $\pi=(1,3,8)(2)(4,5,7,16,9)(6)(10,11)(12,13)(14,15)$ and $\pi'=\pi\tau=\pi(4,16)$. In this case $\sigma=(7,3,9,11,13,15)(1,2)(8)(5,6)(4)(16)(14)(10)(12)$ so that $u=4$ and $v=16$. We label the vertices in black and the edges in red.}
    \label{Figure: Merging one cycle}   
\end{center}
\end{figure}

Before moving on, let us comment that the main difficulty of a general proof is having two even numbers, $u\in[2n]$ and $v\in[2m]$, in the same cycle of $\pi$ where $u=\gammanm\sigma^{-1}(o_{l+1})$ and $v=\gammanm\sigma^{-1}(o_{1})$ and such that $o_1,o_{l+1}$ are in the same cycle of $\sigma$. The latest is not always satisfied. In terms of the graphs $\GG_{\pi}$ and $\GG_{\pi'}$ this means that we merge two vertices of the graph $\GG_{\pi'}$ to obtain the graph $\GG_\pi$. When this condition is not satisfied then we must look for distinct even numbers $u\in [2n]$ and $v\in [2m]$ in the same cycle of $\pi=\gamma\sigma^{-1}$ so that the latter still holds true. In this case however, it might be possible that $o_1=\pi^{-1}(v)=\pi'^{-1}(v)$ and $o_{l+1}=\pi^{-1}(u)=\pi'^{-1}(u)$ are in distinct cycles of $\sigma$. In terms of the graphs this means that now we merge two vertices of $\GG_{\pi'}$ not necessarily in the same cycle and therefore determining if every path that connects $u$ and $v$ has even length is not that clear just from the cycle structure of $\sigma$.

\begin{remark}
One could also come up with other necessary conditions for $\sigma$ to be in $\KK_{2n,2m}$. An example of such condition is the following.

If $\sigma\in\KK_{2n,2m}$ and $\sigma$ has only one thru cycle $C$, then the cycle contains at least two elements of $[2n]$ and at least two elements of $[2m]$. 

However this condition, together with the previous ones, is not enough to guarantee that a permutation is in $\KK_{2n,2m}$. For instance, one can notice that $(18)(27)(356)(4)\in S_{\NC}(4,4)$ satisfy this and all the previous conditions, but the graph associated to $\pi:=\gamma_{4,4} \sigma^{-1} = (154)(28)(37)(6)$  is not bipartite. 
\end{remark}

An interesting question for future work, is to give a description of $\KK_{2n,2m}$ purely in terms of permutations. In other words, to express the condition that the graph associated to $\pi:=\gammanm \sigma^{-1}$ is bipartite, using only the permutations, without the need to draw the graph $\GG_\pi$.

\subsection{The action of \texorpdfstring{$I$}{I} on \texorpdfstring{$\JJ_{2n,2m}$}{J(2n,2m)}}
\label{ssec:I.pi}

To finish this section, we give another approach to better understand the graph $\GG_\pi$ using permutations. This approach will be very useful in Section \ref{secc:examples} to give a concise proof of Proposition \ref{Proposition: example of semicircles.intro} and to prove Theorem \ref{Thm.commutator.main.2.intro}.
The idea is to study the permutation $I\pi$ where $I$ is the unique permutation that pairs consecutive elements 
$$
I:=(12)(34)(56)\cdots (2n+2m-1,2n+2m)\in S_{NC}(2n,2m).
$$
Two simple but useful facts are that $I^2=id$ is the identity permutation, and $I$ changes the parity of the numbers, namely, $I(\EE)=\OO$ and $I(\OO)=\EE$.

\begin{proposition}
\label{prop:I.pi}
 Let $\pi\in\JJ_{2n,2m}$. Then the permutation $I\pi$ has cycle decomposition
 $$I\pi=C^{\text{out}}C^{\text{inn}}O_1 \dots O_k,$$
where:
 \begin{enumerate}
     \item Every cycle has even length.
     \item $C^{\text{out}}$ is a cycle containing all the elements $2,4,\dots, 2n$ appearing in increasing order (with possibly odd numbers in between).
     \item $C^{\text{inn}}$ is a cycle containing all the elements $2n+2,2n+4,\dots, 2n+2m$ in increasing order (with possibly odd numbers in between). 
     \item The cycles $O_1, \dots, O_k$ contain only odd elements, and coincide precisely with the cycles of $\gamma^{-1}\pi$ that only contain odd elements. 
 \end{enumerate}
\end{proposition}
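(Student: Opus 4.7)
The plan is to establish the four claims through a bipartiteness argument for claim (1) together with a direct computation tracing how $I\pi$ acts on the cycles of $\pi^{-1}\gamma$. The underlying observation is that the ``separates even'' condition built into the definition of $\JJ_{2n,2m}$ forces every cycle of $\pi^{-1}\gamma$ to contain at most one even element, which in turn rigidifies how $I\pi$ moves through the evens.

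For claim (1), I would use that $\GG_\pi$ is bipartite to write $\pi=\pi'\sqcup\pi''$ and set $A':=\bigcup\pi'$, $A'':=\bigcup\pi''$. Then $[2n+2m]=A'\sqcup A''$ with exactly one of $2i-1,2i$ on each side. Since $\pi$ preserves this partition while $I$ swaps it, the product $I\pi$ swaps $A'$ and $A''$, so every cycle of $I\pi$ alternates between them and therefore has even length.

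Next, for claims (2)--(3), I would analyze a cycle $D=(e,o_1,\dots,o_r)$ of $\pi^{-1}\gamma$, where $e$ is the unique even element guaranteed by $\pi\in\JJ_{2n,2m}$. Using the relations $\pi(o_{i+1})=\gamma(o_i)$, $\pi(e)=\gamma(o_r)$, and $\pi(o_1)=\gamma(e)$, combined with the fact that $\gamma$ always changes parity (so $I(\gamma(o_i))=o_i$ whenever $o_i$ is odd, while $I(\gamma(e))=\gamma^2(e)$), one computes directly that $I\pi$ traverses $D$ as $e\mapsto o_r\mapsto o_{r-1}\mapsto\cdots\mapsto o_1\mapsto \gamma^2(e)$. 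Iterating this, the $I\pi$-cycle containing $2$ visits all the evens of $[2n]$ in the order $2,4,\dots,2n$ (with odd elements in between coming from the corresponding cycles of $\pi^{-1}\gamma$), and analogously the $I\pi$-cycle containing $2n+2$ sweeps through the evens of $[2m]$ in increasing order, pinning down $C^{\text{out}}$ and $C^{\text{inn}}$.

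For claim (4), an analogous computation on an odd-only cycle $O=(p_1,\dots,p_t)$ of $\pi^{-1}\gamma$ shows that $I\pi(p_{i+1})=p_i$, yielding a cycle $(p_1,p_t,\dots,p_2)$ of $I\pi$ with exactly the same elements; but this is precisely the corresponding cycle of $\gamma^{-1}\pi=(\pi^{-1}\gamma)^{-1}$, so the $O_i$'s literally coincide with the odd-only cycles of $\gamma^{-1}\pi$. Since all evens lie in $C^{\text{out}}\sqcup C^{\text{inn}}$, the remaining $I\pi$-cycles automatically contain only odd elements, which finishes the decomposition. I do not anticipate a major conceptual obstacle; the delicate point is the careful handling of the wraparounds $\gamma(2n)=1$ and $\gamma(2n+2m)=2n+1$, but these still change parity and fit uniformly into the formula so that $C^{\text{out}}$ and $C^{\text{inn}}$ close up correctly.
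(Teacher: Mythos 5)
Your proposal is correct and follows essentially the same route as the paper: claim (1) via the bipartition $A'\sqcup A''$ swapped by $I$ and preserved by $\pi$, and claims (2)--(4) by tracking how $I\pi$ moves through the cycles of the Kreweras complement (the paper phrases this as the factorization $I\pi=(I\gamma)(\gamma^{-1}\pi)$ with $I\gamma$ fixing the odds, which is the same computation as your orbit-tracing $e\mapsto o_r\mapsto\cdots\mapsto o_1\mapsto\gamma^2(e)$). No gaps; your handling of the wraparounds and of the odd-only cycles matches the paper's argument.
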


\begin{proof}
To prove part 1, recall from Definition \ref{defi.Graph.pi.2} that $\pi\in\JJ_{2n,2m}$ has a natural bipartite decomposition $\pi = \pi ' \sqcup \pi ''$  where blocks in the same part do not share an edge of $\GG_\pi$. Let us denote $A'=\cup\pi'$ and $A''=\cup\pi''$. Clearly $A'\cup A''=[2n+2m]$ and $A'\cap A''=\emptyset$. Moreover, for each $i=1,\dots,2n+2m$, we have that $|A'\cap \{2i-1,2i\}|= 1 = |A''\cap \{2i-1,2i\}|$. In other words, one element of $\{2i-1,2i\}$ is in $A'$ and the other is in $A''$. In particular,
$$I(A')=A'', \qquad \text{and} \qquad I(A'')=A'.$$ 
On the other hand, by how $A'$ and $A''$ are constructed, we also know that 
$$\pi( A') =A', \qquad \text{and}\qquad \pi( A'') =A''.$$ 
Putting these two together, we obtain that $I \pi$ satisfies
\begin{equation}
I \pi(A') \subset A'', \qquad \text{and} \qquad I \pi(A'')\subset A'.
\end{equation}
In particular, if $C=(c_1c_2\dots c_r)$ is a cycle of $I \pi$, then the elements of $C$ alternate between $A'$ and $A''$. Namely $c_{2j+1}\in A'$ and $c_{2j}\in A''$ or viceversa ($c_{2j+1}\in A''$ and $c_{2j}\in A'$). In any case, since $c_r$ and $c_1=I\pi(c_r)$ belong to different parts ($A'$ or $A''$), we conclude that $r$ is even. Thus, the cycles of $I \pi$ are of even size.

To prove the remaining parts, recall that $\pi\in \JJ_{2n,2m}$ implies that $\pi^{-1}\gammanm$ separates even elements, thus its inverse $\gammanm^{-1}\pi$ also separates even, meaning that it can be expressed as 
$$\gammanm^{-1}\pi=E_2E_4\cdots E_{2n+2m} O_1 \dots O_k$$
where cycles $O_i$ contains only odd elements for $i=1,2,\dots,k$ and for $j=2,4,\dots,2n+2m$, the cycles containing an even element are of the form $E_{j}=(j,o_{j,1},o_{j,2},\dots, o_{j,l(j)})$ with $l(j)\geq 0$, and $o_{j,i}$ odd for all $i=1,\dots,l(j)$.

On the other hand, one can notice that
$$I\gammanm=(1)(3)\dots(2m+2n-1)(2,4\dots,2n)(2n+2,\dots, 2m+2n),$$ 
fixes all odd numbers.  Therefore, 
$$I\pi=(I\gammanm)(\gammanm^{-1}\pi)=C^{\text{out}}C^{\text{inn}}O_1 \dots O_k$$
where $O_1,\dots O_k$ are the same cycles of $\gammanm^{-1}\pi$ (containing only odd elements), this proves part 4. Finally, by looking at the orbit of $2$ one can check that cycles $E_2,E_4,\dots, E_{2n}$ are glued together. Namely, 
$$C^{\text{out}}:=(2,o_{2,1},\dots, o_{2,l(2)},4,o_{4,1},\dots, o_{4,l(4)}, 6, \dots\dots,2n,o_{2n,1},\dots, o_{2n,l(2n)})$$
is a cycle of $I\pi$. And similarly, the cycles $E_{2n+2},\dots, E_{2n+2m}$ are glued together into the cycle
$$C^{\text{inn}}:=(2n+2,o_{2n+2,1},\dots, o_{2n+2,l(2n+2)},2n+4,\dots\dots, 2n+2m,\dots)$$
of $I\pi$, as desired. 
\end{proof}

We now explain how the permutation $I\pi$ is helpful to understand the graph of $\pi$. The intuition is very clear, the cycles of $I$ are precisely the edges of the graph $\GG_\pi$, while the cycles of $\pi$ are the vertices of the graph. Thus one can think of the permutation $I\pi$ as a way to travel through $\GG_\pi$. This can be formally stated as follows.
\begin{lemma}
For every $j\in[2n+2m]$, the cycle of $\pi$ containing $j$ and the cycle of $\pi$ containing $I\pi(j)$ are connected by an edge in $\GG_\pi$. 
\end{lemma}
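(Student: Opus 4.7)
The plan is very short: the key observation is that the edges of $\GG_\pi$ are precisely encoded by the cycles of $I$, while the vertices of $\GG_\pi$ are the cycles of $\pi$. So to see that the cycle through $j$ and the cycle through $I\pi(j)$ are joined by an edge, I just need to track how the composition $I\pi$ acts: it first applies $\pi$ (staying inside the same cycle of $\pi$, hence the same vertex of $\GG_\pi$), and then applies $I$ (crossing exactly one edge of $\GG_\pi$).

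Concretely, fix $j\in[2n+2m]$ and let $j':=\pi(j)$, so that $I\pi(j)=I(j')$. Since $j$ and $\pi(j)=j'$ lie in the same cycle of $\pi$, the cycle of $\pi$ containing $j$ coincides with the cycle of $\pi$ containing $j'$. By the definition of $I$, the pair $\{j',I(j')\}$ is one of the pairs $\{2k-1,2k\}$ for some $k\in[n+m]$. By Definition \ref{defi.Graph.pi}, this pair contributes an edge of $\GG_\pi$ joining the block of $\pi$ that contains $2k-1$ and the block of $\pi$ that contains $2k$, i.e.\ joining the cycle of $\pi$ containing $j'$ and the cycle of $\pi$ containing $I(j')=I\pi(j)$. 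Since the cycle containing $j'$ is the same as the cycle containing $j$, this is exactly the desired edge.

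There is no real obstacle: the whole content is just unpacking the three definitions of $\GG_\pi$, of $I$, and of $I\pi$. The only thing worth emphasising, because it is used implicitly in Proposition \ref{prop:I.pi}, is the interpretation: iterating $I\pi$ corresponds to a walk on $\GG_\pi$ that at each step jumps along a distinguished edge (the one given by the current position's $I$-partner), so the cycle structure of $I\pi$ encodes how $\GG_\pi$ looks globally, with $C^{\mathrm{out}}$ and $C^{\mathrm{inn}}$ tracing the two boundary cycles of the annulus and the $O_i$'s tracing closed walks inside.
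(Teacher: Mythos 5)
Your proof is correct and is essentially identical to the paper's: both note that $j$ and $\pi(j)$ lie in the same cycle of $\pi$, and that the cycles containing $\pi(j)$ and $I(\pi(j))$ are joined by an edge by the very definition of $\GG_\pi$. You simply spell out the definitional steps in more detail than the paper does.
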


\begin{proof}
Notice that $\pi(j)$ and $j$ are in the same cycle of $\pi$. We conclude by noticing that the cycles of $\pi$ containing $\pi(j)$ and $I(\pi(j))$ are connected by definition.
\end{proof}

\begin{remark}
Observe that the previous lemma implies that the cycles $C^{\text{out}},C^{\text{inn}}O_1, \dots, O_k$ of $I\pi$ correspond to cycles of the graph $\GG_\pi$. Moreover, since $C^{\text{out}}, C^{\text{inn}}$ together contain all the even numbers $\EE$, then the two corresponding cycles in $\GG_\pi$ contain all the edges (and thus all the vertices) in $\GG_\pi$.
\end{remark}

Another implication is that if we construct a graph $ \vec{\GG}_\pi$ with the cycles of $\pi$ as vertices and for every $j\in[2n+2m]$ we draw an directed edge from the cycle containing $j$ and the cycle containing $I\pi(j)$. Then, by pairing directed edges with the same endpoints (but opposite directions) into one undirected edge, we end up with $\GG_\pi$. 

We will explain this process to draw $\GG_\pi$ in more detail. Let us begin by separating the cycles $C\in\pi$ in 3 types:
 \begin{itemize}
        \item $C$ is an \emph{outer} block if $C\subset [2n]:=\{1,\dots,\dots, 2n\}$.
        \item $C$ is an \emph{inner} block if $C\subset [2m]:=\{2n+1,\dots,2n+2m\}$
        \item  $C$ is a \emph{intermediate} block if it contain at least one element of each $[2n]$ and $[2m]$.
    \end{itemize}
To draw the graph $\GG_\pi$ using $I\pi$ the procedure is as follows:
\begin{itemize}
\item \textbf{Step 1.} We first draw the cycle of $G_\pi$ corresponding to $C^{\text{out}}$ in clockwise direction. Namely we put the cycles (vertices) containing the elements of  $C^{\text{out}}$ (say starting in 2) and we join consecutive vertices with a directed edge. One should notice that there is a simple cycle that contains all the inner and middle blocks (with possibly some outer blocks), we will call this the \emph{exterior core cycle}. To help with the intuition, we should draw the remaining (outer) blocks in the ``exterior'' of this core cycle.
Notice that while drawing $C^{out}$, we may go back to the same vertex several times (forming simple cycles in the way).
Notice also that we may draw the same edge twice (but at most twice, each in a different direction). These edges correspond to pairs $(2i-1,2i)$ with $i\leq n$ where both elements appear in $C^{\text{out}}$. We call these \emph{flexible exterior edges}. 
\item \textbf{Step 2.} We now do a similar process with $C^{\text{inn}}$, but we will only draw in the interior of the ``exterior core cycle" from step 1. Notice that each inner and middle block appearing in the exterior core cycle will also contain elements in $C^{\text{inn}}$, so we should make these coincide. In order to do this we will draw the vertices with elements in $C^{\text{inn}}$ in the order they appear, but now in counter-clockwise direction. Same as before we join consecutive vertices with a directed edge. After this procedure we should also be able to identify an \emph{interior core cycle} that is simple and contains all the outer and middle blocks (with possibly some inner blocks). Notice that some edges from the  interior core coincide with the edges of the exterior core but have opposite direction. 
Finally we may have drawn the same edge twice (not in the core cycle), corresponding to pairs $(2i-1,2i)$ with $i\geq n+1$ where both elements appear in $C^{\text{out}}$, we call these \emph{flexible interior edges}.
  \item At this stage we already drew all the vertices and edges in $\GG_\pi$. So, one can simply convert all the (directed) edges into undirected edges to obtain $\GG_\pi$. However, in order to complete the directed graph $\vec{\GG}_\pi$, one should repeat the previous procedure with the remaining blocks of $I\pi$, namely $O_1, \dots, O_k$. By drawing the vertices and directed edges in counter-clockwise direction we will notice that each cycle $O_i$ yields a cycle of $\GG_\pi$. Moreover these cycles are formed precisely by edges that where only drawn in one direction (after Step 1 and Step 2), and we are now drawing the edges in the opposite direction. Notice also that these cycles might be in the exterior (if $O_i\subset [2n]$), in the interior (if $O_i\subset[2m]$) or embedded in between the interior core cycle and the exterior core cycle. 
\end{itemize}

Check Figure \ref{Figure:Ipi} for an example on how the graph of $$\pi=(1,3,8)(2)(4,5,7,16,9)(6)(10,11)(12,13)(14,15)$$ is constructed.

\begin{figure}[h!]
\begin{center}
\includegraphics[width=0.9\linewidth]{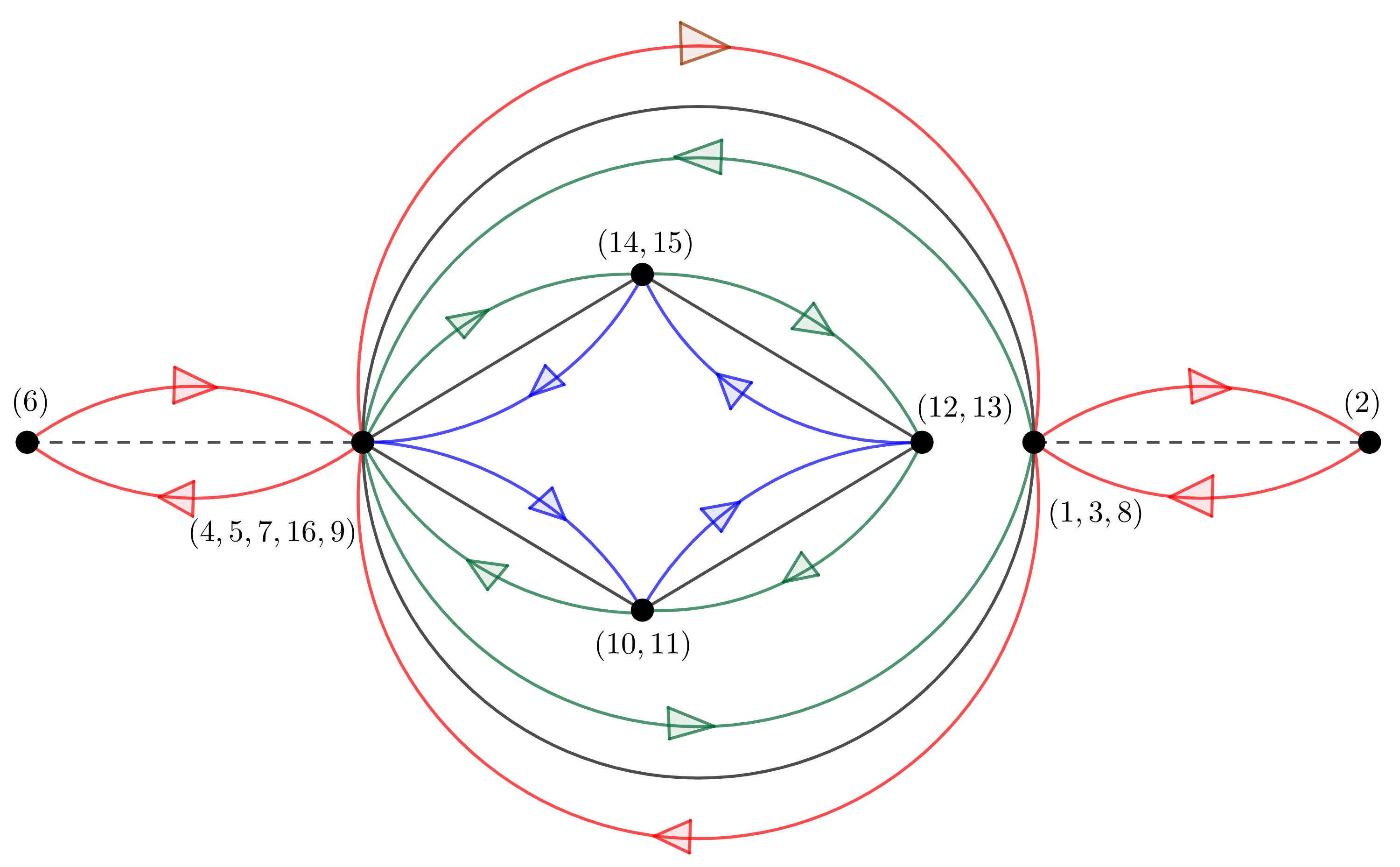}
\caption{The Graph $\GG_{\pi}$ of Figure \ref{Figure: Merging one cycle} corresponding to $\pi=(1,3,8)(2)(4,5,7,16,9)(6)(10,11)(12,13)(14,15)$. In this case $I\pi=C^{\text{out}}C^{\text{inn}}O_1=(2,1,4,6,5,8)(10,12,14,16)(3,7,15,13,11,9)$. In red, blue and green the cycles $C^{\text{out}}$ and $C^{\text{inn}}$ and $O_1$ respectively. The two dotted edges correspond to flexible (exterior) edges.}
    \label{Figure:Ipi}   
\end{center}
\end{figure}

From this construction one should be clear that the set of graphs $\GG_\pi$ that can be obtained from an arbitrary $\pi\in \JJ_{2n,2m}$ is very particular. First of all, the graph must be planar. And it contains two distinguished simple cycles (the interior and exterior core cycles) that coincide in some vertices. These core cycles divide the plane in two regions (interior and exterior) where the remaining vertices belong. Regarding the connection to $I\pi$ one can draw the graph in such a way that one can go around all the exterior vertices by following $C^{\text{out}}$ and one can go around all the interior edges by following $C^{\text{inn}}$. Finally, let us point out a fact that will play a fundamental role when studying the commutator in Section \ref{sec:commutator}.   

\begin{remark}
\label{rem:cutting.edges}
 The edges that appear twice in a cycle of $I\pi$, namely, those pairs of elements $2i-1,2i$ that are both in $C^{\text{out}}$ or both in $C^{\text{inn}}$, correspond to the flexible edges (interior or exterior) of the graph. Notice that the flexible edges are precisely the \emph{cutting edges} of $\GG_\pi$. Recall that a connected graph $G$ has a cutting edge $e$ if the graph $G\setminus\{e\}$, obtained by removing $e$ from $G$, is no longer connected. Alternatively a connected graph has a cutting edge, if there is an edge that does not belong to any simple cycle. This observation means that $\GG_\pi$ has no cutting edges if and only if $\pi$ is admissible, as in Definition \ref{def:admissible}. 
\end{remark}


\section{Commutator}
\label{sec:commutator}

The commutator $ab-ba$ of a pair of free random variables can be seen as a signed counterpart of the anti-commutator. When studying the cumulants, one can parallel the approach used to study the anti-commutator to express it as sum of products of cumulants of $a$ and $b$. The difference here, is that the terms in the sum may have positive or negative signs, which may lead to cancellation of terms in the sum.

\subsection{Adapting the formulas for the anti-commutator}
The first step is just a modification of Proposition \ref{prop.basic.anticomm.2}, and its proof is analogous, so we omit it. Recall from Notation \ref{nota.epsilons} that given a $n$-tuple $\varepsilon \in \{1,*\}^n$, we denote by $(ab)^\varepsilon:=((ab)^{\varepsilon(1)},(ab)^{\varepsilon(2)},\dots,(ab)^{\varepsilon(n)})$ the $n$-tuple with entries $ab$ or $ba$ dictated by the entries of $\varepsilon$, and $(a,b)^{\varepsilon}$ is the $2n$-tuple obtained from separating the $a$'s from the $b$'s in $(ab)^\varepsilon$.  To study the commutator we also need to keep track of the sign associated to each $n$-tuple, so we multiply by $-1$ for each $*$ in $\varepsilon$:
$$s(\varepsilon):=(-1)^{|\{ 1\leq i\leq n:\, \varepsilon(i)=*\}|}.$$

With this notation in hand, Proposition \ref{prop.basic.anticomm.2} can be readily adapted to the commutator.

\begin{proposition}
\label{prop.basic.comm.2}
The second order free cumulants of the commutator $ab-ba$ of two second order free random variables $a,b$ satisfy the following formula for all $n,m\in \nn$:
\begin{equation}
\freec{n,m}{ab-ba} =\sum_{\substack{(\cU,\pi)\in \PS_{NC}(2n,2m),\\ \pi^{-1}\gammanm \text{ sep. even}}} \sum_{\substack{\varepsilon\in \{1,*\}^{n+m}\\ \{A(\varepsilon),B(\varepsilon)\}\geq \cU}} s(\varepsilon) \kappa_{(\cU,\pi)}((a,b)^{\varepsilon}). 
\end{equation}
Here, we use the notation $\gammanm:=(1,\dots, 2n)(2n+1,\dots, 2n+2m)\in S_{2n+2m}$.
\end{proposition}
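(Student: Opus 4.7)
The proof should closely mirror that of Proposition \ref{prop.basic.anticomm.2}, with only a bookkeeping change to track signs. My plan is as follows.

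First, I would expand $\freec{n,m}{ab-ba}$ using multilinearity of the second order free cumulants in each of the $n+m$ entries. Since $ab-ba = (ab)^1 - (ab)^*$, each entry contributes either a factor $(ab)^1$ with sign $+1$ or $(ab)^*$ with sign $-1$. Collecting terms over all choices gives
\begin{equation*}
\freec{n,m}{ab-ba, \dots, ab-ba} \;=\; \sum_{\varepsilon\in\{1,*\}^{n+m}} s(\varepsilon)\, \kappa_{n,m}\bigl((ab)^{\varepsilon}\bigr),
\end{equation*}
where $s(\varepsilon)=(-1)^{|\{i:\varepsilon(i)=*\}|}$ records the total number of minus signs chosen.

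Next, for each fixed $\varepsilon$, I would apply the products-as-arguments formula for second order cumulants (Equation \eqref{eq.products.arguments.2}) to rewrite
\begin{equation*}
\kappa_{n,m}\bigl((ab)^{\varepsilon}\bigr) \;=\; \sum_{\substack{(\cU,\pi)\in \PS_{NC}(2n,2m)\\ \pi^{-1}\gammanm\text{ sep. even}}} \kappa_{(\cU,\pi)}\bigl((a,b)^{\varepsilon}\bigr),
\end{equation*}
with $\gammanm=(1,\dots,2n)(2n+1,\dots,2n+2m)$. Substituting this into the previous display and interchanging the two (finite) sums produces
\begin{equation*}
\freec{n,m}{ab-ba} \;=\; \sum_{\substack{(\cU,\pi)\in \PS_{NC}(2n,2m)\\ \pi^{-1}\gammanm\text{ sep. even}}} \sum_{\varepsilon\in\{1,*\}^{n+m}} s(\varepsilon)\, \kappa_{(\cU,\pi)}\bigl((a,b)^{\varepsilon}\bigr).
\end{equation*}

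Finally, I would invoke the vanishing of mixed second order free cumulants: since $a$ and $b$ are second order free, $\kappa_{(\cU,\pi)}((a,b)^\varepsilon)$ vanishes unless every block of $\cU$ consists entirely of $a$'s or entirely of $b$'s, i.e.\ unless $\{A(\varepsilon),B(\varepsilon)\}\geq \cU$. Restricting the inner sum to such $\varepsilon$ yields exactly the claimed formula. No step here presents a genuine obstacle; the entire argument is a sign-decorated copy of the proof of Proposition \ref{prop.basic.anticomm.2}, and the only substantive point to verify is that the multiplicative sign $s(\varepsilon)$ from the initial multilinear expansion is the same sign that appears unchanged once the cumulant is rewritten via the products-as-arguments formula, which is immediate because the sign does not depend on $\pi$ or on the splitting of entries into $a$'s and $b$'s.
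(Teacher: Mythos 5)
Your proposal is correct and is exactly the argument the paper has in mind: the paper omits the proof of this proposition, stating only that it is an analogous, sign-decorated version of the proof of Proposition \ref{prop.basic.anticomm.2}, which is precisely what you have written out (multilinear expansion with the sign $s(\varepsilon)$, products-as-arguments, interchange of sums, and vanishing of mixed cumulants forcing $\{A(\varepsilon),B(\varepsilon)\}\geq\cU$). No issues.
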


Same as before, one can check that only permutations $\pi\in S_{\NC}(2n,2m)$ that actually contribute to the sum are those such that $\GG_\pi$ is connected and bipartite. Namely, only partitions $\pi\in \JJ_{2n,2m}$ contribute to the sum. Moreover, for each $\pi$ that contributes there are exactly two $(m+n)$-tuples that contribute, $\varepsilon_\pi$ and $\varepsilon'_\pi$. 
Notice that since these two tuples do not coincide in any entry, then $s(\varepsilon'_\pi)=(-1)^{m+n} s(\varepsilon_\pi)$. Given a permutation $\pi \in \JJ_{2n,2m}$ we use the notation $s(\pi)=s(\varepsilon_\pi)$. Recall that one of the parts in the bipartition is $\pi':=\pi|A(\varepsilon_\pi)$ where
\[
A(\varepsilon_\pi):=\{ 2i-1 |1\leq i\leq n,\ \varepsilon_\pi(i)=1\}\cup \{ 2i |1\leq i\leq n,\ \varepsilon_\pi(i)=*\}.
\]
Thus we have the alternative description
\begin{equation}
\label{not:sign.pi}
s(\pi):=(-1)^{|\{ k\in A(\varepsilon_\pi):\, k \text{ is even}\}|}
\end{equation}

Then Theorem \ref{Thm.anticommutator.main.2} can be adapted to the commutator case as follows.

\begin{theorem}
\label{Thm.commutator.main}
Consider two second order free random variables $a$ and $b$, and let $(\freec{n}{a})_{n\geq 1}$, $(\freec{n,m}{a})_{n,m\geq 1}$, $(\freec{n}{b})_{n\geq 1}$, $(\freec{n,m}{b})_{n,m\geq 1}$ and $(\freec{n,m}{ab-ba})_{n,m\geq 1}$ be the sequence of first and second order free cumulants of $a$, $b$ and $ab-ba$, respectively. Then, for every $n,m\geq 1$ one has
\begin{align*}
\freec{n,m}{ab-ba} =& \sum_{\substack{\pi \in \JJ_{2n,2m}\\ \pi=\pi' \sqcup \pi''}}s(\pi) \left(\freec{\pi'}{a} \, \freec{\pi''}{b}  + (-1)^{m+n} \freec{\pi'}{b} \, \freec{\pi''}{a}  \right) \\ & + \sum_{\substack{(\cU,\pi)\in \NCac_{2n,2m}\\ \pi =\pi'\sqcup \pi'' \sqcup A\sqcup B }} s(\pi) \left(\freec{|A|,|B|}{a}\, \freec{\pi'}{a} \, \freec{\pi''}{b} + (-1)^{m+n} \freec{|A|,|B|}{b}\, \freec{\pi'}{b} \, \freec{\pi''}{a}  \right).
\end{align*}
\end{theorem}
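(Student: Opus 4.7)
The plan is to mimic the proof of Theorem~\ref{Thm.anticommutator.main.2.2} step by step, inserting the sign $s(\varepsilon)$ that arises when one expands $\kappa_{n,m}(ab-ba,\ldots,ab-ba)$ by multilinearity. The starting point is Proposition~\ref{prop.basic.comm.2}, which has already incorporated the products-as-arguments formula \eqref{eq.products.arguments.2} and the vanishing of mixed second order cumulants, yielding
\[
\freec{n,m}{ab-ba}=\sum_{\substack{(\cU,\pi)\in \PS_{NC}(2n,2m)\\ \pi^{-1}\gamma \text{ sep.\ even}}}\sum_{\substack{\varepsilon\in\{1,*\}^{n+m}\\ \{A(\varepsilon),B(\varepsilon)\}\geq\cU}} s(\varepsilon)\, \kappa_{(\cU,\pi)}((a,b)^{\varepsilon}).
\]
I would then split this double sum into the Type~1 piece, indexed by $(\cU,\pi)\in S_{\NC}(2n,2m)$, and the Type~2 piece, indexed by $(\cU,\pi)\in S_{\NC}'(2n,2m)$, and handle the two pieces exactly as in the anti-commutator proof.

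For Type~1, Proposition~\ref{prop.few.nonvanishing.epsilons.2} states that only $\pi\in\JJ_{2n,2m}$ contribute, and for each such $\pi$ there are exactly two admissible tuples $\varepsilon_\pi,\varepsilon_\pi'$ in the sense of Notation~\ref{Notation: Deiniftion of two epsilons}, which disagree in every one of the $n+m$ coordinates. Consequently $s(\varepsilon_\pi')=(-1)^{n+m}s(\varepsilon_\pi)=(-1)^{n+m}s(\pi)$, where $s(\pi):=s(\varepsilon_\pi)$ is the sign defined in \eqref{not:sign.pi}. Since $A(\varepsilon_\pi')=B(\varepsilon_\pi)$ and $B(\varepsilon_\pi')=A(\varepsilon_\pi)$, combining the contributions of the two tuples produces
\[
s(\pi)\bigl(\freec{\pi'}{a}\,\freec{\pi''}{b}+(-1)^{n+m}\freec{\pi'}{b}\,\freec{\pi''}{a}\bigr),
\]
which is precisely the first sum of Theorem~\ref{Thm.commutator.main}.

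For Type~2, the same analysis applies factorwise to $\pi=\pi_1\times\pi_2\in\NC(2n)\times\NC(2m)$, exactly as in the proof of Theorem~\ref{Thm.anticommutator.main.2.2}: only pairs for which both $\GG_{\pi_1}$ and $\GG_{\pi_2}$ are connected and bipartite contribute (equivalently, $(\cU,\pi)\in\NCac_{2n,2m}$), and for each such pair the constraint $\{A(\varepsilon),B(\varepsilon)\}\geq\cU$ again forces $\varepsilon$ to be one of two completely opposite tuples determined by the unique bipartition of $\GG_\cU$. Defining $s(\pi)$ in this case through the bipartition of $\cU$ (equivalently, through the tuple $\varepsilon_\cU$ with $1\in A(\varepsilon_\cU)$), the same parity identity yields the second sum of Theorem~\ref{Thm.commutator.main}.

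The only delicate point, rather than a true obstacle, is verifying that $s(\pi)=s(\varepsilon_\pi)$ is well-defined and behaves uniformly across the Type~1 and Type~2 sums; once this is in hand, the proof is simply the proof of Theorem~\ref{Thm.anticommutator.main.2.2} with the factor $s(\varepsilon)$ carried through verbatim, so no new combinatorial identities are required.
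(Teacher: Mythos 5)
Your proposal is correct and follows essentially the same route as the paper: the authors also start from Proposition \ref{prop.basic.comm.2}, invoke Proposition \ref{prop.few.nonvanishing.epsilons.2} to reduce the Type 1 sum to $\JJ_{2n,2m}$ and the Type 2 sum to $\NCac_{2n,2m}$, and use the fact that the two contributing tuples $\varepsilon_\pi,\varepsilon_\pi'$ disagree in every coordinate to get $s(\varepsilon_\pi')=(-1)^{n+m}s(\varepsilon_\pi)$, carrying the sign through the anti-commutator argument verbatim. Your treatment is, if anything, slightly more explicit than the paper's (which simply states that Theorem \ref{Thm.anticommutator.main.2} "can be adapted"), including the remark that $s(\pi)$ must be checked to be well-defined in the Type 2 case via the unique bipartition of $\GG_\cU$.
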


Contrary to the anti-commutator case where all the terms where positive, now each partition $\pi$ has a sign $s(\pi)$ that can potentially cancel with other term. We can actually cancel several terms to simplify the sum and obtain Theorem \ref{Thm.commutator.main.2.intro} advertised in the introduction. To achieve this, we need to cancel several terms in both sums. We will firs explain how to simplify the second sum by reducing to the first order case and using the results of Nica and Speicher \cite{nica1998commutators}. For the first sum we can still cancel terms following the same philosophy used in \cite{nica1998commutators}, but finding such canceling pairs is not a simple task and becomes technical. Since the underlying idea for the cancellation coming from the graph $\GG_\pi$ is easier to grasp, we will first give an intuitive explanation of the cancellation phenomenon, followed by rigorous proof that relies on the permutation $I\pi$ studied in Section \ref{ssec:I.pi}.

\subsection{Intuition behind the cancellation in first sum.}  

Simplifying the second sum in Theorem \ref{Thm.commutator.main} follows easily from reducing to the first order case and using the results of Nica and Speicher \cite{nica1998commutators}. Simplifying the first sum requires more effort, but we can still cancel terms following the same philosophy used in \cite{nica1998commutators}. More specifically, we will find pairs of partitions $\pi_1, \pi_2 \in \JJ_{2n,2m}$ such that
\begin{enumerate}[label=(C\arabic*)]
    \item $\freec{\pi'_1}{a} \, \freec{\pi''_1}{b} =\freec{\pi'_2}{a} \, \freec{\pi''_2}{b}$, and \label{condition.1}
    \item $s(\pi_1)=-s(\pi_2)$. \label{condition.2}
\end{enumerate}
Notice that these two conditions together imply that the terms corresponding to $\pi_1$ and $\pi_2$ cancel each other:
\begin{equation}
\label{eq:first.sum.cancel}
s(\pi_1) \left(\freec{\pi'_1}{a} \, \freec{\pi''_1}{b}  + (-1)^n \freec{\pi'_1}{b} \, \freec{\pi''_1}{a}  \right)+s(\pi_2) \left(\freec{\pi'_2}{a} \, \freec{\pi''_2}{b}  + (-1)^n \freec{\pi'_2}{b} \, \freec{\pi''_2}{a}  \right)=0.
\end{equation}

In what follows, we will identify a subset of $\JJ_{2n,2m}$ that contains pairs of permutations that satisfy both \ref{condition.1} and \ref{condition.2} and thus cancel each other. After canceling them we end up with a sum indexed by a smaller set of \emph{admissible} permutations. Despite that our formula is simpler, it is not cancellation free, so it might be possible to simplify it further. 

In order to identify partitions satisfying \ref{condition.1}, the key observation is that the product $\freec{\pi'}{a} \, \freec{\pi''}{b}$ only depends on the graph $\GG_\pi$, this is because the bipartition of $\pi=(\pi',\pi'')$ is determined by the graph $\GG_\pi$. Moreover, the sizes of the blocks of $\pi$ (that ultimately govern the product) are precisely the sizes of degrees of the vertices of the graph $\GG_\pi$. This means that the permutations having the same associated graph will have the same product. Namely:
$$\pi_1, \pi_2\in \JJ_{2n,2m} \text{ and } \GG_{\pi_1}=\GG_{\pi_2} \qquad \Rightarrow \qquad   \ref{condition.1}.$$
The second key observation is that to obtain a pair of partitions that cancel each other we need to focus on the graphs $\GG_\pi$ with a \emph{cutting edge}, or equivalently in permutations $\pi$ such that there is pair $(2i-1,2i)$ where both elements are in the same cycle of $I\pi$ (either in 
$C^{\text{out}}$ or  $C^{\text{inn}}$), see Remark \ref{rem:cutting.edges}.

Our approach is to find a partition $\pi_1$ such that $\GG_{\pi_1}$ has a cutting edge, and construct another partition $\pi_2$ such that $\GG_{\pi_2}=\GG_{\pi_1}$ and $s(\pi_2)=-s(\pi_1)$, thus providing a pair of partitions satisfying \ref{condition.1} and \ref{condition.2}.

Constructing the bijection $(\pi_1,\pi_2)$ mentioned above is the main technical part. Let us give a final piece of intuition on how this is done. Given a partition $\pi_1$ such that $\GG_{\pi_1}$ has a cutting edge, we can separate the vertices of the graphs in two, by grouping the vertices in each connected component obtained after removing the edge. Notice that one component has the all the vertices in core cycles, we call this the \emph{fixed} component and the other the \emph{moving} component. For instance, if the flexible edge is exterior, then the vertices in the moving component will have only consecutive elements in $[2n]$ (assuming 1 is next to $2n$). To construct a partition $\pi_2$ we fix all the cycles of $\pi_1$ that belong to the fixed component, and then modify the cycles in $\pi_1$ in the moving component (shifting by 2 most of the elements) in such a way that we get the same graph, the new partition $\pi_2$ is still not crossing, and such that $s(\pi_1)=-s(\pi_2)$.

\subsection{Rigorous explanation of cancellation in the first sum.} Recall from Definition \ref{def:admissible} that a permutation $\pi\in\JJ_{2n,2m}$ is said to be admissible if $I\pi$ separates $2i-1$ from $2i$ for all $1\leq i \leq n+m$. The permutations that are not admissible will be the ones that we can cancel.

\begin{definition}
Given a permutation $\pi\in\JJ_{2n,2m}$ for $1\leq i \leq n+m$ if the pair $(2i-1,2i)$ belong to $C^{\text{out}}$ or both belong to $C^{\text{inn}}$ then we say that the pair is flexible. If $\pi$ has at least one flexible pair, then we say that $\pi$ is cancelable, and denote the set of all cancelable permutations by $\FF_{2n,2m}$.
\end{definition}
Notice that $\JJ_{2n,2m}=\FF_{2n,2m}\sqcup \AA_{2n,2m}$ is the disjoint union of cancelable and admissible permutations. To construct our bijection, for a $\pi\in\FF_{2n,2m}$ we want to have a distinguished flexible pair. Thus we will use the following notation.
\begin{notation}
\label{not:smallest.pair}
Given $\pi\in\FF_{2n,2m}$ we order the elements of $C^{\text{out}}\cup C^{\text{inn}}$ by listing in order all the elements of $C^{\text{out}}$ (starting with 2) followed by the elements of $C^{\text{inn}}$ (starting with $2n+2$). Notice that list looks as follows:
$$L_\pi= 2,\dots, 4,\dots,6, \dots,\dots, 2n+2m,\dots.$$
where the empty spaces may contain some odd elements. We denote by $f_\pi$ the first element in the list that belongs to a flexible pair. Letting $i$ be such that $f_\pi\in \{2i-1,2i\}$ then we say that $(2i-1,2i)$ is \emph{smallest flexible pair}. If $f_\pi$ is even we say the $\pi$ is an even cancelable permutation, and denote the set of such permutations by $\FF^E_{2n,2m}$. Otherwise, if $f_\pi$ is odd, we say that $\pi$ is an odd cancelable permutations, and denote the set by $\FF^O_{2n,2m}$.
\end{notation}
Clearly, every cancelable partition is either even or odd, thus $\FF_{2n,2m}=\FF^E_{2n,2m}\sqcup \FF^O_{2n,2m}$. We will construct a bijection from $\FF^E_{2n,2m}$ to $\FF^O_{2n,2m}$.

\begin{definition}
\label{def:bijection}
Given $\pi\in\FF^E_{2n,2m}$ with $f_\pi=2i$, let $j\geq i$ be such that $2j$ is the largest even number appearing before $2i-1$ in the list $L_\pi$. Namely, the list looks as follows:
$$L_\pi= \dots, 2i,\dots,2j,\dots, 2i-1,\dots, 2j+2, \dots.$$
We construct the auxiliary permutation $\tau\in S_{2n+2m}$ by letting
\begin{align*}
\tau(2i-1)&=2j, \\ 
\tau(2i)&=2j-1,\\
\tau(t)&=t-2, \qquad \text{whenever }t\in \{2i+1,\dots,2j\} \\
\tau(t)&=t, \qquad \qquad  \text{whenever }t\in \{1,2,\dots, 2n+2m\} \setminus \{2i-1,\dots,2j\}.
\end{align*}

Then, we define the permutation
$$T_\pi:= \tau \pi \tau^{-1}.$$
\end{definition}

\begin{example}
Let us give an example of the map $T_\pi$. Let 
$$\pi=(1,3,8)(2)(4,5,7,16,9)(6)(10,11)(12,13)(14,15)\in S_{\NC}(8,8),$$
whose graph $\GG_\pi$ can be seen in Figure \ref{Figure:Ipi}. In this case $f_\pi=2=2i$ and $j=i$ so that $\tau=(1,2)\in S_{2n+2m}$. Thus,
$$T_\pi=\tau\pi\tau^{-1}=(2,3,8)(1)(4,5,7,16,9)(6)(10,11)(12,13)(14,15)\in S_{\NC}(8,8).$$
\end{example}

\begin{proposition}
\label{prop:properties.T}
For every $\pi\in\FF^E_{2n,2m}$ it holds that $T_\pi\in \FF^O_{2n,2m}$. Moreover, $\pi$ and $T_\pi$ satisfy \ref{condition.1} and \ref{condition.2}.
\end{proposition}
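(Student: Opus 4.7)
The plan is to verify four properties of the permutation $T_\pi = \tau \pi \tau^{-1}$: that it is a non-crossing annular permutation in $\JJ_{2n,2m}$, that it lies in $\FF^O_{2n,2m}$, and that the pair $(\pi, T_\pi)$ satisfies both conditions \ref{condition.1} and \ref{condition.2}. The main combinatorial observation driving the proof is that $\tau$ preserves the partition $I$ as a set-partition while cyclically permuting its individual pairs: $\tau$ sends $\{2i-1, 2i\}$ to $\{2j-1, 2j\}$ (reversing the internal order), sends $\{2k-1, 2k\}$ to $\{2k-3, 2k-2\}$ for $i < k \leq j$, and fixes all other pairs. Consequently, $\tau$ preserves the parity of every element except $2i-1$ and $2i$, whose parities get flipped.

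Using this pair-preserving property of $\tau$, I would first establish that $\GG_{T_\pi}$ is isomorphic to $\GG_\pi$ via the bijection $\Phi : C \mapsto \tau(C)$ on cycles: edges of $\GG_{T_\pi}$ correspond bijectively to edges of $\GG_\pi$ after applying $\Phi$ to both endpoints, so $\GG_{T_\pi}$ is connected and bipartite and $\Phi$ restricts to a bijection of bipartite parts. Together with a short computation showing that $T_\pi^{-1}\gamma$ separates even numbers (inherited from the same property of $\pi^{-1}\gamma$ via the pair-to-pair correspondence), this yields $T_\pi \in \JJ_{2n,2m}$. The non-crossing property of $T_\pi$ itself requires a geometric argument: as observed in Remark \ref{rem:cutting.edges}, the flexible pair $(2i-1, 2i)$ is a cutting edge of $\GG_\pi$, so its removal disconnects $\GG_\pi$ into a fixed component containing all core vertices and a moving component whose underlying elements lie in the contiguous arc $\{2i-1, \dots, 2j\}$. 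The definition of $\tau$ realizes a rotation of this arc that is compatible with the annular cyclic order, so $T_\pi$ inherits the non-crossing structure of $\pi$.

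For condition \ref{condition.1}, since $\Phi$ preserves block sizes, it suffices to verify that the bipartition of $\GG_{T_\pi}$ matches that of $\GG_\pi$ under $\Phi$. A short case analysis on whether $i \geq 2$ (so $\tau$ fixes the element $1$) or $i = 1$ (where one uses the path structure within the arc to show that the cycles $V_1$ and $V_{\tau^{-1}(1)}$ lie in the same bipartite part of $\GG_\pi$) yields $\freec{T_\pi'}{a}\freec{T_\pi''}{b} = \freec{\pi'}{a}\freec{\pi''}{b}$. For condition \ref{condition.2}, I would compare
\[
s(T_\pi) = (-1)^{|\{k \in A(\varepsilon_\pi) \,:\, \tau(k) \text{ even}\}|} \quad \text{with} \quad s(\pi) = (-1)^{|\{k \in A(\varepsilon_\pi) \,:\, k \text{ even}\}|}.
\]
Since $\tau$ flips parity precisely on $\{2i-1, 2i\}$, and since the edge $V_{2i-1}$--$V_{2i}$ in the bipartite graph forces exactly one of these two elements to belong to $A(\varepsilon_\pi)$ (the endpoints of an edge lie in opposite parts), the parity count flips by exactly one, giving $s(T_\pi) = -s(\pi)$.

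Finally, to conclude $T_\pi \in \FF^O_{2n,2m}$ I would track the smallest flexible pair. Flexibility is determined by whether both elements of a pair lie in $C^{\text{out}}$ or both in $C^{\text{inn}}$, and $\Phi$ preserves this outer/inner classification of cycles since $\tau$ acts locally on an arc contained in $[2n]$ (or entirely in $[2m]$). Thus the flexible pairs of $\pi$ correspond, via the cyclic shift of pairs induced by $\tau$, to the flexible pairs of $T_\pi$; in particular, the smallest flexible pair $\{2i-1, 2i\}$ of $\pi$ is mapped to the pair $\{2j-1, 2j\}$ of $T_\pi$. The main obstacle will be a careful combinatorial analysis of the list $L_{T_\pi}$ to show that $2j-1$ precedes $2j$ in the listing of the relevant core cycle and that no pair $\{2k-1, 2k\}$ with $2k < 2j - 1$ becomes flexible in $T_\pi$; these together imply that the first flexible element in $L_{T_\pi}$ is the odd number $2j-1$, placing $T_\pi$ in $\FF^O_{2n,2m}$ as required.
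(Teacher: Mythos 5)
Your overall strategy coincides with the paper's: conjugate by $\tau$, exploit that $\tau$ permutes the pairs of $I$ (equivalently $\tau I=I\tau$, so $IT_\pi=\tau(I\pi)\tau^{-1}$), transport the graph via $\Phi(C)=\tau(C)$ to get \ref{condition.1}, and observe that $\tau$ flips parity exactly on $\{2i-1,2i\}$ to get \ref{condition.2}. However, there is one step where the justification you give would not go through as stated: you claim that ``$T_\pi^{-1}\gamma$ separates even numbers'' is \emph{inherited from the same property of $\pi^{-1}\gamma$ via the pair-to-pair correspondence}. The pair-to-pair correspondence only controls $I$; it does not control $\gamma$, and $\tau\gamma\tau^{-1}\neq\gamma$, so $T_\pi^{-1}\gamma=\tau\pi^{-1}\tau^{-1}\gamma$ is \emph{not} a conjugate of $\pi^{-1}\gamma$ and the separation property is not automatic. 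The paper's proof closes this by writing $\gamma^{-1}T_\pi=(\gamma^{-1}I)(IT_\pi)$ and explicitly computing the cycles of $IT_\pi=\tau(I\pi)\tau^{-1}$ from the normal form $I\pi=C^{\text{out}}C^{\text{inn}}O_1\cdots O_k$ of Proposition \ref{prop:I.pi}: one must check that $\tau(C^{\text{out}})$ still lists $2,4,\dots,2n$ in increasing order (this is precisely where the specific choice of $j$ and the shift-by-two in the definition of $\tau$ enter), that $\tau(C^{\text{inn}})=C^{\text{inn}}$, and that each $\tau(O_r)$ still consists of odd numbers. Only after this does ``separates even'' follow, and the same computation also yields $\#(Kr_{2n,2m}(T_\pi))=\#(Kr_{2n,2m}(\pi))$, which the paper uses to prove that $T_\pi$ is a non-crossing annular permutation by pure cycle counting. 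Your alternative ``geometric rotation of the moving component'' argument for non-crossingness is plausible in spirit but is asserted rather than proved, and making it rigorous would require essentially the same bookkeeping.

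Two smaller remarks. Your case analysis on whether $\tau$ fixes the element $1$ (for matching the labelled bipartitions $\pi'$ vs.\ $T_\pi'$) is a legitimate subtlety that the paper in fact glosses over; note though that the quantity $s(\pi)\bigl(\freec{\pi'}{a}\freec{\pi''}{b}+(-1)^{m+n}\freec{\pi'}{b}\freec{\pi''}{a}\bigr)$ is invariant under swapping $\pi'\leftrightarrow\pi''$ (the sign picks up $(-1)^{n+m}$ and so does the bracket), so the labelling issue is harmless for the cancellation. Finally, for membership in $\FF^O_{2n,2m}$ the paper does not need to rule out new flexible pairs below $2j-1$ in the abstract: the explicit form of $\tau(C^{\text{out}})$ shows directly that the elements of $L_\pi$ preceding $f_\pi=2i$ are untouched and that the smallest flexible pair of $T_\pi$ becomes $(2j-1,2j)$ with $2j-1$ occurring first, so your planned ``careful analysis of $L_{T_\pi}$'' is exactly the computation already required for the separates-even step.
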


\begin{proof}
There are several little facts that we need to check. Since our approach relies on understanding $IT_\pi$, the order in which we proof these claims might no be the most intuitive. Let us make a list of the facts in the order they are proved.

\begin{enumerate}[label=(F\arabic*)]
\item $T_\pi$ is an odd cancelable permutation. \label{claim1}
\item $Kr_{2n,2m}(T_\pi)$ separates even. \label{claim3}
\item $\#(T_\pi)+\#(Kr_{2n,2m}(T_\pi))=2m+2n.$ \label{claim4}
\item Exists $x\in[2n]$ such that $T_\pi(x)\in[2m]$. \label{claim5}
\item $\GG_{\pi}$ is isomorphic to $\GG_{T_\pi}$. \label{claim6} 
\item $s(T_\pi)=-s(\pi)$. \label{claim7}
\end{enumerate}

First we explain why the proposition follows from these facts. Notice that \ref{claim5} implies that $T_\pi \lor \gamma=1_{2m+2n}$, so using \ref{claim4} we conclude that $T_\pi$ is a non-crossing annular permutation. On the other hand, since $\pi\in \JJ_{2n,2m}$, then $\GG_{\pi}$ is bipartite. Thus \ref{claim6} guarantees that $\GG_{T_\pi}$ is bipartite too. The previous claims together with \ref{claim3} yield that $T_\pi\in \JJ_{2n,2m}$. By \ref{claim1} we conclude that $T_\pi\in\FF^O_{2n,2m}$. Finally, \ref{claim6} and \ref{claim7} imply that $\pi$ and $T_\pi$ satisfy \ref{condition.1} and \ref{condition.2}.

Now, we proceed to prove these 7 facts. We begin by understanding the permutation $IT_\pi$. First notice that $\tau I=I\tau$. Indeed, this follows form a direct computation
    \begin{align*}
        \tau I(2i)&=2j=I\tau(2i),  \\
        \tau I(2i-1)&=2j-1=I\tau(2i-1),\\
        \tau I(2c)&=2c-3=I\tau(2c), && \text{for }2c\in \{2i+1,\dots,2j\},\\
        \tau I(2c-1)&=2c-2=I\tau(2c-1), && \text{for }2c-1\in \{2i+1,\dots,2j\},\\
        \tau I(c)&=I(c)=I\tau(c), && \text{for }c\in \{1,2,\dots, 2n+2m\} \setminus \{2i-1,\dots,2j\}.
        \end{align*}
As a consequence, we obtain that 
\begin{equation}
\label{eq:conjugate}
IT_\pi=I \tau \pi \tau^{-1}=\tau I\pi \tau^{-1}
\end{equation} is the conjugate of $I\pi$ by $\tau$.
    
Recall from Proposition \ref{prop:I.pi} that 
\begin{equation}
\label{eq:IP.form}
I\pi=C^{\text{out}}C^{\text{inn}}O_1 \dots O_k,
\end{equation}
where the $O_r$ contain only odd elements and coincide with the blocks of $\sigma:=(Kr_{2n,2m}(\pi))^{-1}=\gamma^{-1}\pi$, while the $C^{\text{out}}$ and $C^{\text{inn}}$ are union of blocks of $\sigma$ that contain even numbers.
In particular, since $2i-1$ sits between $2j$ and $2j+2$ in the list $L_\pi$, then we know that $2i-1$ and $2j$ are in the same cycle of $\sigma$. This means that every other cycle $C$ of $\sigma$ satisfies that either $C\subset \{2i,2i+1,\dots,2j-1\}$ or $C\subset \{1,2,\dots, 2n+2m\} \setminus \{2i-1,\dots,2j\}$.  Since $\tau$ is the identity in $\{1,2,\dots, 2n+2m\} \setminus \{2i-1,\dots,2j\}$, in the latter case we obtain that $\tau(C)=C$. Furthermore, from the topological interpretation of $Kr_{2n,2m}(\pi)$, see Figure \ref{Fig:Kreweras_second_order},we also know that $\pi(\{2i,\dots,2j\}) =\{2i,\dots,2j\} \subset [2n]$.

From equations \eqref{eq:conjugate} and \eqref{eq:IP.form} we obtain that 
$$IT_\pi=D^{\text{out}}D^{\text{inn}}O'_1 \dots O'_k,$$
where $D^{\text{out}}:=\tau (C^{\text{out}} )$, $D^{\text{inn}}:=\tau (C^{\text{inn}})$ and $O'_r:=\tau (O_r)$ for $r=1,\dots,k$. Now we analyze each of these cycles using our knowledge of $I\pi$. Let us assume without loss of generality that $f_\pi\in C^{\text{out}}$ (the case $f_\pi\in C^{\text{inn}}$ is analogous).

First, since $C^{\text{inn}}$ is the union of cycles of $\sigma$ containing even numbers of the form $2k>2n$, then we must have $C^{\text{inn}}\subset \{1,2,\dots, 2n+2m\} \setminus \{2i-1,\dots,2j\}$ and we obtain
$D^{\text{inn}}=C^{\text{inn}}$.

On the other hand, for the cycles $O_r$ with only odd elements we have two cases: 
\begin{itemize}
\item Either $O_r\subset [2n+2m]\setminus \{2i-1,\dots,2j\}$, and then $O'_r =O_r$,
\item or $O_r\subset \{2i+1,\dots,2j\}$, and then $ O'_r =O_r-2$, still has the same size and only odd elements.
\end{itemize}

Finally, the cycle $C^{\text{out}}$ of $I\pi$ is of the form
$$C^{\text{out}}:=(\mathbf{a}_1,2i-2, \mathbf{o}_1,2i, \mathbf{o}_{2},2i+2,\mathbf{a}_2,2j,\mathbf{o}_3,2i-1,\mathbf{o}_4,2j+2,\mathbf{a}_3),$$
where $\mathbf{o}_1,\mathbf{o}_2,\mathbf{o}_3,\mathbf{o}_4$ are strings of odd numbers, while $\mathbf{a}_1,\mathbf{a}_2,\mathbf{a}_3$ are strings of numbers. Therefore
$$D^{\text{out}}=(\mathbf{a}_1,2i-2, \mathbf{o}_1,2j-1, \mathbf{o}_{2}-2,2i,\mathbf{a}_2-2,2j-2,\mathbf{o}_3-2,2j,\mathbf{o}_4,2j+2,\mathbf{a}_3),$$
where for a string of numbers $\mathbf{b}:=b_1,b_2,\dots,b_l$, we denote $\mathbf{b}-2:=b_1-2,b_2-2,\dots,b_l-2$. Notice that the structure of the even numbers is preserved here, in the sense that $2,4,\dots, 2n$ still appear in order in $D^{\text{out}}$. Moreover since since the pairs $(2r-1,2r)$ are preserved by $\tau$, the smallest flexible pair of $T_\pi$ is now $(2j-1,2j)$ with $f_{T_\pi}=2j-1$. Thus $T_\pi$ is an odd cancelable permutation, proving \ref{claim1}. 

Overall, we conclude that 
$$IT_\pi=D^{\text{out}}D^{\text{inn}}O'_1 \dots O'_k,$$
where the $O_1,\dots, O_k$ contain only odd elements while $D^{\text{out}}$ contains $2,4,\dots, 2n$ in that order and $D^{\text{inn}}$ contains $2n+2,2n+4,\dots, 2n+2m$ in that order. This formula has several implications.

First, by reverse engineering Proposition \ref{prop:I.pi}, we get that
$$(Kr_{2n,2m}(T_\pi))^{-1}=\gamma^{-1}T_\pi=(\gamma^{-1}I)(IT_\pi)$$
separates even, and thus $Kr_{2n,2m}(T_\pi)$ separates even. This proves \ref{claim3}.

Secondly, by the form of $IT_\pi$ we can check that
$$\#(Kr_{2n,2m}(T_\pi))=n+m+k=\#(Kr_{2n,2m}(\pi)).$$
On the other hand, we also know that $\#(T_\pi)=\#(\tau \pi \tau^{-1})=\#(\pi)$. Thus, \ref{claim4} follows from the fact that $\pi$ is a non-crossing annular permutation:
$$\#(T_\pi)+\#(Kr_{2n,2m}(T_\pi))=\#(\pi)+\#(Kr_{2n,2m}(\pi))=2m+2n.$$

To check that there is an $x\in[2n]$ such that $T_\pi(x)\in[2m]$ we also use the fact that $\pi$ is a non-crossing annular permutation, and in particular there exists $x'\in[2n]$ such that $\pi(x')\in[2m]$. We separate in two cases:
\begin{itemize}
    \item If $x'=2i-1$, then we directly have $T_\pi(2j)=\tau(\pi(x'))\in [2m]$. 
    \item If $x'\neq 2i-1$, then $x'\notin \{2i-1,\dots,2j\}$, because $\pi(\{2i,\dots,2j\}) =\{2i,\dots,2j\} \subset [2n]$. We conclude that $T_\pi(x')=\pi(x')\in[2m]$.
\end{itemize}
Thus \ref{claim5} is proved.

Now, to corroborate \ref{claim6} we provide a bijection from $\GG_{\pi}$ to $\GG_{T_\pi}$ that preserves the edges. Let $\Phi$ be a map from the cycles of $\pi$ to the cycles of $T_\pi$ such that the cycle $C=(c_1,\dots,c_r)$ of $\pi$  is mapped to the cycle $\Phi(C):=(\tau(c_1),\dots, \tau(c_n))$ of $T_\pi$.

To check that $\Phi$ preserves edges, recall that two cycles $C$ and $D$ are connected if and only if each contain one element of the pair $2i-1,2i$ for some $1\leq i\leq n+m$. Using that $I\tau=\tau I$ we then notice that $\tau(2i-1)=\tau(I(2i))=I(\tau(2i))$, then each of $\Phi(C)$ and $\Phi(D)$ contain one element of the pair $\tau(2i)$ and $I(\tau(2i))$ that by definition of $I$ is of the form $2j-1,2j$ for some $1\leq j\leq n+m$. Thus $\Phi$ is the desired bijection.
Notice in particular that if  $\pi = \pi' \sqcup \pi''$ is a bipartition of $\pi$, then $T_\pi = \Phi(\pi') \sqcup \Phi(\pi'')$ is a bipartition of $T_\pi$, so $\pi$ and $T_\pi$ satisfy \ref{condition.1}.

Finally, to prove that the two permutations have opposite signs, we notice that the permutation $\tau$ preserves the parity for all the elements except for $2i-1$ and $2i$, where the parity changes. Thus, if we let $C$ be the cycle of $\pi$ containing $2i$, and $D$ the cycle containing $2i-1$, then $C$ and $D$ are connected by an edge and thus in different parts of the bipartition $\pi = \pi' \sqcup \pi''$. Assume that 
$\pi':=\{C,C_1,\dots,C_{r}\}$ and $\pi'':=\{D,D_{1},\dots,D_{s}\}$. By comparing $\pi'$ with the corresponding part $\Phi(\pi')=\{\Phi(C),\Phi(C_1),\dots,\Phi(C_{r})\}$ of $T_\pi$, we notice that
$$|\{ k\in C_s:\, k \text{ is even}\}|=|\{ k\in \Phi(C_s):\, k \text{ is even}\}| \qquad \text{ for }s=1,\dots,r,$$
whereas 
$$|\{ k\in C:\, k \text{ is even}\}|=|\{ k\in \Phi(C):\, k \text{ is even}\}|+1.$$
We conclude that the number of even elements in 
$$A(\varepsilon_\pi):=\cup\{C,C_1,\dots,C_{r}\}\qquad \text{and} \qquad A(\varepsilon_{T_\pi})=\cup\{\Phi(C),\Phi(C_1),\dots,\Phi(C_{r})\}$$
differs in 1. So we conclude that
$$s(T_\pi):=(-1)^{|\{ k\in A(\varepsilon_{T_\pi}):\, k \text{ is even}\}|}=(-1)^{|\{ k\in A(\varepsilon_\pi):\, k \text{ is even}\}|+1}=-s(\pi),$$
which is \ref{claim7}.
\end{proof}

\begin{remark}
The map $T$ is actually a bijection from $\FF^E_{2n,2m}$ to $\FF^O_{2n,2m}$. One way to check this is by looking at its inverse $U$. Assume $\delta\in \FF^O_{2n,2m}$ is an odd cancelable permutation with $f_{\delta}=2j-1 <2n$, and let $2i$ be the first even number after $2j-1$ in the list $L_\delta$, then the list is of the from
$$L_\delta= \dots, 2i-2,\dots,2j-1,\dots, 2i,\dots, 2j, \dots.$$
Define $\tau'\in S_{2n+2m}$ by letting
\begin{align*}
\tau(2j-1)&=2i, \\ 
\tau(2j)&=2i-1,\\
\tau(t)&=t+2, \qquad \text{whenever }t\in \{2i-1,\dots,2j-2\} \\
\tau(t)&=t, \qquad \qquad  \text{whenever }t\in \{1,2,\dots, 2n+2m\} \setminus \{2i-1,\dots,2j\}.
\end{align*}
Then, define the map 
$$U_\delta:= \tau \delta \tau^{-1}$$
Following the proof of Proposition \ref{prop:properties.T}, one check that $U$ maps $\FF^O_{2n,2m}$ to $\FF^E_{2n,2m}$.

Moreover, $U$ is the inverse of $T$. This follows from noticing that if $T_\pi=\delta$ then $\tau_{\delta}= \tau_{\pi}^{-1}$ or equivalently, if $U_\delta=\pi$ then $\tau_{\delta}= \tau_{\pi}^{-1}$. 
\end{remark}

\subsection{Cancellation in the second sum.}
\label{ssec:cancel.sum2}

Let us briefly recall the results from \cite[Section 3]{nica1998commutators} used to commutator formula in the first order. The key fact is that they prove the existence of an involution $\Psi$ in the set of non-crossing partitions of $\{1,\dots,n\}$ which have at least one block with an odd number of elements. Such involution $\Psi$ is explicitly assigning to each block $V$ of a partition $\cV$ a corresponding block of the same size $\Psi(V)$ in $\Psi(\cV)$. More over, $\Psi$ changes the sign associated to the partition $\cV$, where the sign is defined analogously to ours sign $s(\pi)$ and keep tracks of the signs of the terms in the sum of the commutator. By pairing $\cV$ with $\Psi(\cV)$ Nica and Speicher managed to canceled all the terms in a sum indexed by this type of partitions (having at least one block of odd size). Thus greatly simplifying the sum. 

By considering the canonical permutation $\pi\in\NC(n)$ in the disk associated to the partition one can easily extrapolate such involution to permutations. Specifically, denoting by $\NC\OO(2n)$ the set of non-crossing permutations in the disk which have at least one cycle with an odd number of elements, there exists an involution $\Psi:\NC\OO(2n)\to\NC\OO(2n)$ such that
\begin{itemize}
    \item $\Psi(\pi)$ has the same cycle type as $\pi$.
    \item $s(\pi)=-s(\Psi(\pi))$.
\end{itemize}
Moreover, to every cycle $C$ of $\pi$ we can associate a unique cycle $\Psi(C)$ of $\Psi(C)$ with the same size. 

We can further extend $\Psi$ to an involution $\Phi$ in the set of $\NCac\OO_{2n,2m}$ of permutations $\pi\in\NCac_{2n,2m}$ such that $\pi$ has at least one cycle of odd size. The new involution $\Phi$ will allow us to cancel several terms in the second sum of Theorem \ref{Thm.commutator.main}.

First notice that the set of partitioned permutations $(\cU,\pi) \in \NCac_{2n,2m} \subset S_{\NC}^\prime(2n,2m)$ can be alternatively described by quadruples $(\pi_1,\pi_2, C_1,C_2)$ where $\pi_1\in \NC(2n)$, $\pi_2\in \NC(2m)$, $C_1$ is a block of $\pi_1$ and $C_2$ is a block of $\pi_2$. Given such quadruple, then $\pi$ is retrieved as $\pi=\pi_1\sqcup \pi_2$ and $\cU$ is the partition corresponding to $\pi$ after merging $C_1$ and $C_2$. 

We now define the map $\Phi:\NCac\OO_{2n,2m} \to \NCac\OO_{2n,2m}$ as follows:
\begin{itemize}
\item If $\pi_1$ has a cycle of odd length, then $(\pi_1,\pi_2, C_1,C_2)\mapsto (\Psi(\pi_1),\pi_2, \Psi(C_1),C_2)$.
\item Otherwise, $\pi_2$ must have a cycle of odd length and then $(\pi_1,\pi_2, C_1,C_2)\mapsto(\pi_1,\Psi(\pi_2), C_1,\Psi(C_2))$. 
\end{itemize}

The fact that $\Phi$ is an involution follows easily from the fact that $\Psi$ is an involution. Following the same procedure as in the first sum, we can check that $\Phi$ preserves the graph structure while changes the sign. Thus, if $\pi$ has the natural decomposition $\pi =\pi'_1\sqcup \pi''_1 \sqcup A_1\sqcup B_1$ and $\Phi(\pi)$ has the natural decomposition $\Phi(\pi) =\pi'_2\sqcup \pi''_2 \sqcup A_2\sqcup B_2$, then $|A_1|=|A_2|$, $|B_1|=|B_2|$, $\freec{\pi'_1}{a}=\freec{\pi'_2}{a}$, $\freec{\pi''_1}{b}=\freec{\pi''_2}{b}$ and $s(\pi)=-s(\Phi(\pi) )$. So we conclude that the two terms in the sum cancel each other.
\begin{equation}
\label{eq:sum2.cancellation}
s(\pi) \freec{|A_1|,|B_1|}{a}\, \freec{\pi'_1}{a} \, \freec{\pi''_1}{b}+s(\Phi(\pi) ) \freec{|A_2|,|B_2|}{a}\, \freec{\pi'_2}{a} \, \freec{\pi''_2}{b}=0. 
\end{equation}

By canceling all this terms we end up with a sum indexed only by the set $\NCac\EE_{2n,2m}$ of permutations $\pi\in\NCac_{2n,2m}$ such that every cycle of $\pi$ has even size.

\subsection{Proof of main formula for the commutator.}

We are now ready to proof the main result of this section, Theorem \ref{Thm.commutator.main.2.intro} that gives a simplified formula to compute the second order cumulants of the commutator $ab-ba$ in terms of the cumulants of $a$ and $b$.

\begin{proof}[Proof of Theorem \ref{Thm.commutator.main.2.intro}]
By Theorem \ref{Thm.commutator.main}, we know that 
\begin{align*}
\freec{n,m}{ab-ba} =& \sum_{\substack{\pi \in \JJ_{2n,2m}\\ \pi=\pi' \sqcup \pi''}}s(\pi) \left(\freec{\pi'}{a} \, \freec{\pi''}{b}  + (-1)^{m+n} \freec{\pi'}{b} \, \freec{\pi''}{a}  \right) \\ & + \sum_{\substack{(\cU,\pi)\in \NCac_{2n,2m}\\ \pi =\pi'\sqcup \pi'' \sqcup A\sqcup B }} s(\pi) \left(\freec{|A|,|B|}{a}\, \freec{\pi'}{a} \, \freec{\pi''}{b} + (-1)^{m+n} \freec{|A|,|B|}{b}\, \freec{\pi'}{b} \, \freec{\pi''}{a}  \right).
\end{align*}

In the first sum, using the bijection $T$ from Definition \ref{def:bijection}, we can use 

Proposition \ref{prop:properties.T} to pair each even cancelable permutation $\pi\in\FF^E_{2n,2m}$ with an odd cancelable permutations $T_\pi\in \FF^O_{2n,2m}$, such that $\pi$ and $T_\pi$ satisfy \ref{condition.1} and \ref{condition.2}. By Equation \eqref{eq:first.sum.cancel}, this implies that the corresponding terms in the sum will cancel. Thus, instead of using $\JJ_{2n,2m}$ we can index the first sum by the smaller set $\JJ_{2n,2m} \setminus \FF_{2n,2m}=\AA_{2n,2m}$ of admissible partitions introduced in Definition \ref{def:admissible}:
$$
\sum_{\substack{\pi \in \JJ_{2n,2m}\\ \pi=\pi' \sqcup \pi''}}s(\pi) \left(\freec{\pi'}{a} \, \freec{\pi''}{b}  + (-1)^{m+n} \freec{\pi'}{b} \, \freec{\pi''}{a}  \right)=\sum_{\substack{\pi \in \AA_{2n,2m}\\ \pi=\pi' \sqcup \pi''}}s(\pi) \left(\freec{\pi'}{a} \, \freec{\pi''}{b}  + (-1)^{m+n} \freec{\pi'}{b} \, \freec{\pi''}{a}  \right).$$

Similarly for the second sum, we can use the involution $\Phi$ from Section \ref{ssec:cancel.sum2} to cancel permutations in $\NCac\OO_{2n,2m}$, which have at least one cycle of odd size. Thus, instead of using $\NCac_{2n,2m}$ we can index the first sum by the smaller set $\NCac_{2n,2m} \setminus \NCac\OO_{2n,2m}=\NCac\EE_{2n,2m}$ of permutations $\pi$ such that every cycle of $\pi$ has even size. Finally, one can check that $\NCac\EE_{2n,2m}$ is non-empty only when both $n$ and $m$ are even. Furthermore, the permutations in this set are of a very structured form, in the sense that given $\pi\in\NCac\EE_{2n,2m}$, the two tuples that contribute (from Notation \ref{Notation: Deiniftion of two epsilons}) are precisely $\varepsilon_\pi=(1,*,1,*,\dots ,1,*)$ and $\varepsilon'_\pi=(*,1,*,1,\dots ,*,1)$. In particular, $s(\pi)=1$. Thus the second sum has only positive sign and is cancellation free.
\end{proof}

\subsection{A concrete formula for small \texorpdfstring{$n$}{n} and \texorpdfstring{$m$}{m}.}
\label{ssec:examples.commutator}

In order to exemplify how our formula works, we will compute the second order cumulants for small values of $m$ and $n$. Since the sums are similar to those of the anti-commutator, but now with some possible signs, we will make use of our computations from Section \ref{ssec:concrete}.\\

For the case $m=n=1$, one can check that $\JJ_{2,2}=\AA_{2,2}$, as the unique permutation in the set, $\pi=(1,3)(2,4)$, is admissible. On the other hand, since $n$ is odd, the second sum vanishes. Thus, we conclude that given second order free random variables $a$ and $b$, the $(1,1)$-cumulant of their commutator is
$$\freec{1,1}{ab-ba} = 2\freec{2}{a}\freec{2}{b}.$$

For the case $n=2$, $m=1$, one can check that out of the 14 permutations in $\JJ_{4,2}$, 12 have flexible edges. Thus, there are only two admissible partitions:
$$(1,6,4)(2,3,5) \qquad \text{and}\qquad (1,5,4)(2,3,6). $$
By looking at the signs we obtain that the first sum is
$$2\freec{3}{a} \, \freec{3}{b} -2\freec{3}{a} \, \freec{3}{b}=0,$$
Since $m$ is odd, the second sum also vanishes, so we conclude that
$$\freec{2,1}{ab-ba} = \freec{1,2}{ab-ba} =0.$$
Notice, that this is already an instance where our formula is not cancellation free.\\

Lett us briefly mention the case $n=m=2$. Using the very structured form of $I\pi$ one can find that there are 20 admissible permutations in $\AA_{4,4}$. On the other $\NCac\EE_{4,4}$ has only four partitioned permutations $(\cV,\pi)$. After checking that all of the terms in the sums have a positive signs, we conclude that  
$$\freec{2,2}{ab-ba} = 4\freec{4}{a}\freec{4}{b}+12(\freec{2}{a}\freec{2}{b})^2 +12\freec{4}{a}(\freec{2}{b})^2+12\freec{4}{b}(\freec{2}{a})^2+4\freec{2,2}{a}(\freec{2}{b})^2+4\freec{2,2}{b}(\freec{2}{a})^2.$$

\begin{remark}
\label{rem:odd.cumulants.appear}
Finally, we would like to highlight and interesting phenomenon appearing in the second order commutator formula. There might be terms in the first sum that are indexed by the permutations containing cycles of odd size. In particular, the second order free commutator does depend on the moments of odd size. This is in contrast to the first order case, in which the sum is indexed only by partitions with all blocks of even size, thus having the important fact that the commutator in the first order only depends on the even moments. For an example of this behavior, one need to check the formula for the (2,4) and (3,3) cumulants. Computing the exact formula might require some machine help, but one can focus on some particular terms that are sure to survive.
For instance, when computing the first sum in the formula for $\freec{3,3}{ab-ba}$ one needs to find admissible partitions in $\AA_{6,6}$. If we only focus on all the admissible permutations that yield a term of the form 
$$(\freec{3}{a})^2(\freec{3}{b})^2.$$
Then we are only interested in permutations $\pi$ with 4 cycles, each of size 3. An example of an admissible permutation of that type is
$$\pi_0=(1,8,6)(2,12,7)(3,10,11)(4,5,9).$$
It is not hard to check that there are in total 9 permutations of this type. To compute them, one may conjugate $\pi_0$ by $\tau=(135)(246)(7)(8)(9)(10)(11)(12)$ or by $\delta=(1)(2)(3)(4)(5)(6)(7,9,11)(8,10,12)$.
One can also notice that the sign assigned to each of these permutations is always negative. Thus we conclude that
$$\freec{3,3}{ab-ba}=-18(\freec{3}{a})^2(\freec{3}{b})^2+c,$$
where $c$ does not have any term of the form $(\freec{3}{a})^2(\freec{3}{b})^2$. 

Similarly, when looking at the same term in the expansion for 
$\freec{4,2}{ab-ba}$, one can find that there 4 admissible permutations of the required type, each obtained by conjugating
$$\pi_0=(1,9,8)(2,3,12)(4,5,11)(6,7,10),$$
by $(1)(2)\dots (8)(9,10,11,12)$. In this case all the signs are positive and we obtain
$$\freec{4,2}{ab-ba}=8(\freec{3}{a})^2(\freec{3}{b})^2+c,$$
where $c$ does not have any term of the form $(\freec{3}{a})^2(\freec{3}{b})^2$. 
\end{remark}
\section{Product}\label{secc:product}

Let us show how our graph methods help us to write the second order free cumulants of the product $ab$ of two free second order variables $a,b$.

\begin{notation}
Recall that we denote $\mathcal{O}:=\{1,\dots,2n+2m-1\}$ and $\mathcal{E}:=\{2,\dots,2n+2m\}$. Given a permutation $\pi\in S_{\NC}(2n,2m)$ we say that
\begin{enumerate}
    \item $\pi$ is parity preserving if for every cycle $V\in\pi$ either $V\subset \mathcal{O}$ or $V\subset \mathcal{E}$. We denote the set of parity preserving permutations $S_{\NC}^{par}(2n,2m)$.
    \item $\pi$ is a non-crossing pairing if every cycle has size $2$. We let $\NC_2(2n,2m)$ to be the set of non-crossing pairings.
    \item We denote by $\NC_2^{non-par}(2n,2m)$ to be the subset of $\NC_2(n,m)$ with no parity preserving cycles.
    \item We let $\NC_2^{non-par}(n)$ the set of all non-crossing pairings $\pi\in \NC_2(2n)$ with no parity preserving cycles.
\end{enumerate}
\end{notation}

\begin{proposition}\label{Proposition: Non paritity preserving pairing implies kreweras is parity preserving}
Let $n,m\geq 1$, and $\pi\in \NC_2^{non-par}(2n,2m)\cup \NC_2^{non-par}(2n)\times \NC_2^{non-par}(2m)$. Then $\gamma\pi$ is parity preserving.
\end{proposition}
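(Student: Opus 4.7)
The plan is to argue directly from how $\gamma$ and $\pi$ act on the parities of elements in $[2n+2m]$. The key observation is that under our hypotheses both $\gamma$ and $\pi$ \emph{swap} the parity of every element they act on, so their composition $\gamma\pi$ \emph{preserves} parity, which is exactly the statement to be proved.

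More concretely, I would first note that $\gamma=(1,\dots,2n)(2n+1,\dots,2n+2m)$ sends each $k$ to its cyclic successor in its cycle. In particular $\gamma(k)$ differs from $k$ by $\pm 1$ modulo $2n$ or $2m$, so $\gamma(\mathcal{O})=\mathcal{E}$ and $\gamma(\mathcal{E})=\mathcal{O}$, where $\mathcal{O}$ and $\mathcal{E}$ are the odd and even subsets of $[2n+2m]$. Next, by the hypothesis that $\pi$ is a non-parity-preserving pairing, every 2-cycle $(i,j)$ of $\pi$ has one odd and one even element; equivalently $\pi(\mathcal{O})=\mathcal{E}$ and $\pi(\mathcal{E})=\mathcal{O}$.

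Composing these two facts yields
\[
(\gamma\pi)(\mathcal{O})=\gamma(\mathcal{E})=\mathcal{O},\qquad (\gamma\pi)(\mathcal{E})=\gamma(\mathcal{O})=\mathcal{E},
\]
so $\gamma\pi$ preserves parity on the nose. Since any cycle of a parity-preserving permutation is contained either in $\mathcal{O}$ or in $\mathcal{E}$, we conclude that $\gamma\pi \in S_{\NC}^{par}(2n,2m)$ (or in the obvious parity-preserving subset of $\NC(2n)\times\NC(2m)$, in the product case). This argument is identical in both cases of the hypothesis, because in either case $\pi$ is a pairing on $[2n+2m]$ with every block containing one odd and one even element, and $\gamma$ acts as cyclic successor on each of its two cycles. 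There is no real obstacle here: the statement is essentially a parity bookkeeping observation, and no use of non-crossingness is needed beyond what the hypothesis already encodes.
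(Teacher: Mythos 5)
Your argument is correct and is exactly the paper's proof: the paper likewise observes that both $\pi$ and $\gamma$ map even numbers to odd numbers and vice versa, so the composition $\gamma\pi$ preserves parity. Your write-up just spells out the parity bookkeeping (including the wrap-around points $\gamma(2n)=1$ and $\gamma(2n+2m)=2n+1$) in more detail.
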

\begin{proof}
The proof follows immediately from the fact that both $\pi$ and $\gamma$ map even numbers into odd numbers and viceversa.
\end{proof}

\begin{lemma}\label{Lemma: Second order cumulants of free product.v1}
Let $a,b$ be two second order free random variables, then for every $n,m\geq 1$
\begin{eqnarray*}
\kappa_{n,m}(ab)&=&\sum_{\pi\in \NC_2^{non-par}(2n,2m)}\left(\prod_{\substack{W\in \gamma\pi \\ W\subset \mathcal{O}}}\freec{|W|}{a} \right)\left(\prod_{\substack{W\in \gamma\pi \\ W\subset\mathcal{E}}}\freec{|W|}{b} \right) \\
& + & \sum_{\pi}\sum_{\substack{U\in \gamma\pi \cap [2n] \\ V\in \gamma\pi \cap [2m] \\ U,V\subset\mathcal{O}}}\kappa_{|U|,|V|}(a)\left(\prod_{\substack{W\in \gamma\pi \\ W\subset \mathcal{O} \\ W\neq U,V}}\freec{|W|}{a} \right)\left(\prod_{\substack{W\in \gamma\pi \\ W\subset\mathcal{E}}}\freec{|W|}{b} \right) \\
& + & \sum_{\pi}\sum_{\substack{U\in \gamma\pi \cap [2n] \\ V\in \gamma\pi \cap [2m] \\ U,V\subset\mathcal{E}}}\kappa_{|U|,|V|}(b)\left(\prod_{\substack{W\in \gamma\pi \\ W\subset \mathcal{O}}}\freec{|W|}{a} \right)\left(\prod_{\substack{W\in \gamma\pi \\ W\subset\mathcal{E}\\ W\neq U,V}}\freec{|W|}{b} \right)
\end{eqnarray*}
where the second and third sums are over $\pi=\pi_1\times \pi_2\in \NC_2^{non-par}(2n)\times \NC_2^{non-par}(2m)$. Observe that the blocks of $\gamma\sigma$ are completely contained in either $\mathcal{O}$ or $\mathcal{E}$, thanks to Proposition \ref{Proposition: Non paritity preserving pairing implies kreweras is parity preserving}.
\end{lemma}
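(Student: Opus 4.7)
The plan is to derive this identity from the products-as-arguments formula for second order free cumulants, applied to $\kappa_{n,m}(ab,\dots,ab)$ with $a_{2i-1}=a$ and $a_{2i}=b$ for $i=1,\dots,n+m$, yielding
\begin{equation*}
\kappa_{n,m}(ab,\dots,ab) \;=\; \sum_{\substack{(\mathcal{U},\sigma)\in \PS_{NC}(2n,2m)\\ \sigma^{-1}\gamma\text{ sep.\ even}}} \kappa_{(\mathcal{U},\sigma)}(a,b,a,b,\dots,a,b).
\end{equation*}
Because $a$ and $b$ are second order free, vanishing of mixed cumulants kills every contribution unless each block of $\mathcal{U}$ lies entirely in $\mathcal{O}$ (the positions of the $a$'s) or entirely in $\mathcal{E}$ (the positions of the $b$'s); in particular $\sigma$ itself must be parity preserving, meaning each of its cycles is contained in $\mathcal{O}$ or in $\mathcal{E}$.

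The core step is to show that the pairs $(\mathcal{U},\sigma)$ with $\sigma$ parity preserving and $\sigma^{-1}\gamma$ separating even are in bijection with the data indexing the right-hand side, via $\pi := \sigma^{-1}\gamma$ (equivalently $\sigma = \gamma\pi$). Since $\gamma$ flips parity while $\sigma^{-1}$ preserves it, every cycle of $\sigma^{-1}\gamma$ alternates between odd and even elements, hence has even length with equally many members of each parity. The separation hypothesis caps the even members at one per cycle, forcing each cycle to have length exactly two with one odd and one even element. Thus $\pi$ is a non-crossing pairing with no parity-preserving pair. Conversely, given such a $\pi$, the permutation $\sigma := \gamma\pi$ preserves parity (the composition of two parity-flipping maps) and coincides with the inverse Kreweras complement of $\pi$, which preserves both $S_{NC}(2n,2m)$ and $NC(2n)\times NC(2m)$; hence $\sigma$ is a non-crossing object of the correct type, and $\sigma^{-1}\gamma=\pi$ automatically separates even.

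Splitting this bijection by type yields the three sums. For Type~1 the single annular permutation $\sigma=\gamma\pi$ with $\pi\in\NC_2^{non-par}(2n,2m)$ contributes, for each cycle $W$, a first order cumulant $\freec{|W|}{a}$ or $\freec{|W|}{b}$ according as $W\subset\mathcal{O}$ or $W\subset\mathcal{E}$. For Type~2 we have $\pi=\pi_1\times\pi_2\in\NC_2^{non-par}(2n)\times\NC_2^{non-par}(2m)$ and an extra choice of a cycle $U$ of $\sigma_1=\gamma_1\pi_1$ and a cycle $V$ of $\sigma_2=\gamma_2\pi_2$ that are merged into a single block of $\mathcal{U}$; the non-vanishing of the resulting second order cumulant $\kappa_{|U|,|V|}$ forces $U$ and $V$ to lie in the same parity set, producing $\kappa_{|U|,|V|}(a)$ when $U,V\subset\mathcal{O}$ and $\kappa_{|U|,|V|}(b)$ when $U,V\subset\mathcal{E}$, which are exactly the second and third sums.

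The main technical obstacle is the parity/length argument which shows that $\sigma^{-1}\gamma$ is forced to be a pairing once $\sigma$ is parity preserving and $\sigma^{-1}\gamma$ separates even; everything else is routine bookkeeping exploiting the bijectivity of the Kreweras complement on the disk and the annulus.
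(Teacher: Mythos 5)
Your proposal is correct and follows essentially the same route as the paper: apply the second order products-as-arguments formula to $\kappa_{n,m}(ab,\dots,ab)$, use vanishing of mixed cumulants to force the partitioned permutation to be parity preserving, observe that parity alternation in $\sigma^{-1}\gamma$ together with the separation-of-evens condition forces $\pi=\sigma^{-1}\gamma$ to be a non-parity-preserving pairing, re-index via the Kreweras bijection $\sigma=\gamma\pi$, and split the Type~1 and Type~2 contributions (with the same-parity constraint on the merged cycles) into the three sums. The paper phrases this as specializing the anti-commutator argument to the single tuple $\varepsilon=(1,\dots,1)$, but the content is identical to yours.
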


We can rephrase Lemma \ref{Lemma: Second order cumulants of free product.v1} in terms of graphs.

\begin{lemma}\label{Lemma: Second order cumulants of free product.v2}
Let $a,b$ be two second order free random variables, then for every $n,m\geq 1$
\begin{eqnarray*}
\kappa_{n,m}(ab)&=&\sum_{\substack{\pi\in S_{\NC}^{par}(2n,2m)\\ \GG_\pi\text{ unicyclic}}}\left(\prod_{\substack{W\in \pi \\ W\subset \mathcal{O}}}\freec{|W|}{a} \right)\left(\prod_{\substack{W\in \pi \\ W\subset\mathcal{E}}}\freec{|W|}{b} \right) \\
& + & \sum_{\substack{\pi=\pi_1\times\pi_2 \\ \GG_{\pi_1}\text{ tree} \\ \GG_{\pi_2}\text{ tree}}}\sum_{\substack{U\in \pi_1 \\ V\in \pi_2 \\ U,V\subset\mathcal{O}}}\kappa_{|U|,|V|}(a)\left(\prod_{\substack{W\in \pi \\ W\subset \mathcal{O} \\ W\neq U,V}}\freec{|W|}{a} \right)\left(\prod_{\substack{W\in\pi \\ W\subset\mathcal{E}}}\freec{|W|}{b} \right) \\
& + & \sum_{\substack{\pi=\pi_1\times\pi_2 \\ \GG_{\pi_1}\text{ tree} \\ \GG_{\pi_2}\text{ tree}}}\sum_{\substack{U\in \pi_1 \\ V\in \pi_2 \\ U,V\subset\mathcal{E}}}\kappa_{|U|,|V|}(b)\left(\prod_{\substack{W\in\pi \\ W\subset \mathcal{O}}}\freec{|W|}{a} \right)\left(\prod_{\substack{W\in\pi \\ W\subset\mathcal{E}\\ W\neq U,V}}\freec{|W|}{b} \right)
\end{eqnarray*}
where the second and third sums are over $\pi=\pi_1\times \pi_2\in \NC^{par}(2n)\times \NC^{par}(2m)$.
\end{lemma}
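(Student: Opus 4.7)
The plan is to prove Lemma \ref{Lemma: Second order cumulants of free product.v2} as a direct translation of Lemma \ref{Lemma: Second order cumulants of free product.v1} under the substitution $\sigma := \gamma\pi$, so the main task is to establish that $\pi \mapsto \gamma\pi$ gives a bijection between the indexing sets of the two lemmas. For the annular sum I would show the map is a bijection between $\NC_2^{non-par}(2n,2m)$ and the set of $\sigma \in S_{\NC}^{par}(2n,2m)$ with $\GG_\sigma$ unicyclic; for the second and third sums the same map, applied separately to each factor, gives a bijection between $\NC_2^{non-par}(2n) \times \NC_2^{non-par}(2m)$ and pairs $(\sigma_1, \sigma_2) \in \NC^{par}(2n) \times \NC^{par}(2m)$ with each $\GG_{\sigma_k}$ a tree.

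I would begin with the forward direction in the annular case. Starting from $\pi \in \NC_2^{non-par}(2n,2m)$ and $\sigma = \gamma\pi$, parity preservation of $\sigma$ is Proposition \ref{Proposition: Non paritity preserving pairing implies kreweras is parity preserving}. That $\sigma \in S_{\NC}(2n,2m)$ follows from two direct computations: since $\sigma(x) = \gamma(\pi(x))$ places $x$ and $\pi(x)$ in a common block of $\sigma \vee \gamma$, we have $\sigma \vee \gamma \geq \pi \vee \gamma = \boldsymbol{1}$; and the conjugation $\pi^{-1}\sigma\pi = \pi^{-1}\gamma$ (using $\pi^2=\mathrm{id}$) together with $\sigma^{-1}\gamma = \pi^{-1}\gamma^{-1}\gamma = \pi$ yield $\#(\sigma) + \#(\sigma^{-1}\gamma) = \#(\pi^{-1}\gamma) + \#(\pi) = 2n+2m$ by the defining identity for $\pi \in S_{\NC}(2n,2m)$. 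This also gives $\#(\sigma) = n+m$, so $\GG_\sigma$ has $V = E = n+m$, and because each edge joins an odd block of $\sigma$ to an even one, $\GG_\sigma$ is bipartite.

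The key technical step is showing $\GG_\sigma$ is connected, equivalently $\sigma \vee I = \boldsymbol{1}$ by Lemma \ref{Lemma.connected}. I would argue by contradiction: assume a non-trivial partition $\{A, A^c\}$ is coarser than both $\sigma$ and $I$, so $A$ is a union of blocks of $\sigma$ and of pairs $\{2i-1, 2i\}$. A careful parity-and-residue analysis of how $\pi = \gamma^{-1}\sigma$ acts on $A$, leveraging the absence of parity-preserving pairs in $\pi$, would force $\pi$ to also respect $\{A, A^c\}$, and hence so would $\gamma = \sigma\pi$; this contradicts $\pi \vee \gamma = \boldsymbol{1}$. Once connectivity is established, $V = E$ forces $\GG_\sigma$ to be unicyclic. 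The reverse direction is easier: given such a $\sigma$, setting $\pi = \gamma^{-1}\sigma$ produces a permutation with no fixed points (since $\sigma(x)$ and $x$ share parity while $\gamma(x)$ has the opposite parity) and whose cycles all mix parities; combined with $\#(\sigma) = n+m$, which forces $\#(\pi) = n+m$ on $2n+2m$ elements, this implies $\pi$ is a pairing and hence $\pi \in \NC_2^{non-par}(2n,2m)$.

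The disk $\times$ disk case is analogous, yielding $\#(\sigma_k) = n_k+1$ and thus graphs $\GG_{\sigma_k}$ with $V_k = n_k+1$, $E_k = n_k$ that become trees once connectivity is established. The translation of cumulant products is then immediate: the cycles $W$ of $\sigma$ are exactly the blocks appearing in Lemma \ref{Lemma: Second order cumulants of free product.v1}, with $W \subset \OO$ (resp.\ $W \subset \EE$) determining whether the factor is $\freec{|W|}{a}$ or $\freec{|W|}{b}$, and picking $U \in \sigma_1$, $V \in \sigma_2$ in the second and third sums of v2 corresponds exactly to choosing $U \in \gamma\pi \cap [2n]$, $V \in \gamma\pi \cap [2m]$ in v1. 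The main obstacle is the connectivity argument for $\GG_\sigma$: while $V = E$ (or $V = E+1$) is automatic from the counts above, ruling out disconnected graphs requires the careful interplay of $\pi, \gamma, I$ and the non-parity-preserving hypothesis sketched in the third paragraph.
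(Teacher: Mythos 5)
Your proposal follows essentially the same route as the paper: Lemma \ref{Lemma: Second order cumulants of free product.v2} is deduced from Lemma \ref{Lemma: Second order cumulants of free product.v1} by showing that $\rho\mapsto\gamma\rho$ identifies $\NC_2^{non-par}(2n,2m)$ with the set of parity-preserving annular permutations whose graph is unicyclic (and, factorwise, $\NC_2^{non-par}(2n)\times\NC_2^{non-par}(2m)$ with pairs of parity-preserving disk partitions whose graphs are trees), the decisive counts being $\#(\gamma\rho)=n+m$ (resp.\ $n+1$ and $m+1$) together with connectivity of the graph. Your verifications that $\sigma:=\gamma\rho$ lies in $S_{\NC}(2n,2m)$, that $\#(\sigma)=n+m$, and the converse direction (no fixed points plus the cycle count forces $\sigma^{-1}\gamma$ to be a non-parity-preserving pairing) are all correct and match the paper's reasoning.

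The only place where you stop short of a complete argument is precisely the step you flag as the main obstacle: connectivity of $\GG_{\sigma}$. You propose to re-derive it by a contradiction argument that is left as a sketch (``a careful parity-and-residue analysis \dots would force \dots''). This step is in fact already available in the paper and requires no new work: since $\sigma^{-1}\gamma=\rho^{-1}\gamma^{-1}\gamma=\rho\in\NC_2^{non-par}(2n,2m)$, every cycle of $\sigma^{-1}\gamma$ consists of one even and one odd element, so $\sigma^{-1}\gamma$ separates even and Proposition \ref{Proposition: separates implies connected} (with Lemma \ref{Lemma.connected}) yields connectedness of $\GG_\sigma$ at once; in the disk factors the first-order statement \cite[Lemma 14]{mingo2009second} plays the same role. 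For completeness, your sketched contradiction can also be closed: the observation you need is that $\gamma$ agrees with $I$ on odd numbers, so a set $A$ that is a union of blocks of $\sigma$ and of pairs of $I$ satisfies $\rho(x)=\sigma^{-1}(\gamma(x))=\sigma^{-1}(I(x))\in A$ for every odd $x\in A$; since $\rho$ is a pairing with no parity-preserving pair, every pair of $\rho$ lies entirely in $A$ or in its complement, hence $\gamma=\sigma\rho$ preserves $A$, contradicting $\rho\vee\gamma=\boldsymbol{1}$. Either way the gap is closable, but invoking the existing proposition is shorter and is what the paper does.
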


\begin{proof}[Proof of Lemma \ref{Lemma: Second order cumulants of free product.v1}]
Notice that we can make use of Theorem \ref{Thm.anticommutator.main.2} with the only difference that there is only one $\varepsilon\in \{1,*\}^{n+m}$ given by $\varepsilon=(1,\dots,1)$ because we only have $\kappa_{n,m}(ab,ab,\dots,ab)$. Thus, in the first sum in Equation \eqref{formula.anticommutator.main.2} the permutations $\pi\in S_{\NC}(2n,2m)$ that we consider must be $\pi\in S_{\NC}^{par}(2n,2m)$ so that $\{A(\varepsilon),B(\varepsilon)\}=\{\mathcal{O},\mathcal{E}\}\geq \pi$. Hence, any cycle of $\gamma^{-1}\pi$ alternates even and odd numbers. Since we require that $\pi^{-1}\gamma$ separates even numbers then every cycle of $\pi^{-1}\gamma$ has size $2$ and it consist of an even and an odd number, this is $\rho=\pi^{-1}\gamma\in \NC_2^{non-par}(2n,2m)$. Conversely if $\rho=\pi^{-1}\gamma\in \NC_2^{non-par}(2n,2m)$ then $\pi=\gamma\rho\in S_{\NC}^{par}(2n,2m)$ and $\pi^{-1}\gamma$ separates even. This mean we can re-index the sum over $\rho\in \NC_2^{non-par}(2n,2m)$ and the permutation that corresponds to each $\rho$ is $\pi=\gamma\rho$. This gives the first term in the formula. For the second term, we have $\pi\in \NC(2n)\times \NC(2m)$, again it must be $\pi=\pi_1\times\pi_2\in \NC^{par}(2n)\times \NC^{par}(2m)$. The same argument as before shows $\pi^{-1}\gamma$ has only cycles of size $2$ consisting of one odd and one even number. Thus $\rho=\pi^{-1}\gamma\in \NC_2^{non-par}(2n)\times \NC_2^{non-par}(2m)$. From here the same argument shows that we can index our sum in terms of $\rho$. Finally we choose one cycle from each $\pi_1$ and $\pi_2$ to make a block of $\cU$. Since we require $\{A(\varepsilon),B(\varepsilon)\}=\{\mathcal{O},\mathcal{E}\}\geq \cU$ then we must choose two cycles with the same parity. There are two possible options, each corresponding to the second and third terms in the right hand side of the formula. 
\end{proof}

\begin{proof}[Proof of Lemma \ref{Lemma: Second order cumulants of free product.v2}]
This proof is similar to the previous. The first sum corresponding to $\pi\in S_{\NC}(2n,2m)$ must be such that $\pi\in S_{\NC}^{par}(2n,2m)$. The same argument as before shows $\#(\pi^{-1}\gamma)=n+m$ and therefore $\#(\pi)=n+m$, thus $\GG_\pi$ is a connected graph with the same number of edges and vertices, namely $\GG_\pi$ consist of one simple cycle. Conversely if $\GG_\pi$ is a cycle, then $\#(\pi)=n+m$ from where $\#(\pi^{-1}\gamma)=n+m$. Since $\pi\in S_{\NC}^{par}(2n,2m)$, then any cycle of $\pi^{-1}\gamma$ alternates even and odd numbers, which means that every cycle has at least size $2$ and therefore $\pi^{-1}\gamma$ has $n+m$ cycles each of size $2$, thus $\pi^{-1}\gamma\in \NC_2^{non-par}(2n,2m)$. So we can rewrite the first sum in Lemma \ref{Lemma: Second order cumulants of free product.v1} in terms of the graphs $\GG_\pi$ that have one cycle. For the second and third sums we proceed similarly. Now $\pi\in \NC^{par}(2n)\times \NC^{par}(2m)$ so $\#(\pi)=n+m+2$, further $\#(\pi_1)=n+1$ because $\#(\pi_1^{-1}\gamma|[2n])=n$ from where we conclude $\GG_{\pi_1}$ must be a tree as it has $n+1$ vertices, $n$ edges, and it is connected (because $\pi_1^{-1}\gamma|[2n]$ separates even). Conversely if $\GG_{\pi_1}$ is a tree then $\#(\pi_1)=n+1$, hence $\#(\pi_1^{-1}\gamma|[2n])=n$, as before this implies $\pi_1^{-1}\gamma\in \NC_2^{non-par}(2n)$. The same applies to $\GG_{\pi_2}$. Finally we can choose one cycle from each $\pi_1$ and $\pi_2$ that has the same parity, yielding the second and third terms in the formula.
\end{proof}

One of the advantages this approach with graphs is that it greatly simplifies the computations in some cases. For instance when we consider centered variables.

\begin{corollary}\label{Corollary: Product of centered second order free variables}
Let $a,b$ be two centered second order free variables, that is $\kappa_1(a)=\kappa_1(b)=0$. Then, if $n,m\geq 1$ with $n\neq m$ we get $\kappa_{n,m}(ab)=0$. Otherwise,
\begin{equation}
\kappa_{n,n}(ab)=n[\kappa_{2}(a)\kappa_2(b)]^n\qquad \text{for } n\geq 1.
\end{equation}
\end{corollary}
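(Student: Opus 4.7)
The plan is to apply Lemma \ref{Lemma: Second order cumulants of free product.v2} and observe that the centered hypothesis $\kappa_1(a)=\kappa_1(b)=0$ forces every block appearing in a surviving term to have size at least $2$. For the second and third sums, which are indexed by $\pi=\pi_1\times\pi_2$ with $\GG_{\pi_1}$ and $\GG_{\pi_2}$ trees, the partition $\pi_1\in\NC^{par}(2n)$ has $\#\pi_1=n+1$ (since a tree satisfies $|V|=|E|+1$) and its block sizes sum to $2n$. Thus if the distinguished block $U\in\pi_1$ satisfies $|U|\geq 1$ and every other block has size $\geq 2$, one gets $|U|+2n\leq 2n$, i.e.\ $|U|\leq 0$, impossible. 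Hence some block $W\neq U$ has size $1$ and the factor $\kappa_1$ kills the term; both of these sums vanish.

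For the first sum, indexed by $\pi\in S_{\NC}^{par}(2n,2m)$ with $\GG_\pi$ unicyclic (connected with $|V|=|E|$), one gets $\#\pi=|E|=n+m$. Since the block sizes sum to $2n+2m$ and centeredness demands every $|W|\geq 2$, every block of $\pi$ must have size exactly $2$. Thus $\pi\in\NC_2^{par}(2n,2m)\cap S_{\NC}(2n,2m)$ and each contributing $\pi$ yields the same value $(\kappa_2(a)\kappa_2(b))^{(n+m)/2}$. The result therefore reduces to counting this intersection.

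The key claim is that this intersection is empty when $n\neq m$ and has cardinality exactly $n$ when $n=m$. To establish the first, I would show every pair of $\pi$ must be a through-string. Suppose $\{i,j\}\subset[2n]$ with $i,j$ of the same parity; drawing the chord either the short or the long way around the annulus produces a simply connected ``cap'' region whose outer-circle points must be paired among themselves, since they cannot reach the inner hole without crossing $\{i,j\}$. A direct count shows the cap always contains $L_O$ odds and $L_E$ evens with $|L_O-L_E|=1$, so a parity-preserving internal pairing is impossible. Hence no within-outer (and, symmetrically, no within-inner) pair exists, all $n+m$ pairs of $\pi$ are through-strings, and balancing the $2n$ outer against the $2m$ inner elements forces $n=m$.

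For $n=m$, $\pi$ is determined by bijections $\alpha,\beta\colon\zz/n\to\zz/n$ pairing outer odds with inner odds and outer evens with inner evens, respectively. Writing out the condition $\#(\pi\gamma)=2n$, equivalently $(\pi\gamma)^2=\mathrm{id}$, in these cyclic indices reduces to the single-parameter system $\alpha(k)=c-k$, $\beta(k)=c-k-1$ with $c\in\zz/n$, giving exactly $n$ admissible pairings. Multiplying by the per-pairing contribution $(\kappa_2(a)\kappa_2(b))^n$ yields $\kappa_{n,n}(ab)=n(\kappa_2(a)\kappa_2(b))^n$. The main technical obstacle is the cap parity step, since one has to verify that the discrepancy $|L_O-L_E|=1$ persists regardless of which arc of the chord is chosen to be the cap.
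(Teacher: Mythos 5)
Your argument is correct and follows the same skeleton as the paper's proof: both start from Lemma \ref{Lemma: Second order cumulants of free product.v2}, both kill the two tree-indexed sums by producing a non-distinguished singleton block (your block-size count $|U|+2n\le 2n$ is just the numerical form of the paper's observation that a tree on $n+1$ vertices has a leaf, and a leaf of $\GG_{\pi_1}$ is a size-one block; at most one leaf can be the distinguished block, so a factor $\kappa_1=0$ always survives), and both reduce the unicyclic sum to parity-preserving non-crossing annular pairings, each contributing $(\kappa_2(a)\kappa_2(b))^{(n+m)/2}$. Where you genuinely diverge is in the enumeration of that set. The paper argues algebraically: since $\pi^{-1}\gamma$ is forced to be a non-parity-preserving pairing, starting from a single pair $(2k,2s)$ of $\pi$ the separation condition rigidly propagates and determines every other pair, which simultaneously forces $n=m$ and yields exactly $n$ spoke diagrams. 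You instead argue geometrically: a same-circle, same-parity pair would bound a cap whose points must be matched internally, and the cap has an odd number of points (your $|L_O-L_E|=1$ check, which does hold for either choice of arc — in fact mere cardinality already rules out any internal matching), so every pair is a through-string, whence $n=m$; you then solve $(\pi\gamma)^2=\mathrm{id}$ in cyclic coordinates to get the one-parameter family of $n$ spoke diagrams. One small point to make explicit: $\#(\pi\gamma)=2n$ alone does not give $(\pi\gamma)^2=\mathrm{id}$; you need to add that $\pi$ parity-preserving forces every cycle of $\pi\gamma$ to alternate parities and hence have length at least $2$, so $2n$ cycles on $4n$ points must all have length exactly $2$ (this is exactly how the paper's Lemma \ref{Lemma: Second order cumulants of free product.v2} identifies $\pi^{-1}\gamma$ with an element of $\NC_2^{non-par}$). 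The paper's route is shorter because it reuses that identification directly; yours has the advantage of making the annular geometry of the surviving diagrams visible and of producing the explicit parametrization $\alpha(k)=c-k$, $\beta(k)=c-k-1$.
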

\begin{proof}
We first apply Lemma \ref{Lemma: Second order cumulants of free product.v2}, and then look at which terms vanish. Notice that if a graph $\GG_\pi$ has a leave (meaning a vertex of degree $1$) then $\pi$ has a block of size $1$, say $U$, and since the variables are centered, then $\kappa_{|U|}(a)=\kappa_{|U|}(b)=0$ and the whole product  corresponding to $\pi$ vanishes. Thus we only keep the terms indexed by $\pi$ such that $\GG_\pi$ has no leaves.  Since the second and third sums are indexed by trees, that must have a leave, then all the terms vanish and we get 0. 
On the other hand, in the first sum we must have a simple cycle. So every cycle of $\pi$ must have size $2$. Further $\pi\in S_{\NC}^{par}(2n,2m)$ and then as proved in the proof of Lemma \ref{Lemma: Second order cumulants of free product.v2} it must be $\pi^{-1}\gammanm \in \NC_2^{non-par}(2n,2m)$, particularly $\pi^{-1}\gammanm$ separates even. Let $(2k,2s)$ be a cycle of $\pi$ then $\pi^{-1}\gamma$ has a cycle of the form
$$(\gamma^{-1}(2k),2s,\pi^{-1}(\gamma(2s)),\cdots).$$
If $\pi^{-1}(\gamma(2s)) \neq \gamma^{-1}(2k)$ then $\pi^{-1}(\gamma(\pi^{-1}(\gamma(2s))))$ is an even number distinct to $2s$ which is not possible. So, it must be $\pi^{-1}(\gamma(2s))=\gamma^{-1}(2k)$, thus $(\gamma(2s),\gamma^{-1}(2k))$ is a cycle of $\pi$. This means that given $(2k,2s)$ a cycle of $\pi$ all its other cycles are determined and hence the permutation $\pi$ is completely determined. Moreover, it must be $n=m$ and there are exactly $n$ possible permutations $\pi$ (such a permutations will appear latter on and will be called spoke diagrams). Finally, for each $\pi$ we get $\kappa_{\pi}(a,b,\dots,a,b)=[\kappa_{2}(a)\kappa_2(b)]^n$, yielding the desired result.
\end{proof}

To conclude, we prove Theorem \ref{Thm: Product of second order free variables} advertised in the Introduction. The result is just an alternative version of Lemmas \ref{Lemma: Second order cumulants of free product.v1} and \ref{Lemma: Second order cumulants of free product.v2}, but in this case it preserves the essence of the formula in the first order \eqref{Equation: free cumulants of product of free variables.intro}, due to Nica and Speicher \cite{nica1996multiplication}. As pointed out in Remark \ref{Remark: Similar result to cumulants of free variables}, our result generalizes that of \cite[Theorem 7.3]{arizmendi2023second}.

\begin{proof}[Proof of Theorem \ref{Thm: Product of second order free variables}]
First, consider $(\cU,\pi)\in S_{\NC}^\prime(2n,2m)$ such that $\pi^{-1}\gammanm$ separates even. In this case, as mentioned in the proof of Lemma \ref{Lemma: Second order cumulants of free product.v2}, we have $\pi=\pi_1\times \pi_2 \in \NC^{par}(2n)\times \NC^{par}(2m)$. Further, $\pi_1^{-1}\gamma_{2n}$ separates even which is equivalent to $\pi_1 \lor I_{2n}=1_{2n}$. Note that $\pi_1$ can be written as $\pi_{\mathcal{O}}\cup \pi_{\mathcal{E}}$ where $\pi_{\mathcal{O}}$ and $\pi_{\mathcal{E}}$ have only cycles contained in the set of odd $\mathcal{O}$ and even $\mathcal{E}$ numbers, respectively. Thus, we can replicate the proof of the formula \eqref{Equation: free cumulants of product of free variables.intro} (see \cite[Theorem 14.4]{NS}) to conclude that $\pi_{\mathcal{O}}$ is the Kreweras complement of $\pi_{\mathcal{E}}$. This means $\pi_1$ can be seen as the union of a partition in $\NC(n)$, which we also denote by $\pi_1$, and its Kreweras complement. The blocks of the partition $\pi_1$ correspond to the blocks contained in $\mathcal{O}$ in the original partition while the blocks of $Kr_n(\pi_1)$ correspond to the blocks of the original partition contained in $\mathcal{E}$. The same arguments apply to $\pi_2$. Finally $\cU$ is the union of two cycles of $\pi$ one from each $\pi_1\cup Kr_n(\pi_1)$ and $\pi_2\cup Kr_m(\pi_2)$. Since $a$ and $b$ are second order free then these cycles must be taken either from the partition or its Kreweras complement, this corresponds to the second and third sums in \eqref{Equation: Second order free cumulants of product of free variables}. 

Now let us consider $\pi\in S_{\NC}(2n,2m)$. Again $\pi= \pi_{\mathcal{O}}\cup \pi_{\mathcal{E}}$. Since $\pi\lor \gammanm=1_{2n+2m}$ there exist a cycle of $\pi$ (which must be a cycle of $\pi_{\mathcal{E}}$) that contains $2u\in [2n]$ and $2v\in [2m]$ and $\pi(2u)=2v$. A standard argument of non-crossing permutations shows that $\pi(2u,2v)\in \NC(\gammanm(2u,2v))$, where by $\NC(\gammanm(2u,2v))$ we mean non-crossing with respect to the permutation $\gammanm(2u,2v)$ which makes sense as $\gammanm(2u,2v)$ has a single cycle. Equivalently
$$\#(\bar{\pi})+\#(\bar{\pi}^{-1}\bar{\gamma})=2n+2m+1,$$
where $\bar{\pi}=\pi(2u,2v)$ and $\bar{\gamma}=\gammanm(2u,2v)$. Further, $\bar{\pi}^{-1}\bar{\gamma}$ separates even because $\pi^{-1}\gamma$ does. Note that $\bar{\pi}=\pi_{\mathcal{O}}\cup \bar{\pi_{\mathcal{E}}}$ where $\bar{\pi_{\mathcal{E}}}=\pi_{\mathcal{E}}(2u,2v)$. Again, we can use the results from the first order case to assert that $\bar{\pi_{\mathcal{E}}}$ is the Kreweras complement of $\pi_{\mathcal{O}}$. Equivalently $\bar{\pi_{\mathcal{E}}}=\pi_{\mathcal{O}}^{-1}\bar{\gamma}|_{\mathcal{O}}$, with the convention that the values of $\bar{\pi_{\mathcal{E}}}$ are relabeled from $\{2,4,\dots,2n+2m\}$ to $\{1,3,\dots,2n+2m-1\}$. Hence, we get that $\bar{\pi_{\mathcal{E}}}(2i)=\pi_{\mathcal{O}}^{-1}\bar{\gamma}|_{\mathcal{O}}(2i-1)+1$ for $i=1,\dots,n+m$, and then
$$(\pi_{\mathcal{E}}(2u,2v))(2i)=\pi_{\mathcal{O}}^{-1}\gamma(2u,2v)|_{\mathcal{O}}(2i-1)+1, \qquad \text{for } i=1,\dots,n+m$$
Notice that $\gamma(2u,2v)|_{\mathcal{O}}=\gamma|_{\mathcal{O}}(2u-1,2v-1)$, so
\begin{equation}\label{Aux: 1}
(\pi_{\mathcal{E}}(2u,2v))(2i)=(\pi_{\mathcal{O}}^{-1}\gamma|_{\mathcal{O}}(2u-1,2v-1))(2i-1)+1, \qquad \text{for } i=1,\dots,n+m.
\end{equation}
If $i=u,v$, this proves
$$\pi_{\mathcal{E}}(2v)=\pi_{\mathcal{O}}^{-1}\gamma|_{\mathcal{O}}(2v-1)+1,\qquad \text{and} \qquad \pi_{\mathcal{E}}(2u)=\pi_{\mathcal{O}}^{-1}\gamma|_{\mathcal{O}}(2u-1)+1.$$
If $i\neq u,v$, then $(\pi_{\mathcal{E}}(2u,2v))(2i)=\pi_{\mathcal{E}}(2i)$ and $(\pi_{\mathcal{O}}^{-1}\gamma|_{\mathcal{O}}(2u-1,2v-1))(2i-1)=\pi_{\mathcal{O}}^{-1}\gamma|_{\mathcal{O}}(2i-1)$. Using Equation \eqref{Aux: 1} we conclude,
$$\pi_{\mathcal{E}}(2i)=\pi_{\mathcal{O}}^{-1}\gamma|_{\mathcal{O}}(2i-1)+1.$$
In any case we obtain that
$$\pi_{\mathcal{E}}(2i)=\pi_{\mathcal{O}}^{-1}\gamma|_{\mathcal{O}}(2i-1)+1,\qquad \text{for } i=1,\dots,n+m,$$
which means $\pi_{\mathcal{E}}=Kr_{n,m}(\pi_{\mathcal{O}})$. The blocks of $\pi_{\mathcal{O}}$ correspond to cumulants of $a$ while the blocks of $\pi_{\mathcal{E}}$ correspond to cumulants of $b$, this concludes our proof.
\end{proof}

\subsection{A concrete formula for small \texorpdfstring{$n$}{n} and \texorpdfstring{$m$}{m}.}
\label{ssec:examples.product}

In order to exemplify how our formula works, we compute the formulas for $\freec{1,1}{ab},\freec{1,2}{ab}$ and $\freec{2,2}{ab}$ of two second order free variables $a,b$.

\subsubsection{Case \texorpdfstring{$n=m=1$}{n=m=1}}

Note that $S_{\NC}(1,1)$ and $\NC(1)$ have the unique permutations $\pi=(1,2)$ and $\pi=(1)$ respectively. In the former case $Kr_{1,1}(\pi)=(1,2)$, while in the latter $Kr(\pi)=(1)(2)$. Thus
$$\freec{1,1}{ab}=\freec{2}{a}\freec{2}{b}+\freec{1,1}{a}\freec{1}{b}\freec{1}{b}+\freec{1,1}{b}\freec{1}{a}\freec{1}{a}.$$

\subsubsection{Case \texorpdfstring{$n=2$, $m=1$}{n=2, m=1}}

First, $S_{\NC}(2,1)$ has the four permutations $(1,2,3),(1,3,2),(1,3)(2),(2,3)(1)$ with Kreweras complements $(2,3)(1),(1,3)(2),(1,2,3),(1,3,2)$ respectively. The set $\NC(2)$ has two permutations $(1,2)$ and $(1)(2)$. Thus

$$\freec{2,1}{ab}=\freec{1,2}{ab}=2\freec{3}{a}\freec{1}{b}\freec{2}{b}+2\freec{1}{a}\freec{2}{a}\freec{3}{b}+\freec{2,1}{a}\freec{1}{b}\freec{1}{b}\freec{1}{b}+2\freec{1}{a}\freec{2}{a}\freec{1}{b}\freec{1,1}{b}+\freec{2,1}{b}\freec{1}{a}\freec{1}{a}\freec{1}{a}+2\freec{1}{b}\freec{2}{b}\freec{1}{a}\freec{1,1}{a}.$$

\subsubsection{Case \texorpdfstring{$n=m=2$}{n=m=2}}

From the $24$ permutations in $S_{4}$ one can check that $18$ are in $S_{\NC}(2,2)$ except for the six permutations $(1)(2)(3)(4),(1)(2)(3,4),(1,2)(3)(4),(1,2)(3,4),(1,3,2,4),(1,4,2,3)$. With some effort, one can compute their Kreweras complement. Further, the set $\NC(2)$ has the two permutations $(1)(2),(1,2)$. Thus

\begin{eqnarray*}
\freec{2,2}{ab}&=&8\freec{1}{a}\freec{3}{a}\freec{1}{b}\freec{3}{b}+2(\freec{2}{a})^2(\freec{2}{b})^2+4(\freec{1}{a})^2\freec{2}{a}\freec{4}{b}+4(\freec{1}{b})^2\freec{2}{b}\freec{4}{a}+4\freec{1,1}{a}(\freec{1}{a})^2(\freec{2}{b})^2+\freec{2,2}{b}(\freec{1}{a})^4 \\
&+& 4\freec{1,1}{b}(\freec{1}{b})^2(\freec{2}{a})^2+\freec{2,2}{a}(\freec{1}{b})^4+4\freec{1,2}{a}\freec{1}{a}\freec{2}{b}(\freec{1}{b})^2+4\freec{1,2}{b}\freec{1}{b}\freec{2}{a}(\freec{1}{a})^2.
\end{eqnarray*}

\section{Application to second order free semicircular variables}\label{secc:examples}

As an application of our results, we will use them with one of the most important distributions in free probability, the semicircular distribution. Let us recall that in a non-commutative probability space, an element $a\in \AA$ in our algebra has \textit{semicircular distribution} if its first order cumulants are all $0$ except $\kappa_2(a)=1$. Further, we say that $a$ has \textit{second order semicircular distribution} if all its first and second order cumulants are $0$ except $\kappa_2=\kappa_{2,2}=1$. As a motivation, these variables appear naturally as the large $N$-limit of Wigner $N\times N$ random matrices, see for instance \cite{male2022joint,mingo2024asymptotic}.

\subsection{Anti-commutator}
\label{ssec:app.anticommutator}

Using two different approaches we will prove that if $a,b$ are two second order free semicircular elements, the second order cumulants of its anti-commutator have a simple expression. First a direct argument using the Kreweras complement and the permutation $I\pi$ studied in Proposition \ref{prop:I.pi}. In the second approach we identify the set of non-crossing annular pairings that contribute to the sum. These pairing actually have a very simple description.

\begin{proof}[Proof of Proposition \ref{Proposition: example of semicircles.intro}]

Applying Theorem \ref{Thm.anticommutator.main.2}, and using that all the cumulants of $a$ and $b$ vanish except for $\freec{2}{a}=\freec{2,2}{a}=\freec{2}{b}=\freec{2,2}{b}=1$, then we know a term in \eqref{formula.anticommutator.main.2.intro} 
will vanish unless all the elements of the product correspond to one of those four cumulants. In other words, the partition permutations must be a pairing, namely all blocks have size $2$. 

 Let us focus in the first sum in \eqref{formula.anticommutator.main.2.intro}.
The only terms surviving are when $\pi$ is a pairing. Moreover, each surviving term is just a product of ones, so contributes 1 to the sum. Thus, for the first sum we just need to count the number of pairings $\pi$ in $\JJ_{2n,2m}$. Since $\pi$ is a non-crossing annular pairing, then $|\pi|=n+m$ and $|Kr_{2n,2m}(\pi)|=2n+2m-n-m=n+m$. Since $\pi\in\JJ_{2n,2m}$ separates even, this means that each of the $m+n$ cycles of $Kr_{2n,2m}(\pi)$ must contain an even element. In particular, no cycle of $Kr_{2n,2m}(\pi)$ has only odd elements. By Proposition  
\ref{prop:I.pi}, this means that $I\pi$ 
$I\pi=C^{\text{out}}C^{\text{inn}}$ consists only of two cycles. Since both $I$ and $\pi$ are pairings, one can easily check that the two cycles determine each other in the following sense:
\begin{equation}
\label{eq:alternative.proof.aux}
C^{\text{out}}=(c_1c_2\dots c_{j}) \qquad \Rightarrow \qquad  C^{\text{inn}}=(I(c_{j})I(c_{j-1})\dots I(c_1)).
\end{equation}

Notice that since these are the only two cycles, this in particular means that both cycles of $I\pi$ have the same size $j=n+m$. Using part 1 of Proposition  
\ref{prop:I.pi}, we conclude that $n+m$ must be even. In other words, if $n+m$ is odd then there are no pairings $\pi$ in $\JJ_{2n,2m}$ so the first sum in \eqref{formula.anticommutator.main.2.intro}. Assuming that $n+m$ is even, we notice that parts 2 and 3 of Proposition  
\ref{prop:I.pi} provide a specific description of the two cycles. $C^{\text{out}}$ contains the elements $2,4,\dots, 2n$ in that order, so by \eqref{eq:alternative.proof.aux} we obtain that $C^{\text{inn}}$ must contain $2n-1,2n-3,\dots,1$ in that order. Similarly $C^{\text{inn}}$ contains $2n+2,2n+4,\dots, 2n+2m$ in order, so $C^{\text{out}}$ must contain $2n+2m-1,2n+2m-3,\dots,2n+1$ in that order.

Thus, to construct $C^{\text{out}}$ we have to interpolate the numbers $2,4,\dots, 2n$ (in that order) with the elements $2n+2m-1,2n+2m-3,\dots,2n+1$ (in that order). To count in how many ways this is possible, let us fix $2$ as the first number in the cycle.  For the remaining $n+m-1$ positions in the cycle we have $\binom{n+m-1}{m}$ ways to choose which $m$ positions are occupied by the odd numbers and which $n-1$ positions are occupied by the even numbers. The actual position of each even number is now fixed, as we started with 2 and they must be in increasing order. Finally we have $m$ ways to choose which is the first odd number appearing in the cycle, once this is settled, the remaining odd numbers should be in decreasing order. This means that there are $m\binom{n+m-1}{m}=\frac{(n+m-1)!}{(n-1)!(m-1)!}$ ways to construct $C^{\text{out}}$. By \eqref{eq:alternative.proof.aux}, once we construct $C^{\text{out}}$, then $C^{\text{inn}}$ is determined.

It is not hard to check that all permutations $ \sigma:=C^{\text{out}}C^{\text{inn}}$ created with the previous method, satisfy that the permutation $\pi:= I\sigma$ actually belongs to $\JJ_{2n,2m}$. The fact that $\pi^{-1} \gammanm$ separates even follows from construction. Also one can check that $\pi$ is a pairing, so all the vertices of $\GG_\pi$ have order 2. Since $\GG_\pi$ is also connected (because $\pi^{-1} \gammanm$ separates even) then $\GG_\pi$ must be a simple cycle of size $n+m$. Thus, it is bipartite when $n+m$ is even.

We conclude that the first sum is equal to 
\begin{equation}
\label{eq.auxi.pairing.1}
\sum_{\substack{\pi \in \JJ_{2n,2m}\\ \pi=\pi' \sqcup \pi''}}\left( \freec{\pi'}{a} \, \freec{\pi''}{b}  + \freec{\pi'}{b} \, \freec{\pi''}{a}  \right)=2\, \frac{(n+m-1)!}{(n-1)!(m-1)!}
\end{equation}
when $n+m$ is even and 0 otherwise.

To conclude the proof we analyze the second sum. For a term to contribute to the sum we require that $\pi =\pi'\sqcup \pi'' \sqcup A\sqcup B $ satisfies that $|A|=|B|=2$ and $\pi'$ and $\pi''$ are both pairings. So $\pi$ is a pairing. Moreover, in order for $\GG_\cU$ to be connected $\pi= \pi_1\times \pi_2\in \NC(2n)\times \NC(2m)$ must be of the form:
$\pi=(1,2n)(2,3)\cdots(2n-2,2n-1)(2n+1,2m)(2n+2,2n+3)\cdots (2m-2,2m-1).$
So $\GG_\pi$ are two simple cycles (one of size $n$ and one of size $m$), and $\GG_\cU$ is obtained by gluing together two vertices, one in each cycle. Thus, for the graph to be bipartite we require that $n$ and $m$ are even. If this is the case, then there are $nm$ ways to choose the two cycles $(A,B)\in \pi_1\times \pi_2=\pi$ that are glued together to form $\cU$. And again, each choice contribution is $2$. So the second sum is equal to
\begin{equation}
\label{eq.auxi.pairing.2}
\sum_{\substack{(\cU,\pi)\in \NCac_{2n,2m}\\ \pi =\pi'\sqcup \pi'' \sqcup A\sqcup B }} \left(\freec{|A|,|B|}{a}\, \freec{\pi'}{a} \, \freec{\pi''}{b} + \freec{|A|,|B|}{b}\, \freec{\pi'}{b} \, \freec{\pi''}{a}  \right)= 2nm  
\end{equation}
when $n$ and $m$ are even, and 0 otherwise.

Putting \eqref{eq.auxi.pairing.1} and \eqref{eq.auxi.pairing.2} together we obtain the desired formula.
\end{proof}

\begin{remark}
\label{rem.anticommutator.semicircular.general}
One can readily generalize the previous proof to allow the four (non-vanishing) cumulants be different from 1. In this case we get that if $a,b\in \AA$  are second order free semicircular variables, then the second order cumulants of their anti-commutator are
$$
\freec{n,m}{ab+ba}= \left\{ \begin{array}{lc} 2\frac{(n+m-1)!}{(n-1)!(m-1)!}(\freec{2}{a}\freec{2}{b})^{\frac{n+m}{2}}  & \text{if } n\text{ and }m\text{ are odd, }\\
2\frac{(n+m-1)!}{(n-1)!(m-1)!}(\freec{2}{a}\freec{2}{b})^{\frac{n+m}{2}} +nm(\freec{2}{a}\freec{2}{b})^{\frac{n+m-4}{2}} (\freec{2,2}{a}\left(\freec{2}{b})^2+\freec{2,2}{b}(\freec{2}{a})^2\right) &  \text{if }n\text{ and }m\text{ are even,}\\
0 &  \text{otherwise.} \end{array} \right.
$$
\end{remark}

We now provide an alternative approach where we identify precisely which non-crossing pairings are counted by the term $\frac{(n+m-1)!}{(n-1)!(m-1)!}$. In simple words, these are pairings with interval cycles of the form $(2s,2s+1)$ and through cycles of the form $(2s,2u)$ or $(2s+1,2u+1)$ such that given a through string the rest of through strings are determined. The non-crossing pairing obtained by removing the interval cycles is usually called a \textit{spoke diagram} and it emerges in other calculations such as the cumulants of the square of a second order semicircular variable \cite[Example 8.6]{arizmendi2023second} and the concept of second order real freeness \cite{redelmeier2014}. 

\begin{notation}
Let $\pi\in S_{\NC}(2n,2m)$ be a non-crossing pairing and recall that we use the notation $\gamma:=(1,\dots,2n)(2n+1,\dots,2n+2m)$. We say that $(u,v)\in \pi$ is parity preserving if $u \equiv v \text{ (mod }2)$. 
\begin{enumerate}
\item We let $\Ssemicircle(2n)$ to be the set of all non-crossing pairings $\pi\in S_{\NC}(2n,2n)$ with only parity preserving cycles, only through strings and such that if $(u,v)\in \pi$ then $(\gamma(u),\gamma^{-1}(v))\in\pi$. 
\item We let $\Ssemicircle(2n,2m)$ to be the set of all non-crossing pairings $\pi\in S_{\NC}(2n,2m)$ whose parity preserving cycles are the through strings, if $(u,v)\in\pi$ is not parity preserving then $u$ is even and $v=\gamma(u)$ and such that if $B$ is the union of all parity preserving cycles of $\pi$ then $\pi|_B \in \Ssemicircle(|B|/2)$.
\end{enumerate}
\end{notation}

\begin{remark}
It is clear $\Ssemicircle(2n)$ has $n$ distinct permutations. For instance, the two elements of $\Ssemicircle(4)$ are depicted in Figure \ref{Figure:SN permutations a}. On the other hand, the elements from $\Ssemicircle(2n,2m)$ can be easily obtained from elements of $\Ssemicircle(2k)$ for some $k\leq \min\{n,m\}$ by inserting pairings of the form $(2s,\gamma(2s))$ to a permutation in $\Ssemicircle(2k)$, see Figure \ref{Figure:SN permutations b} for an example. The permutations $\Ssemicircle(2n)$ are also referred as \textit{spoke diagrams} and appear naturally when computing the second order cumulants of $a^2$ with $a$ having semicircular distribution, e.g see \cite[Example 8.6]{arizmendi2023second}. 
\end{remark}

\begin{figure}[h!]
\begin{center}
\includegraphics[width=0.9\linewidth]{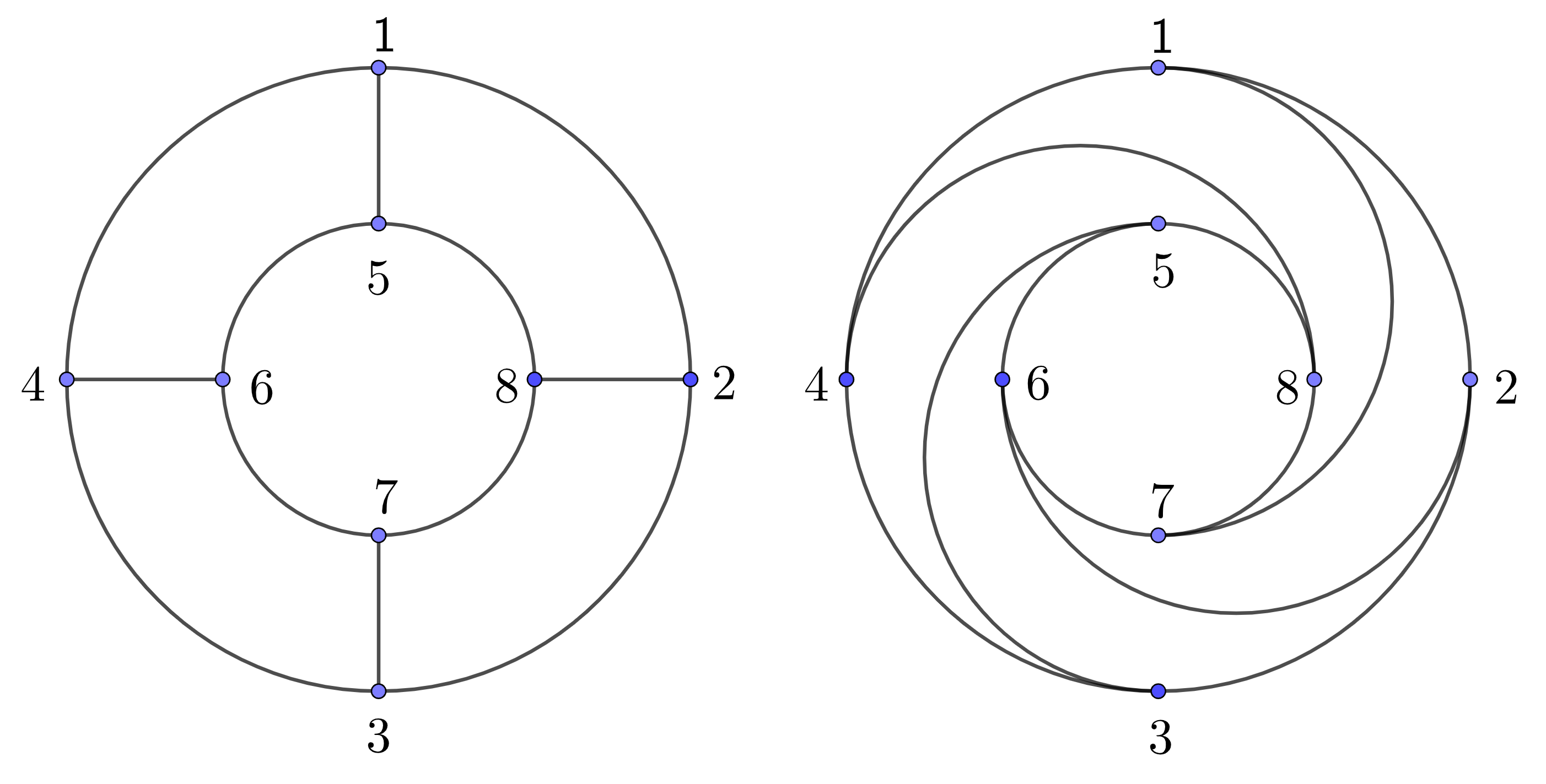} 
\caption{The only two permutations in $\Ssemicircle(4)$.}
    \label{Figure:SN permutations a}   
\end{center}
\end{figure}

\begin{figure}[h!]
\begin{center}
\includegraphics[width=0.9\linewidth]{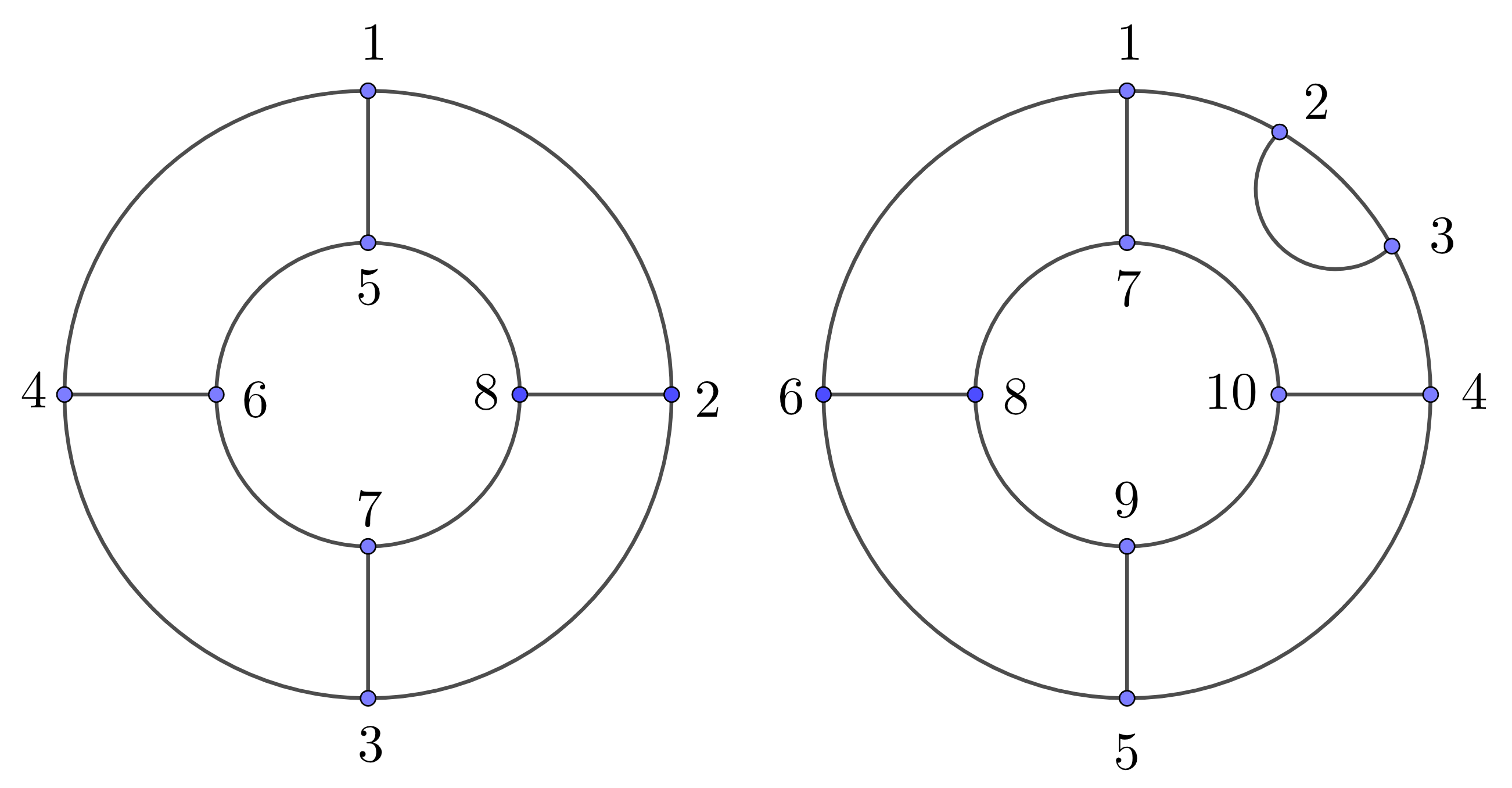}
\caption{Starting from a partition in $\Ssemicircle(4)$ we can construct a partition in $\Ssemicircle(6,4)$ by inserting the interval pairing  $(2,3)$.}
    \label{Figure:SN permutations b}   
\end{center}
\end{figure}

\begin{proposition}\label{Proposition: example of semicircles}
Let $(\AA,\varphi,\varphi^{2})$ be a second order non-commutative probability space and let $a,b\in \AA$ be two second order free semicircular variables. Then the second order cumulants of their anti-commutator are given by
$$
\freec{n,m}{ab+ba}= \left\{ \begin{array}{lc} 2|\Ssemicircle(2n,2m)| & \text{if } n\text{ and }m\text{ are odd }\\
2|\Ssemicircle(2n,2m)|+2nm &  \text{if }n\text{ and }m\text{ are even}\\
0 &  \text{otherwise} \end{array} \right.
$$
\end{proposition}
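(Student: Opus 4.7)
The plan is to invoke Proposition \ref{Proposition: example of semicircles.intro}, which already gives the same formula but with $\frac{(n+m-1)!}{(n-1)!(m-1)!}$ in place of $|\Ssemicircle(2n,2m)|$. The only new content is the combinatorial identity
\begin{equation*}
|\Ssemicircle(2n,2m)| = \frac{(n+m-1)!}{(n-1)!(m-1)!},
\end{equation*}
used precisely in the two cases where $n$ and $m$ share parity; in the remaining ``otherwise'' case the proposition simply yields $0$ and no further counting is needed.

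To prove the identity, I would stratify $\Ssemicircle(2n,2m)$ according to the number $t$ of parity-preserving through strings. Each such through string contributes one outer and one inner endpoint to the set $B$, so $|B|=2t$, and the remaining $2n-t$ outer (respectively $2m-t$ inner) endpoints must be covered by non-parity-preserving interval pairs of the form $(2j,\gamma(2j))$. This forces $t$ to be even; writing $t=2s$ with $s\in\{1,\dots,\min(n,m)\}$, the three independent choices that specify an element of $\Ssemicircle(2n,2m)$ are: $\binom{n}{s}$ selections of the $n-s$ outer interval pairs among the $n$ available candidates, $\binom{m}{s}$ analogous selections on the inner circle, and $|\Ssemicircle(2s)|=s$ spoke diagrams on the remaining $4s$ points after an order-preserving relabeling onto the $(2s,2s)$-annulus. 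Multiplying and summing gives $|\Ssemicircle(2n,2m)| = \sum_{s=1}^{\min(n,m)} s\binom{n}{s}\binom{m}{s}$. Using $s\binom{n}{s}=n\binom{n-1}{s-1}$ and then Vandermonde's convolution yields
\begin{equation*}
\sum_{s\geq 1} s\binom{n}{s}\binom{m}{s} = n\sum_{j\geq 0}\binom{n-1}{j}\binom{m}{j+1} = n\binom{n+m-1}{m-1} = \frac{(n+m-1)!}{(n-1)!(m-1)!},
\end{equation*}
as desired.

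The main obstacle is verifying that the stratification just described is in fact bijective, namely that the three choices produce and exhaust all elements of $\Ssemicircle(2n,2m)$. One must confirm that any admissible selection of outer and inner interval pairs, combined with any spoke diagram on the remaining points, yields a non-crossing annular pairing in the original $(2n,2m)$-annulus, and that the spoke condition from $\Ssemicircle(2s)$ after order-preserving relabeling agrees with the one induced by restricting the original $\gamma$ to the remaining points. The key observation is that each admissible interval pair $(2j,\gamma(2j))$ occupies two consecutive positions on its circle, so removing these pairs leaves the remaining points with an inherited cyclic structure fully compatible with the smaller $(2s,2s)$-annulus.
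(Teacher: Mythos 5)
Your argument is correct, but it runs in the opposite direction to the paper's. The paper proves this proposition directly from the anti-commutator formula \eqref{formula.anticommutator.main.2}: it shows that the annular pairings surviving in the first sum are precisely the elements of $\Ssemicircle(2n,2m)$ (each contributing $2$, with bipartiteness of $\GG_\pi$ forcing $n+m$ even), and that the second sum contributes $2nm$ exactly when $n$ and $m$ are both even; the closed form $\frac{(n+m-1)!}{(n-1)!(m-1)!}$ is obtained separately in Proposition \ref{Proposition: Counting semicircle set}, and Proposition \ref{Proposition: example of semicircles.intro} is then deduced from the two. You instead take Proposition \ref{Proposition: example of semicircles.intro} as given and reduce the present statement to the identity $|\Ssemicircle(2n,2m)|=\frac{(n+m-1)!}{(n-1)!(m-1)!}$; your stratification by the number $2s$ of through strings, the count $\sum_s s\binom{n}{s}\binom{m}{s}$, and the Vandermonde evaluation reproduce, essentially verbatim, the paper's proof of Proposition \ref{Proposition: Counting semicircle set}. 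This is logically sound only because the paper supplies an independent proof of Proposition \ref{Proposition: example of semicircles.intro} via the permutation $I\pi$ at the start of Section \ref{ssec:app.anticommutator}; absent that, your argument would be circular, since the paper's intended second derivation of Proposition \ref{Proposition: example of semicircles.intro} is exactly ``the present proposition plus the count.'' What your route gives up is the structural content: you establish an equality of cardinalities, whereas the paper's proof exhibits the contributing terms of the cumulant formula as being indexed by $\Ssemicircle(2n,2m)$ itself, which is the point of restating the result in this form. The bijectivity of your stratification, which you flag as the remaining obstacle, is treated in the paper at the same level of detail you sketch, so nothing further is needed there.
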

\begin{proof}
First of all note that all cycles of $\pi$ must be of size $2$. Let us start by assuming all cycles of $\pi$ are parity preserving, this will illustrate the methodology of our proof. It is clear there is no $\pi\in \NC(2n)\times \NC(2m)$ with only blocks of size $2$ and whose elements in the same block have the same parity. Thus the second sum in Equation (\ref{formula.anticommutator.main.2}) is empty so we are reduce to finding $\pi\in S_{\NC}(2n,2m)$ with $\pi\in \JJ_{2n,2m}$. Notice that our graph has $n+m$ edges and vertices, further, any block ${u,v}\in \pi$ that correspond to a vertex of $\GG_\pi$ has exactly two adjacent edges. Hence the graph $\GG_\pi$ is a simple cycle, i.e. a cycle where any vertex has two adjacent edges. Let us recall that our edges are $(1,2),(3,4),\dots, (2n+2m-1,2n+2m)$. In order to construct the graph $\GG_\pi$ we just need to connect the edges along its vertices, moreover the vertices must have the same parity. Since we require our graph to be bipartite we also require $n+m$ to be even. The previous construction guarantees $\GG_\pi$ is connected and bipartite, but we also need to verify $\GG_\pi \in \JJ_{2n,2m}$, i.e. $\pi^{-1}\gammanm$ separates even. Let $(2k,2s)$ be a cycle of $\pi$ then $\pi^{-1}\gamma$ has a cycle of the form
$$(\gamma^{-1}(2k),2s,\pi^{-1}(\gamma(2s)),\cdots).$$
If $\pi^{-1}(\gamma(2s)) \neq \gamma^{-1}(2k)$ then $\pi^{-1}(\gamma(\pi^{-1}(\gamma(2s))))$ is an even number distinct to $2s$ which means $\GG_\pi \notin \JJ_{2n,2m}$.
So it must be $\pi^{-1}(\gamma(2s))=\gamma^{-1}(2k)$, thus $(\gamma(2s),\gamma^{-1}(2k))$ is a cycle of $\pi$. This means that given $(2k,2s)$ a cycle of $\pi$ all its other cycles are determined and hence the graph $\GG_\pi$ is completely determined. Moreover $n=m$ and $\pi\in \Ssemicircle(2n)$. This counts all possible $\pi$ with only parity preserving cycles, but it might be possible that some of the cycles are not parity preserving. We will see that this case is reduced to the parity preserving case. Let us first assume $\pi\in S_{\NC}(2n,2m)$ which correspond to the first summand of Equation (\ref{formula.anticommutator.main.2}). The second summand will then be easily obtained from the same arguments. Suppose $\pi$ has a cycle of the form $(2k,2l+1)$, then $\pi^{-1}\gamma$ has a cycle of the form 
$$(\gamma^{-1}(2l+1),2k,\cdots).$$
Since we require $\GG_\pi\in \JJ_{2n,2m}$, it must be $2k=\gamma^{-1}(2l+1)$, otherwise $\pi^{-1}\gammanm$ does not separate even. This means the cycle of $\pi$ is of the form $(2k,\gamma(2k))$. Thus, all no parity preserving cycles of $\pi$ are of the form $(2k,\gamma(2k))$. Now let us consider a parity preserving cycle. Let $(2k,2s)\in\pi$, we proceed similarly as before so that $\pi^{-1}\gamma$ has a cycle of the form
$$(\gamma^{-1}(2k),2s,\pi^{-1}(\gamma(2s)),\cdots).$$
Since $\GG_\pi\in \mathcal{S}$ the only even number of this cycle must be $2s$, so $\delta=\pi^{-1}(\gamma(2s))$ must be odd and $\pi^{-1}(\gamma(\delta))$ must be odd as well. Note that $\gamma(\delta)$ is even so $(\pi(\gamma(\delta)),\gamma(\delta))$ is a non parity preserving cycle of $\pi$ which means $\pi(\gamma(\delta))=\gamma(\gamma(\delta))=\gamma^2(\delta)$. This means the cycle of $\pi^{-1}\gamma$ that contains $2s$ has the form
$$(\gamma^{-1}(2k),2s,\delta,\gamma^2(\delta),\gamma^4(\delta),\cdots).$$
This proves $\gamma^{-1}(2k)=\gamma^{2p}(\delta)$ for some $p$ and $\pi$ has cycles: $(\gamma(\delta),\gamma^2(\delta)),(\gamma^3(\delta),\gamma^4(\delta)),\dots, \ab (\gamma^{2p-1}(\delta),\ab \gamma^{2p}(\delta))=(\gamma^{-2}(2k),\gamma^{-1}(2k))$ and $(\delta,\gamma(2s))$. Further, it is clear $(2k,2s)$ and $(\delta,\gammanm(2s))$ must be through strings otherwise $\pi$ has no through strings, see for instance figure \ref{Figure:Counting semicircle}. This proves $\pi\in \Ssemicircle(2n,2m)$. 

\begin{figure}[h!]
\begin{center}
\includegraphics[width=0.8\linewidth]{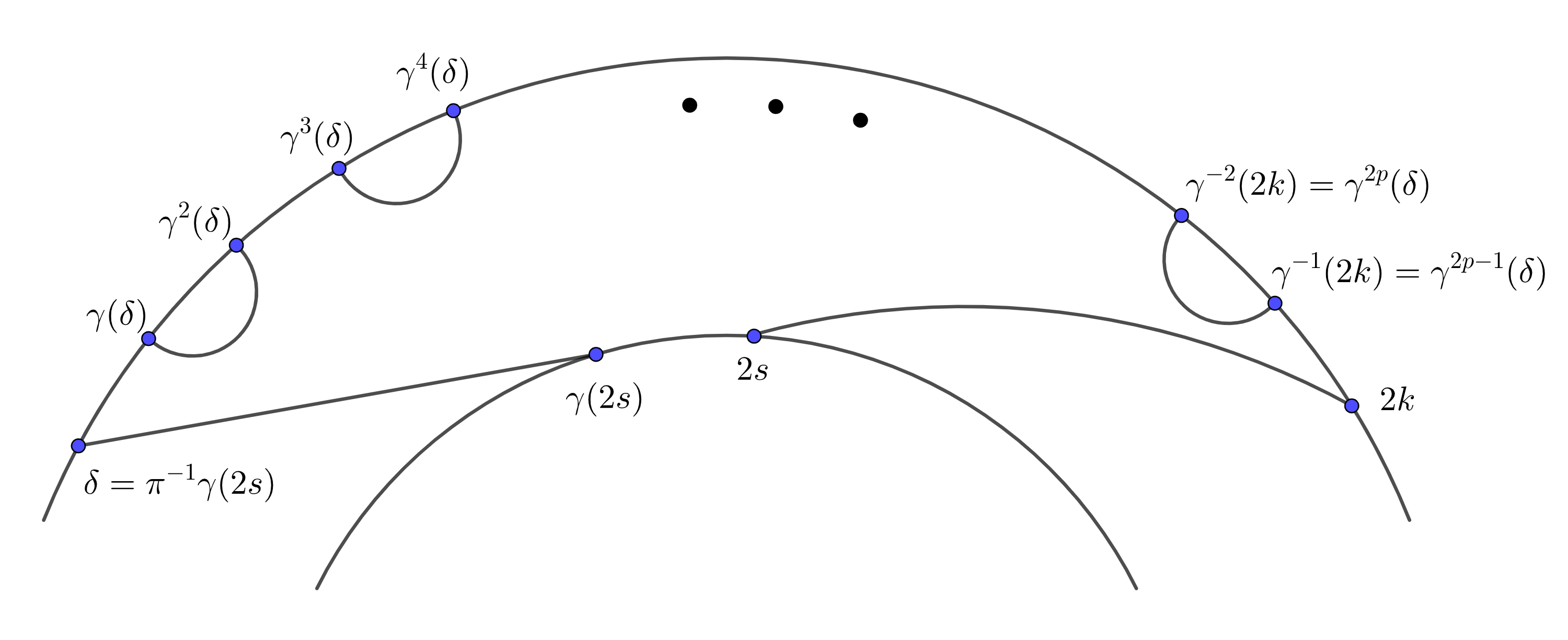}
\caption{}
    \label{Figure:Counting semicircle}   
\end{center}
\end{figure}

It remains to check the second term of Equation (\ref{formula.anticommutator.main.2}). That is $\pi=\pi_1\times \pi_2\in \NC(2n)\times \NC(2m)$. First of all, notice that $n$ and $m$ must be even, otherwise there is no bipartite graphs $\GG_{\pi_1},\GG_{\pi_2}$. Assume $n$ is even and it might have non parity preserving cycles. The same argument shows these cycles are of the form $(2k,\gamma(2k))$. Moreover the same argument as before shows that if $(2k,2s)$ is a cycle of $\pi$ then the cycle of $\pi^{-1}\gamma$ that contains $2s$ has the form
$$(\gamma^{-1}(2k),2s,\delta,\gamma^2(\delta),\gamma^4(\delta),\cdots).$$
Again, this proves $\gamma^{-1}(2k)=\gamma^{2p}(\delta)$ for some $p$ and $(\gamma(\delta),\gamma^2(\delta)),(\gamma^3(\delta),\gamma^4(\delta)),\dots, \ab (\gamma^{2p-1}(\delta),\gamma^{2p}(\delta))=(\gamma^{-2}(2k),\gamma^{-1}(2k))$ are all cycles of $\pi$. Note that there is an even number of elements in the set $\{2k+1,\dots,2s-1\}$, so $\pi$ has necessarily a crossing which means $\pi\notin \NC(2n)\times \NC(2m)$. The latest proves $\pi$ cannot have parity preserving cycles, so all its cycles must be non-parity-preserving which yields a unique choice of $\pi$, namely,
\begin{multline*}
\pi=(2,3)(4,5)\cdots (2n-2,2n-1)(1,2n) \\(2n+2,2n+3)\cdots (2n+2m-2,2n+2m-1)(2n+1,2m).
\end{multline*}
To conclude it is easy to observe that if both $n$ and $m$ are odd then the first term in Equation \eqref{formula.anticommutator.main.2} runs over all set of non-crossing pairings $\Ssemicircle(2n,2m)$ and for each pairing the contribution is $2$ as $\kappa_2(a)=\kappa_2(b)=1$. If both $n$ and $m$ are even then the first term remains as before but the second term is not empty and there is exactly one permutation $\pi=\pi_1\times\pi_2$. The choice for the cycles $(C_1,C_2)\in \pi_1\times \pi_2$ is $nm$ and for each choice the contribution is $2$ as $\kappa_{2,2}(a)=\kappa_{2,2}(b)=\kappa_2(a)=\kappa_2(b)=1$. Finally, if $n+m$ is not even then both sums are empty.
\end{proof}

To finish with this example let us show that the set $\Ssemicircle(2n,2m)$ can explicitly be counted.

\begin{proposition}\label{Proposition: Counting semicircle set}
For any $n,m\geq 1$,
$$|\Ssemicircle(2n,2m)|=\frac{(n+m-1)!}{(n-1)!(m-1)!}.$$
\end{proposition}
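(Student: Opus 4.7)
I would prove Proposition \ref{Proposition: Counting semicircle set} by stratifying $\Ssemicircle(2n,2m)$ according to the number $2k$ of its through-strings and reducing the count to a Vandermonde-type convolution.

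First, from the definition of $\Ssemicircle(2n,2m)$, every element $\pi$ decomposes uniquely into three pieces of data: (i) a selection of $n-k$ non-parity-preserving interval pairs on the outer annulus, each of the form $(2s,\gamma(2s))$ and drawn from the $n$ consecutive-pair slots $\{2,3\},\{4,5\},\ldots,\{2n-2,2n-1\},\{2n,1\}$; (ii) the analogous selection of $m-k$ inner interval pairs from the $m$ candidate slots $\{2n+2,2n+3\},\ldots,\{2n+2m,2n+1\}$; and (iii) a parity-preserving spoke diagram $\pi|_B \in \Ssemicircle(2k)$ on the remaining $4k$ points, where $1\le k\le \min(n,m)$ (note that $k\ge 1$ is required, since at least one through-string is needed for $\pi\vee\gammanm=1_{n+m}$).

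Second, I would verify that each such triple of data yields a valid and distinct element of $\Ssemicircle(2n,2m)$, and conversely. Using $|\Ssemicircle(2k)|=k$ (noted in the remark after its definition), this gives
\[
|\Ssemicircle(2n,2m)|=\sum_{k=1}^{\min(n,m)}\binom{n}{n-k}\binom{m}{m-k}\,|\Ssemicircle(2k)|=\sum_{k=1}^{\min(n,m)}k\binom{n}{k}\binom{m}{k}.
\]

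Third, I would simplify this sum via the identity $k\binom{n}{k}=n\binom{n-1}{k-1}$ and Vandermonde's convolution:
\begin{align*}
\sum_{k=1}^{\min(n,m)} k\binom{n}{k}\binom{m}{k}
&= n\sum_{k=1}^{\min(n,m)}\binom{n-1}{k-1}\binom{m}{k} \\
&= n\sum_{l\ge 1}\binom{n-1}{l-1}\binom{m}{m-l} = n\binom{n+m-1}{m-1} = \frac{(n+m-1)!}{(n-1)!(m-1)!}.
\end{align*}

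The main obstacle will be the second step: verifying that the correspondence between elements of $\Ssemicircle(2n,2m)$ and the described triples of data is truly bijective. The key technical point is that removing the interval pairs (which are non-crossing local bumps, each consisting of one odd and one even consecutive element) preserves both the cyclic order and the parity-alternation pattern on the remaining points on each annulus. Consequently, the $2k$ remaining outer points and $2k$ remaining inner points canonically inherit the standard labeling structure of $\Ssemicircle(2k)$, and the defining condition $(u,v)\in\pi|_B\Rightarrow(\gamma_B(u),\gamma_B^{-1}(v))\in\pi|_B$ transfers consistently under this relabeling, where $\gamma_B$ denotes the cyclic permutation of the remaining points induced by $\gammanm$. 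Once this compatibility is established, the counting and algebra proceed as above.
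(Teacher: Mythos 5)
Your proposal is correct and follows essentially the same route as the paper: both stratify by the number $2k$ of through-strings, obtain the count $\sum_{k\ge 1}k\binom{n}{k}\binom{m}{k}$ (the paper counts the $k$ matchings of the chosen through-string slots directly, you factor through $|\Ssemicircle(2k)|=k$, which is the same thing), and finish with a Vandermonde convolution. The only cosmetic difference is that you extract the factor $n$ to get $n\binom{n+m-1}{m-1}$ while the paper extracts $m$ to get $m\binom{n+m-1}{n-1}$; both equal $\frac{(n+m-1)!}{(n-1)!(m-1)!}$.
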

\begin{proof}
Set $\gamma=\gammanm$. Let $\pi$ be the permutation with cycles $(2s,\gamma(2s))$ for any $1\leq s\leq n+m$. Any permutation $\sigma$ in $\Ssemicircle(2n,2m)$ can be obtained from $\pi$ by turning some of its cycles into through strings. Note that if $(2s,2u)$ is a through string of $\sigma$ then $\gamma(2s)$ and $\gamma(2u)$ must lie in a through string. This means that we are allowed to turn any cycle of $\pi$ into through strings to get $\sigma$. In the outer circle we have as many as $n$ cycles of $\pi$ while in the inner circle we have $m$. So if $\sigma$ has $2k$ through strings, the number of ways to choose the cycles of $\pi$ is to make them into through strings of $\sigma$ is $\binom{n}{k}\binom{m}{k}$. Once we have a choice we just need to pair any two elements (in distinct circles and with the same parity) to get a through string. By definition of $\Ssemicircle(2n,2m)$ the rest of through strings are determined. This can be done in $k\binom{n}{k}\binom{m}{k}$ distinct ways. Let us assume $n\leq m$, hence
\begin{eqnarray*}
|\Ssemicircle(2n,2m)| &=& \sum_{k=1}^{n}k\binom{n}{k}\binom{m}{k}=\sum_{k=0}^{n-1} (k+1)\binom{n}{k+1}\binom{m}{k+1} \\
&=& m\sum_{k=0}^{n-1} \binom{n}{k+1}\binom{m-1}{k}= m\sum_{k=0}^{n-1} \binom{n}{n-1-k}\binom{m-1}{k}
\end{eqnarray*}
The sum $\sum_{k=0}^{n-1} \binom{n}{n-1-k}\binom{m-1}{k}$ counts the number of ways of choosing $n-1$ objects from a total of $n+m-1$ objects composed of two groups having $n$ and $m-1$ objects of the first and second group respectively. Therefore the sum in the last equality is $\binom{n+m-1}{n-1}$, hence $|\Ssemicircle(2n,2m)|=m\binom{n+m-1}{n-1}=\frac{(n+m-1)!}{(n-1)!(m-1)!}$ as desired. The case $m\leq n$ follows exactly the same.
\end{proof}

Proposition \ref{Proposition: example of semicircles.intro} follows from combining Propositions \ref{Proposition: example of semicircles} and \ref{Proposition: Counting semicircle set}.

\begin{remark}
Unlike the first order case in which any connected graph $\GG_\pi$ contributes, in the second order case Proposition \ref{Proposition: example of semicircles} provides an explicit example in which we observe that a massive set of connected graphs $\GG_\pi$ are not in the sum. For instance, it is very simple to observe that the set of connected and bipartite graphs is counted by $2^{n+m-1}(n+m-1)!$ if $n+m$ is even. This follows directly from noticing that our graphs are the result of gluing the edges through its vertices such that the resulting graph is a simple cycle. However, only a few $\pi$ of these graphs $\GG_\pi$, are further in the set $\Ssemicircle(2n,2m)$. It is evident that the cardinality of this set is much smaller than $2^{n+m-1}(n+m-1)!$.

\end{remark}

\subsection{Commutator}

We now want to compute the cumulants of the commutator of two second order semicircular elements, $a,b$. We will use the permutation $I\pi$ and follow the proof of Proposition \ref{Proposition: example of semicircles.intro} from Section \ref{ssec:app.anticommutator}.

\begin{proof}[Proof of Proposition \ref{prop.commutator.semicircular.intro}]
Recall that formulas for the commutator and anti-commutator have the same terms, what changes is the sign. Thus, we can follow the proof of Proposition \ref{Proposition: example of semicircles.intro} to check that $ I\pi=C^{\text{out}}C^{\text{inn}}$ and $C^{\text{out}}$ must interpolate the numbers $2,4,\dots, 2n$ (in that order) with the elements $2n+2m-1,2n+2m-3,\dots,2n+1$ (in that order). Moreover, there are $\frac{(n+m-1)!}{(n-1)!(m-1)!}$ ways to construct $C^{\text{out}}$, and given that cycle,  $C^{\text{inn}}$ is determined. Also, in order for $\GG_\pi$ to be bipartate we must have that $n+m$ is even. Now, we need to check the sign $s(\pi)$ associated to each permutation constructed in this way. Turns out that the sign is always the same. Indeed, recall from the prof of Proposition \ref{prop:I.pi}, that if the natural bipartite decomposition is $\pi = \pi ' \sqcup \pi ''$, and we denote $A'=\cup\pi'$ and $A''=\cup\pi''$, then
\begin{equation}
I \pi(A') \subset A'', \qquad \text{and} \qquad I \pi(A'')\subset A'.
\end{equation}
In particular, since $2\in A''$ by assumption. The elements of $C^{\text{out}}$ at an odd position in the cycle (starting with 2) are in $A''$ while the elements at an even position are in $A'$. Similarly, the elements of $C^{\text{inn}}$ at an odd position in the cycle (starting with 1) are in $A''$ while the elements at an even position are in $A''$. Let $t:=|\{ k\in A'\cap C^{\text{out}} :\, k \text{ is even}\}|$, since $C^{\text{out}}$ is determined by $C^{\text{inn}}$ it is not hard to check that 
$$|\{ k\in A'\cap C^{\text{inn}} :\, k \text{ is even}\}|=|\{ k\in A''\cap C^{\text{out}} :\, k \text{ is odd}\}|$$
$$=m+n-|\{ k\in A''\cap C^{\text{out}} :\, k \text{ is even}\}|=m+n-(n-t)=m+t.$$
By \eqref{not:sign.pi}, we conclude that the sign
$$
s(\pi)=(-1)^{|\{ k\in A':\, k \text{ is even}\}|}=(-1)^{t+m+t}=(-1)^m.
$$
Notice that it does not depend of $\pi$, moreover, since $m+n$ is even, then $(-1)^{m+n}=1$. Using Theorem \ref{Thm.commutator.main} and the previous analysis we conclude that the first sum is equal to 
\begin{equation}
\label{eq.auxi.pairing.1.comm}
\sum_{\substack{\pi \in \JJ_{2n,2m}\\ \pi=\pi' \sqcup \pi''}} s(\pi)\left( \freec{\pi'}{a} \, \freec{\pi''}{b}  +(-1)^{m+n} \freec{\pi'}{b} \, \freec{\pi''}{a}  \right)=2(-1)^m \, \frac{(n+m-1)!}{(n-1)!(m-1)!}
\end{equation}
when $n+m$ is even and 0 otherwise.

To analyze the second sum, we can also follow the proof of Proposition \ref{Proposition: example of semicircles.intro} to check that to be non-vanishing we require that both $n$ and $m$ are even and $\pi$ must be of the form $\pi=(1,2n)(2,3)\cdots(2n-2,2n-1)(2n+1,2m)(2n+2,2n+3)\cdots (2m-2,2m-1).$ Then one can easily check that $s(\pi)=(-1)^{\frac{m+n}{2}}=1$. Since there are $nm$ permutations in the sum, we conclude that the second sum is equal to
\begin{equation}
\label{eq.auxi.pairing.2.comm}
\sum_{\substack{(\cU,\pi)\in \NCac_{2n,2m}\\ \pi =\pi'\sqcup \pi'' \sqcup A\sqcup B }} s(\pi) \left(\freec{|A|,|B|}{a}\, \freec{\pi'}{a} \, \freec{\pi''}{b} + (-1)^{m+n}\freec{|A|,|B|}{b}\, \freec{\pi'}{b} \, \freec{\pi''}{a}  \right)= 2nm(-1)^{\frac{m+n}{2}}  
\end{equation}
when $n$ and $m$ are even, and 0 otherwise.

Putting \eqref{eq.auxi.pairing.1.comm} and \eqref{eq.auxi.pairing.2.comm} together we obtain the desired formula.
\end{proof}

\begin{remark}
Same as with the anti-commutator, one can readily generalize the previous proof to allow the four (non-vanishing) cumulants be different from 1. We obtain
$$
\freec{n,m}{ab-ba}= \left\{ \begin{array}{lc} -2\frac{(n+m-1)!}{(n-1)!(m-1)!}(\freec{2}{a}\freec{2}{b})^{\frac{n+m}{2}}  & \text{if } n\text{ and }m\text{ are odd, }\\
2\frac{(n+m-1)!}{(n-1)!(m-1)!}(\freec{2}{a}\freec{2}{b})^{\frac{n+m}{2}} & \\
\hspace{.5cm}+(-1)^{\frac{n+m}{2}}nm(\freec{2}{a}\freec{2}{b})^{\frac{n+m-4}{2}} (\freec{2,2}{a}\left(\freec{2}{b})^2+\freec{2,2}{b}(\freec{2}{a})^2\right) &  \text{if }n\text{ and }m\text{ are even,}\\
0 &  \text{otherwise.} \end{array} \right.
$$
\end{remark}

\subsection{Product} 

Let us recall that we are considering free semicircle variables $a,b$, which means free variables with cumulants all $0$ except for $\freec{2}{a},\freec{2}{b},\freec{2,2}{a}$, and $\freec{2,2}{b}$.

\begin{proof}[Proof of Proposition \ref{Proposition: example of semicircles.intro.v2.product}]
First of all note that second and third sums of Theorem \ref{Thm: Product of second order free variables} vanish as any non-crossing pairing $\pi\in \NC(n)\times \NC(m)$ must be such that $Kr(\pi)$ has a block of size $1$. So we are reduce to consider the first sum of Theorem \ref{Thm: Product of second order free variables}. Let $\pi\in S_{\NC}(n,m)$ and suppose $\pi$ has a non-through string $(u,v)$ with $u<v$ and such that both $u,v\in [2n]$. Observe that if $\pi$ has two through strings $(a_1,b_1)$ and $(a_2,b_2)$ with $b_1,b_2\in [2m]$ and $a_1\in \{u+1,\dots,v-1\}$ and $a_2\in [2n]\setminus\{u+1,\dots,v-1\}$ then $\pi$ satisfies the crossing condition $AC-3$ as defined in \cite[Definition 3.5]{mingo2004annular}. The latest means that $\pi$ has no through strings in either $\{u+1,\dots,v-1\}$ or $[2n]\setminus \{u+1,\dots,v-1\}$. Assume without loss of generality it has no through strings in $\{u+1,\dots,v-1\}$. Then $\pi|_{\{u+1,\dots,v-1\}}\in \NC_2(\{u+1,\dots,v-1\})$ and therefore $Kr_{n,m}(\pi)$ has a block of size $1$ in $\{u+1,\dots,v-1\}$ which leads to vanishing in the first sum of Theorem \ref{Thm: Product of second order free variables}. We conclude $\pi$ has only through strings. Since $a,b$ are both semicircular it means both $\pi$ and $Kr_{n,m}(\pi)$ has only cycles of size $2$. From the topological interpretation of $Kr_{n,m}(\pi)$ it is clear the unique permutations that satisfy such conditions are $\NC_2^{spoke}(n)$. Hence $n=m$ and there are exactly $n$ permutations $\pi$ for which each contribution is $\freec{2}{a}\freec{2}{b}$. 
\end{proof}

\section*{Acknowledgments}

The first author was supported by Hong Kong GRF 16304724 and NSFC 12222121. Most of the work was carried out while the second author was affiliated with Texas A\&M University. We thank the organizers of IWOTA 2024, where this project originated. Specially, the second author expresses its gratitude to the NSF travel grant that made it possible to attend IWOTA. We also thank Octavio Arizmendi and James Mingo for their valuable comments.

\end{document}